\newtheorem{Definition}{Definition}[section]
\newtheorem{theorem}[Definition]{Theorem}
\newtheorem{Corollary}[Definition]{Corollary}
\newtheorem{Lemma}[Definition]{Lemma}
\newtheorem{Notation}[Definition]{Notation}
\newtheorem{Proposition}[Definition]{Proposition}
\newtheorem{Remark}[Definition]{Remark}
\newtheorem*{theorem*}{Theorem}
\newtheorem{Question}[Definition]{Question}
\newtheorem{Conjecture}[Definition]{Conjecture}
\def\gc{{\gamma}}
\def\dgc{\dot{\gc}}
\newcommand {\mm}[1] {\ifmmode{#1}\else{\mbox{\(#1\)}}\fi}
\newcommand{\C}{\mathbb{C}} 
\newcommand{\cp}{\mathbb{C}\mathrm{P}} 
\newcommand{\rp}{\mathbb{R}\mathrm{P}} 
\newcommand{\R}{\mathbb{R}} 
\newcommand{\Z}{\mathbb{Z}} 
\newcommand{\N}{\mathbb{N}} 
\newcommand{\hz}{\mathrm{HZ}}
\newcommand{\dd}{\mathrm{d}}
\begin{document}

\title{Magnetic billiards and the Hofer--Zehnder capacity of disk tangent bundles of lens spaces}
 
\author{{Johanna Bimmermann\footnote{Fakultät für Mathematik, Ruhr-Universität Bochum, Universitätsstraße 150, 44801 Bochum, Germany, johanna.bimmermann@rub.de}\ \& Levin Maier\footnote{Mathematisches Institut, Ruprecht-Karls-Universität Heidelberg, Im Neuenheimer Feld 205, 69120 Heidelberg, Germany, lmaier@mathi.uni-heidelberg.de} }}

\maketitle

\begin{abstract}
\noindent
We compute the Hofer--Zehnder capacity of disk tangent bundles of certain lens spaces with respect to the round metric. Interestingly, we find that the Hofer--Zehnder capacity does not see the covering, i.e.\ the capacity of the disk tangent bundle of the lens space coincides with the capacity of the disk tangent bundle of the 3-sphere covering it. In particular, this gives a first example, where Gromov width and Hofer--Zehnder capacity of a disk tangent bundle disagree. We use techniques that include magnetic billiards for the lower bound and Gromov--Witten invariants for the upper bound.
\end{abstract}

\section{Introduction}
Since Gromov's famous non-squeezing theorem, symplectic embedding problems lie at the heart of symplectic geometry. To obtain obstructions for symplectic embeddings, symplectic capacities were introduced as numerical invariants of symplectic manifolds. A normalized symplectic capacity is a function $c$ that assigns a number $c(M,\omega)\in[0,\infty]$ to a symplectic manifold $(M^{2n},\omega)$ of fixed dimension $2n$, such that the following axioms hold \begin{enumerate}
    \item \textit{(Monotonicity)} if $(M,\omega)$ symplectically embeds into $(\Tilde{M}, \Tilde{\omega})$ then $c\left(M, \omega \right)\leq c\left(\Tilde{M}, \Tilde{\omega} \right)$,
    \item \textit{(Conformality)} for all $r>0$, we have $c\left(M,r\omega \right)= r c\left(M,\omega \right)$,
    \item \textit{(Normalization)} $c\left(B^{2n}(1), \omega_0\right)=\pi= c\left( Z, \omega_0\right)$.
\end{enumerate}
 Here, $\left(B^{2n}(1),\omega_0 \right)$ denotes a ball of radius $1$ in $\R^{2n}$, and $\left( Z= B^2\times \R^{2n-2},\omega_0\right) $ denotes a
 the cylinder of radius $1$ in $\R^{2n}$, where each is equipped with the standard symplectic structure $\omega_0=\sum_{j=1}^n \dd x_i\wedge \dd y_j$ . Before moving on we would like to mention that normalized symplectic capacities are intrinsically difficult to compute since the existence of a normalized symplectic capacity automatically leads to a proof of Gromov's non squeezing theorem.  Nowadays there are several normalized symplectic capacities, c.f.\ \cite{CHLS}, so it is quite natural to ask on which class of domains which symplectic capacities agree?   
This question is closely related to two of the biggest unsolved problems of symplectic geometry:
the \emph{Viterbo conjecture} and the \emph{strong Viterbo conjecture}. 
\begin{Conjecture}[Strong Viterbo conjecture]\label{Strong Viterbor conj}
    Let $X\subseteq\R^{2n}$ be a convex domain. Then all normalized symplectic capacities agree on $X$.
\end{Conjecture}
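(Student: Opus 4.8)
The plan is to exploit the extremal position of the Gromov width and the cylindrical capacity among all normalized capacities. By the Monotonicity and Normalization axioms, every normalized symplectic capacity $c$ satisfies $c_B(X)\le c(X)\le c_Z(X)$ on any domain $X$, where $c_B$ denotes the Gromov width (the supremal capacity of a symplectically embedded ball) and $c_Z$ the cylindrical capacity (the infimal capacity of a cylinder into which $X$ symplectically embeds); these are respectively the smallest and the largest normalized capacity. Consequently the strong Viterbo conjecture reduces to the single equality $c_B(X)=c_Z(X)$ for every convex domain $X\subseteq\R^{2n}$. First I would reduce to smooth, strictly convex $X$ by an approximation and continuity argument, so that $\partial X$ becomes a contact-type hypersurface carrying a well-defined Reeb flow.

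Next I would pass to the dynamical description of capacities on convex bodies. For smooth strictly convex $X$ the Ekeland--Hofer--Zehnder capacity $c_{EHZ}(X)$ equals the minimal action $A_{\min}(\partial X)$ of a closed characteristic on the boundary, and since $c_{EHZ}$ is itself a normalized capacity it too is pinched, $c_B(X)\le A_{\min}(\partial X)\le c_Z(X)$. The strategy is therefore to prove the two matching inequalities $c_Z(X)\le A_{\min}(\partial X)$, a rigidity or obstruction statement, and $A_{\min}(\partial X)\le c_B(X)$, a flexibility or embedding statement. For the lower embedding bound I would attempt to use the action-minimizing closed characteristic together with a normal-form theorem in a neighbourhood of its orbit to construct an explicit symplectic ball of the required capacity; for the upper obstruction bound I would combine the monotonicity of ECH-type or Gutt--Hutchings capacities with the fact that their first value is controlled by $A_{\min}$.

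The hard part, and the reason this remains one of the central open problems of the field, will be closing the gap between these flexible and rigid bounds for a completely general convex body. Current embedding constructions give sharp lower bounds only for highly symmetric domains such as balls, ellipsoids, polydisks, and certain Lagrangian products, while the obstruction side is controlled only when the Reeb dynamics on $\partial X$ is sufficiently well understood; no existing technique bridges the two for an arbitrary convex $X$. I would therefore expect genuine progress only under additional hypotheses, for instance $S^1$-invariance or a toric and monotone structure, where equivariant and combinatorial methods can in principle compute all relevant capacities simultaneously. For the unconstrained convex case I regard the sandwiching reduction above as only a reformulation, and I expect that an essentially new idea, rather than a refinement of these bounds, will be required.
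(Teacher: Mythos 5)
The statement you were asked about is not a theorem of the paper at all: it is stated as \Cref{Strong Viterbor conj}, an open conjecture, and the paper contains no proof of it. The paper only records the known partial results --- validity in a $C^2$-neighborhood of the ball in $\R^{2n}$ \cite{ABE24}, in a $C^3$-neighborhood of the ball in $\R^4$ \cite{E23di}, and for monotone toric domains \cite{CGH23} --- and in fact the main content of the paper (\Cref{thm} and its corollary) is that the \emph{analog} of this conjecture fails for disk tangent bundles, where Gromov width and Hofer--Zehnder capacity disagree on $D_1L(p;1)$. So there is no proof in the paper to compare yours against, and any submission presenting a complete proof of the statement would have to be judged on its own; yours, to your credit, does not claim to be one.

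Judged as an argument, your text is a correct reformulation plus an accurate survey of obstacles, but it contains no proof mechanism. The sandwich $c_B(X)\le c(X)\le c_Z(X)$ for every normalized capacity is standard and correct, as is $c_{EHZ}(X)=A_{\min}(\partial X)$ for smooth strictly convex bodies, so reducing the conjecture to the two inequalities $c_Z(X)\le A_{\min}(\partial X)$ and $A_{\min}(\partial X)\le c_B(X)$ is fine --- but those two inequalities \emph{are} the open problem, and the tools you propose do not deliver either one. On the flexible side, a normal form in a neighborhood of an action-minimizing closed characteristic produces a symplectic ball whose capacity is controlled by the size of the tubular neighborhood, not by the action $A_{\min}(\partial X)$; constructing a ball of full capacity $A_{\min}$ is precisely what is known only for highly symmetric domains. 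On the rigid side, monotonicity of Gutt--Hutchings-type capacities together with $c_1^{GH}(X)=c_{EHZ}(X)$ on convex domains yields $A_{\min}(\partial X)\le c_Z(X)$, which is the \emph{wrong} direction: nothing in that formalism bounds the cylindrical capacity from above by the minimal action. Since you concede in your final paragraph that the sandwiching is only a reformulation and that a genuinely new idea is needed, your assessment matches the status of the statement in the paper: it is an open conjecture, and no proof attempt along these lines closes the gap.
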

\noindent
Recent work of Abbondandolo--Benedetti--Edtmair \cite{ABE24} proves that the strong Viterbo conjecture holds true in a $C^2$-neighborhood of the ball in $\R^{2n}$. This extends earlier work of Edtmair \cite{E23di} about the strong Viterbo conjecture in a $C^3$-neighborhood of the ball in $\R^{4}$. Let us also mention the recent work of Cristofaro--Gardiner--Hind \cite{CGH23} that provide a positve answer to the strong Viterbo conjeture for all monotone
toric domains of any dimension. For a general overview we refer to Gutt--Hutchings--Ramos \cite{GHR23}.  Furthermore note that the strong Viterbo conjecture implies: 
\begin{Conjecture}[Viterbo conjecture]\label{Viterbo conj}
  Let $X\subseteq \R^{2n}$ be a convex domain. Then any normalized symplectic capacity $c$ satisfies the inequality \[
  c(X)\leq \left(n! \ \mathrm{vol}(X) \right)^{\frac{1}{n}}.
  \]  
\end{Conjecture}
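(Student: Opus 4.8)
The plan is to derive the inequality from the strong Viterbo conjecture (Conjecture~\ref{Strong Viterbor conj}), which is precisely the implication flagged in the sentence introducing the statement. The guiding observation is that the right-hand side $\left(n!\,\mathrm{vol}(X)\right)^{1/n}$ is, up to the normalization constant, the value on $X$ of a volume-built functional, and that this functional dominates the Gromov width on \emph{every} domain, convex or not. Since the Gromov width is itself a genuine normalized symplectic capacity, the strong Viterbo conjecture forces every normalized capacity to coincide with it on convex domains, and the elementary volume bound then propagates to all of them.

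First I would introduce the two relevant functionals. Let
\[
c_B(X)=\sup\{\pi r^2 : B^{2n}(r)\hookrightarrow X \text{ symplectically}\}
\]
be the Gromov width, and set
\[
c_{\mathrm{vol}}(X)=\pi\left(\frac{\mathrm{vol}(X)}{\mathrm{vol}(B^{2n}(1))}\right)^{1/n}.
\]
Using $\mathrm{vol}(B^{2n}(1))=\pi^n/n!$ one checks directly that $c_{\mathrm{vol}}(X)=\left(n!\,\mathrm{vol}(X)\right)^{1/n}$, so the right-hand side of the claimed inequality is exactly $c_{\mathrm{vol}}(X)$. By Gromov's non-squeezing theorem $c_B$ satisfies all three axioms; in particular $c_B(B^{2n}(1))=\pi=c_B(Z)$, so $c_B$ is a bona fide normalized symplectic capacity.

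The heart of the argument is the inequality $c_B(X)\le c_{\mathrm{vol}}(X)$, valid on all domains. Since symplectomorphisms preserve the Liouville volume form $\tfrac{1}{n!}\omega_0^{n}$, any symplectic embedding $B^{2n}(r)\hookrightarrow X$ gives $\mathrm{vol}(B^{2n}(r))\le\mathrm{vol}(X)$, i.e.\ $r^{2n}\,\mathrm{vol}(B^{2n}(1))\le\mathrm{vol}(X)$; rearranging yields $\pi r^2\le c_{\mathrm{vol}}(X)$, and taking the supremum over admissible $r$ establishes the claim. Now let $c$ be any normalized symplectic capacity and $X\subseteq\R^{2n}$ convex. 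Applying the strong Viterbo conjecture to the pair $c$ and $c_B$, both of which are normalized, gives $c(X)=c_B(X)$, whence $c(X)=c_B(X)\le c_{\mathrm{vol}}(X)=\left(n!\,\mathrm{vol}(X)\right)^{1/n}$, as required.

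The one genuine subtlety, and the reason one cannot simply invoke the strong Viterbo conjecture for $c_{\mathrm{vol}}$ itself, is that $c_{\mathrm{vol}}$ is \emph{not} a normalized symplectic capacity: it fails the cylinder normalization, since $\mathrm{vol}(Z)=\infty$ forces $c_{\mathrm{vol}}(Z)=\infty\neq\pi$. The Gromov width therefore has to serve as the bridge, being honestly normalized yet trapped below the volume quantity on all domains. The main obstacle is thus conceptual rather than analytic, namely recognizing $c_B$ as the correct comparison capacity and verifying the volume inequality; once the strong Viterbo conjecture is granted, no further hard work is needed.
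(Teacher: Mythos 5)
Note first that the statement you were given is a \emph{conjecture} in the paper, not a theorem: the paper offers no proof of it, only the passing remark that the strong Viterbo conjecture implies it and that the Gromov width satisfies it because it is volume-sensitive. Your argument does not (and could not) prove the conjecture unconditionally, but it is a correct and complete proof of exactly the conditional implication the paper asserts, and it follows the same route the paper gestures at: the Gromov width is a genuine normalized capacity dominated by the volume functional $\left(n!\,\mathrm{vol}(X)\right)^{1/n}$ on all domains, and the strong Viterbo conjecture then transfers this bound to every normalized capacity on convex domains, with your observation that $c_{\mathrm{vol}}$ itself fails the cylinder normalization correctly identifying why the Gromov width must serve as the bridge.
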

\noindent
Note that by the seminal work of Artstein--Avidan--Karasev--Ostrover \cite{AKO14} the \emph{Viterbo conjecture} implies the famous \emph{Mahler conjecture}, an old conjecture in convex geometry.  In addition, we note at this point that the Gromov width $c_G$ fulfills the Viterbo conjecture, as it is volume-sensitive. From a topological point the next natural class of symplectic manifolds to investigate the behavior of normalized symplectic capacities are unit disk bundles $D^{*}M$. \\
In this paper, we study two normalized symplectic capacities the Gromov width $c_G$ and the Hofer-Zehnder capacity $c_{\hz}$ on unit disk bundles over lens spaces $L(p;1)$. The Gromov width \( c_G \) of a symplectic manifold \( (M,\omega) \) measures the size of the largest symplectically embedded ball in \( (M,\omega) \), i.e.,  
\[
c_G(M,\omega):= \sup \left\{\pi r^2 \mid \text{ there exists a symplectic embedding of } B^{2n}(r) \text{ into } (M,\omega) \right\},
\]
where \( B^{2n}(r) \) denotes the ball of radius \( r \) in $\R^{2n}$. \\
In contrast, the Hofer--Zehnder capacity \cite[Ch.\ 3]{HZ94} measures the size of a symplectic manifold in terms of the Hamiltonian dynamics on it. It is defined as follows:
$$
c_{\hz}(M,\omega):=\sup\left\lbrace \mathrm{max}(H)\ \vert\ H: M\to \R\ \  \text{smooth, admissible}\right\rbrace,
$$
where admissible means:
\begin{itemize}
    \item[(1)] $0\leq H$ and there exists an open set $U\subset M\setminus\partial M$ such that $H\vert_U\equiv 0$,
    \item[(2)] there exists a compact set $K\subset M\setminus\partial M$ such that $H\vert_{M\setminus K}\equiv\max(H)$,
    \item[(3)] all non-constant periodic solutions $\gamma:\R \to M$ of $\dot \gamma=X_H$ have period $T>1$.
\end{itemize}
We are interested in the following set up. Let $(N,g)$ be a Riemannian manifold and denote
$$
D_1N:=\lbrace (x,v)\in TN\ \vert \ g_x(v,v)<1\rbrace
$$
the unit disk bundle over $(N,g)$. As symplectic structure we consider the metric pullback of the canonical symplectic structure on $T^*N$ and denote it by $\dd\lambda$, i.e.\ $\lambda_{(x,v)}(\xi)=g_x(v,\dd\pi (\xi))$ for all $\xi\in T_{(x,v)}TN$. Although (co-)tangent bundles are a very classical class of symplectic manifolds, the Hofer--Zehnder capacity is only known in a few examples, namely real and complex projective space \cite{B23}, flat tori \cite{BBZ23}, and some convex subsets of \( \mathbb{R}^n \) \cite{AKO14}. In this paper, we add (some) lens spaces to this list. \\
Before stating the main theorem of this paper, we recall for the convenience of the reader the definition of the lens space \( L(p,1) \). Fix an odd number \( p \in \mathbb{N} \) and denote by  
\begin{equation}\label{e: defi lens space}
    L(p,1) = S^3 / \mathbb{Z}_p
\end{equation}
the lens space obtained from the 3-sphere by quotienting out the following \( \mathbb{Z}_p \)-action:  
\[
x \mapsto e^{\frac{2\pi}{p} j} x,
\]
where  $j = i \ominus i = \begin{pmatrix} i & 0 \\ 0 & -i \end{pmatrix}$. Here, we identify \( \mathbb{R}^4 \cong \mathbb{C} \times \mathbb{C} \).

\begin{theorem}\label{thm}
  For any odd $p\in\N$ look at $L(p;1)$ equipped with the metric induced from the round metric on $S^3$, denote by $l$ the length of the shortest closed geodesics. Then the Hofer-Zehnder capacity of $D_1L(p;1)$ equipped with the canonical symplectic form is given by
  $$
    c_{\hz}(D_1L(p;1),\dd\lambda)=pl= 2\pi.
  $$
\end{theorem}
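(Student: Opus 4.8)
The plan is to prove the two inequalities $c_{\hz}(D_1 L(p;1),\dd\lambda)\le 2\pi$ and $c_{\hz}(D_1 L(p;1),\dd\lambda)\ge 2\pi$ separately, using Gromov--Witten invariants for the upper bound and a magnetic billiard construction for the lower bound. The geometric input I would record first is the structure of closed geodesics: the shortest ones, of length $l=2\pi/p$, are exactly the orbits of the free $S^1$-action on $L(p;1)$ descended from $e^{i\theta}\cdot(z_1,z_2)=(e^{i\theta}z_1,e^{-i\theta}z_2)$ on $S^3$, i.e.\ the $\Z_p$-invariant great circles (the Hopf fibres), whereas a generic great circle descends injectively to a closed geodesic of length $2\pi$. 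Thus $pl=2\pi$, and the whole difficulty is that the short geodesics of length $2\pi/p$ are genuinely present even though the answer is governed by the length $2\pi$.

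For the lower bound I would first isolate why the naive radial Hamiltonian $H=f(|v|)$ is insufficient: a closed geodesic of length $L$ at radius $r$ produces a periodic orbit of period $L/f'(r)$, so admissibility forces $f'<l=2\pi/p$ and hence $\max H<2\pi/p$. To suppress the fast orbits coming from the short geodesics, I would exploit the free $S^1$-action above. Lifting it to a Hamiltonian action on $(T^*L(p;1),\dd\lambda)$ with moment map $\mu$, I would consider Hamiltonians of the form $H=f(|v|)+c\,\mu$; since $\mu$ is linear in $v$, this is a magnetic deformation of the reparametrised geodesic flow. I would then choose the twist $c$ so that the magnetic geodesics over the short-geodesic locus acquire period greater than $1$. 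Passing to the symplectic reduction by the $S^1$-action turns the surviving dynamics into a magnetic billiard on the quotient $L(p;1)/S^1=S^2$, whose closed trajectories have action bounded below by the full length $2\pi$. Taking the profile $f$ as steep as this constraint allows then produces admissible Hamiltonians with $\max H\to 2\pi$.

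For the upper bound I would use the standard principle (Lu, Hofer--Viterbo) that a non-vanishing genus-zero Gromov--Witten invariant with a point constraint, represented by spheres of symplectic area $A$, implies $c_{\hz}\le A$ for any open subset of the ambient closed manifold. Here the relevant compactification of $D_1L(p;1)$ is built from $T^*S^3$, realised as the affine quadric $\{\sum_{i=1}^4 z_i^2=1\}\subset\C^4$ with projective compactification the smooth quadric threefold $Q\subset\cp^4$, by passing to the (possibly orbifold) quotient $Q/\Z_p$. The lines of $Q$ have symplectic area $2\pi$ and sweep out every point; being generically not $\Z_p$-invariant, they descend to area-$2\pi$ spheres in $Q/\Z_p$. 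This is precisely the mechanism by which the Hofer--Zehnder capacity does not see the covering: the relevant curves retain their area $2\pi$ under the quotient. Verifying that the corresponding Gromov--Witten invariant is non-zero then yields $c_{\hz}(D_1L(p;1),\dd\lambda)\le 2\pi$.

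The main obstacle is the lower bound. One must establish the dichotomy that, after the twist, the short Hopf-fibre geodesics are the only source of periodic orbits of period $\le 1$ and that the magnetic deformation genuinely removes them without creating new short orbits among the twisted great circles or among the billiard trajectories appearing in the steep limit. Carrying out this magnetic billiard analysis on the reduced space, together with checking that the admissibility conditions (1)--(3) survive both the deformation and the limit $f\to\max H$, is where the essential work lies.
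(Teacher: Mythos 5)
Your overall architecture (magnetic deformation plus billiards for the lower bound, Gromov--Witten input via the quadric for the upper bound, and the observation that $pl=2\pi$) matches the paper, but both halves contain genuine gaps. The critical one is the choice of magnetic twist in the lower bound. You twist by the moment map $\mu$ of the $S^1$-action $(z_1,z_2)\mapsto(e^{i\theta}z_1,e^{-i\theta}z_2)$, i.e.\ in the direction $j=i\ominus i$ of the deck transformations --- exactly the direction of the short geodesics. But every $j$-Hopf fibre is invariant under this twisted flow in \emph{both} orientations: for $H=f(\vert v\vert)+c\,\mu$ the fibre orbits persist with effective speeds $\vert f'(r)\pm c\vert$, so the twist lengthens the periods of one orientation family while \emph{shortening} those of the other. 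Since admissibility requires $\frac{2\pi/p}{\vert f'(r)\pm c\vert}>1$ for both signs and all radii swept out by $f'$, no constant $c$ works once the slope of $f$ approaches $2\pi$; one is stuck near oscillation $2\pi/p$, the naive bound you set out to beat. The paper instead twists with $\varepsilon\,\dd\alpha$ where $\alpha$ is the standard contact form for $i$ --- \emph{transverse} to the deck direction $j$ --- and shows via the explicit solution $\gamma(t)=e^{i\varepsilon t/2}e^{J(x,v,\varepsilon)at}x$ and the phase identity $e^{i\varepsilon T_E/p}z_1z_2=z_1z_2$ (Lemma \ref{lemma lense space}) that this kills all short closed orbits on $L(p;1)$ \emph{except} the two Hopf-link circles $\gamma_\pm$. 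Relatedly, your billiard never materializes: symplectic reduction of a smooth Hamiltonian flow produces a smooth reduced flow, not bounce dynamics, and reduction by your $S^1$ collapses precisely the problematic fibre orbits, so their periods become invisible downstairs. In the paper the billiard wall comes from an explicit $\Z_p$-invariant potential $V_\varepsilon\to\infty$ near $\gamma_\pm$, and the substantive work is the period analysis of bounce orbits, trapped orbits, and $\Z_p$-symmetric arc configurations using the integrals of motion $c_1,c_2$ (Proposition \ref{prop potential}, Corollary \ref{periodlens}, Appendices \ref{A} and \ref{B}). None of this has a counterpart in your outline.

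The upper bound also has a gap, though a repairable one. You compactify $D_1L(p;1)$ itself as $Q^3/\Z_p$, but the $\Z_p$-action on $Q^3$ is not free: the divisor $Q^2$ at infinity is the space of oriented great circles, and the $j$-invariant circles (in particular $\gamma_\pm$) give fixed points, so $Q^3/\Z_p$ is an orbifold and your appeal to a nonvanishing Gromov--Witten invariant with a point constraint would require orbifold GW theory, which you do not supply. The paper sidesteps this entirely: an admissible Hamiltonian $H$ on $D_1L(p;1)$ lifts to the admissible Hamiltonian $H\circ\dd\mathrm{pr}$ on $D_1S^3$, whence $c_{\hz}(D_1L(p;1),\dd\lambda)\leq c_{\hz}(D_1S^3,\dd\lambda)$, and the GW computation is carried out on the \emph{smooth} quadric via $(D_1S^3,\dd\lambda)\cong(Q^3\setminus Q^2,2\sigma)$, the rank-2 Hermitian symmetric space result giving $\mathrm{GW}_A([pt.],\alpha,\beta)\neq 0$, and Lu's theorem, yielding the bound $2\sigma(A)=2\pi$. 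Your slogan that the relevant curves retain area $2\pi$ under the quotient is the right intuition, but the clean implementation is lifting Hamiltonians to the cover, not descending curves to an orbifold.
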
\noindent
 So far, in all examples where both Gromov width and Hofer--Zehnder capacity of disk tangent bundles are known (e.g.\ $S^2, \rp^2$ and flat tori, see \cite{BBZ23, B23, FR22}), they agreed. This theorem however shows that for $p$ large enough the Hofer--Zehnder capacity cannot agree with the Gromov width. This is easy to see as the Gromov width is volume sensitive i.e.\ must satisfy
   $$
    c_G(D_1L(p;1))\leq \left( 3! \mathrm{vol}(D_1L(p;1))\right)^{\frac{1}{3}}=\left( 3! \mathrm{vol}(D_1S^3)/p)\right)^{\frac{1}{3}}
    $$
    while the Hofer--Zehnder capacity of $D_1L(p;1)$ is by \Cref{thm} constant in $p$.\footnote{At first it might not look like it, but the length of the shortest geodesics on $L(p;1)$ is the length of prime geodesics on $S^3$ divided by $p$.} So, in particular, as a corollary of \Cref{thm}, we proved that neither the analog of the Viterbo conjecture nor the analog of the strong Viterbo conjecture can hold for disk bundles: \begin{Corollary}
        Let $(D_1 N,\dd\lambda)$ be the unit disk bundle over the n-dimensional Riemannian manifold $(N,g)$ equipped with the canonical symplectic structure. Then  \begin{enumerate}
            \item not all normalized symplectic capacities coincide on $(D_1 N,\dd\lambda)$,
            \item not all normalized symplectic capacities $c$ satisfy the inequality \[
            c\left(D_1 N,\dd\lambda \right)\leq \left(n!\ \mathrm{vol}(D_1 N,\dd\lambda) \right)^{\frac{1}{n}}.
            \]
        \end{enumerate}
    \end{Corollary}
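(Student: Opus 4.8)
The plan is to read the corollary existentially: it asserts that the two displayed properties \emph{fail} for some disk bundle, and so it suffices to exhibit an explicit family of counterexamples. I would take the family $\{D_1L(p;1)\}_{p\ \mathrm{odd}}$ furnished by \Cref{thm}. The entire argument consists of playing the constancy $c_{\hz}(D_1L(p;1),\dd\lambda)=2\pi$ against the fact that the symplectic volume of $D_1L(p;1)$ tends to $0$ as $p\to\infty$. Accordingly, I would first record the volume computation together with the volume-sensitivity of the Gromov width, and then simply let $p$ grow.

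For the volume, the Riemannian $p$-fold covering $S^3\to L(p;1)$ lifts to a $p$-fold symplectic covering $D_1S^3\to D_1L(p;1)$, so $\mathrm{vol}(D_1L(p;1),\dd\lambda)=\mathrm{vol}(D_1S^3,\dd\lambda)/p$. For the Gromov width I would invoke the standard volume bound: any symplectic ball $B^{2n}(r)\hookrightarrow(M,\omega)$ has symplectic volume $(\pi r^2)^n/n!\le \mathrm{vol}(M,\omega)$, whence $c_G(M,\omega)\le\bigl(n!\,\mathrm{vol}(M,\omega)\bigr)^{1/n}$. Combining these, and specializing to the half-dimension $n=3$ of $D_1L(p;1)$, gives $c_G(D_1L(p;1),\dd\lambda)\le\bigl(3!\,\mathrm{vol}(D_1S^3,\dd\lambda)/p\bigr)^{1/3}$, which converges to $0$. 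Hence there is $p_0$ such that for every odd $p\ge p_0$ we have $c_G(D_1L(p;1),\dd\lambda)<2\pi=c_{\hz}(D_1L(p;1),\dd\lambda)$ by \Cref{thm}. Since both $c_G$ and $c_{\hz}$ are normalized symplectic capacities, they disagree on $D_1L(p;1)$, proving (1).

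For (2) the same estimate does the work directly. The right-hand side of the putative inequality, $\bigl(3!\,\mathrm{vol}(D_1L(p;1),\dd\lambda)\bigr)^{1/3}=\bigl(3!\,\mathrm{vol}(D_1S^3,\dd\lambda)/p\bigr)^{1/3}$, tends to $0$, while for $c=c_{\hz}$ the left-hand side stays equal to $2\pi$ by \Cref{thm}. Thus for all sufficiently large odd $p$ the normalized capacity $c_{\hz}$ violates the Viterbo-type bound on $D_1L(p;1)$, which gives (2).

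The content is entirely carried by \Cref{thm}; the only delicate points are conceptual rather than computational. First, the corollary must be interpreted as negating the universal analogues of the (strong) Viterbo conjecture for disk bundles, so a single family of counterexamples suffices and one should not expect the conclusions to hold for every $(N,g)$ (indeed they fail, e.g., for $S^2$). Second, one must ensure that both $c_{\hz}$ and $c_G$ genuinely satisfy the capacity axioms, in particular normalization, so that the numerical comparison is a legitimate statement about \emph{normalized} capacities; both facts are classical.
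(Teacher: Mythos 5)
Your proposal is correct and is essentially the paper's own argument: the authors likewise combine the constancy $c_{\hz}(D_1L(p;1),\dd\lambda)=2\pi$ from Theorem~\ref{thm} with the volume-sensitivity bound $c_G(D_1L(p;1))\leq \left(3!\,\mathrm{vol}(D_1S^3)/p\right)^{1/3}$ and let $p\to\infty$. Your additional care in spelling out the $p$-fold covering volume computation, the standard ball-volume estimate for $c_G$, and the existential reading of the statement only makes explicit what the paper leaves implicit.
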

\begin{Remark}
In particular the Hofer--Zehnder capacity of $D_1L(p;1)$ does not obey any volume constraint, which is not too surprising as Usher showed that many closed symplectic manifolds have infinite Hofer--Zehnder capacity \cite{U12}. 
\end{Remark}
\noindent
\Cref{thm} is also interesting, when compared to a relative version of the Hofer--Zehnder capacity. For a closed submanifold $\Sigma\subset M$ not intersecting the boundary, i.e.\ $\Sigma\subset M\setminus \partial M$, the Hofer--Zehnder capacity relative to $\Sigma$ is defined as
$$
c_{\hz}(M,\Sigma,\omega):=\sup\left\lbrace \mathrm{max}(H)\ \vert\ H: M\to \R\ \  \text{smooth, admissible and vanishing on }\Sigma\right\rbrace.
$$
Then for any homotopy class $\nu\in \pi_1(M)$ we define $c_{\hz}^\nu(M,\Sigma,\omega)$ by replacing condition (3) with condition
\begin{itemize}
    \item[(3)'] all non-constant periodic solutions $\gamma:\R \to M$ with $[\gamma]=\nu$ of $\dot \gamma=X_H$ have period $T>1$.
\end{itemize}
\noindent
An important result by Weber \cite{Wbr06} shows that for closed $N$ and a non-zero class $\nu\in\pi_1(N)$, the Hofer-Zehnder capacity relative to the zero section is given by the length $l_\nu$ of the shortest closed geodesic in the class $\nu\in\pi_1(M)$, i.e.
\begin{equation*}
    c_{\hz}^\nu(D_1N,N,\dd\lambda)=l_\nu.
\end{equation*}
For non-aspherical, homogeneous spaces and positive curvature metrics on the 2-sphere the result was extended to the class of contractible loops ($\nu=0$) by Benedetti and Kang \cite[Cor.\ 2.8]{BK22}. In particular if either the shortest geodesic is non-contractible or $N$ is non-aspherical, homogeneous spaces or a positive curvature 2-sphere we can deduce
\begin{equation}\label{relative Hofer-Zehnder capacity}
c_{\hz}(D_1N,N,\dd\lambda)=l,
\end{equation}
where $l$ denotes the length of the shortest closed geodesic. This is because $c_{\hz}^\nu(D_1N,N,\dd\lambda)$ yields an upper bound for any homotopy class $\nu$ and the Hamiltonian $H(x,v):=l\vert v\vert$ can easily be modified, without changing the oscillation much, so that it becomes admissible.\\
\noindent
In \cite{B23} it was shown that for $N=\cp^n$ the Hofer--Zehnder capacity coincides with relative Hofer--Zehnder capacity, while for $N=\rp^n$ there is a factor of 2. Using \Cref{thm} and \eqref{relative Hofer-Zehnder capacity} we can extend the result for $N=\rp^3$ to the following class of lens spaces $L(p;1)$:
\begin{Corollary}\label{relative Hofer Zehnder Disk bundle lens space} For odd $p\in\N$, the Hofer-Zehnder capacity and the relative Hofer-Zehnder capacity of $D_1L(p;1)$ are in the following ratio to each other: 
    \begin{equation*}
        c_{\hz}(D_1L(p;1),\dd\lambda)=p\cdot c_{\hz}(D_1L(p;1),L(p;1),\dd\lambda).
    \end{equation*}
\end{Corollary}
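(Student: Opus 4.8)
The plan is to combine the main theorem with Weber's theorem on the relative capacity. By \Cref{thm}, we already know that
\[
c_{\hz}(D_1L(p;1),\dd\lambda)=pl=2\pi,
\]
where $l$ is the length of the shortest closed geodesic on $L(p;1)$. So the corollary reduces to computing the relative Hofer--Zehnder capacity $c_{\hz}(D_1L(p;1),L(p;1),\dd\lambda)$ and checking that it equals $l$, which would give exactly the stated ratio $c_{\hz}=p\cdot c_{\hz}(\cdot,L(p;1),\cdot)$.

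First I would invoke the relative capacity formula \eqref{relative Hofer-Zehnder capacity}, which asserts that for a closed $N$ whose shortest geodesic is non-contractible (or in the other special cases listed), $c_{\hz}(D_1N,N,\dd\lambda)=l$. Thus the key point is to verify that $L(p;1)$ falls into a regime where \eqref{relative Hofer-Zehnder capacity} applies. Since $\pi_1(L(p;1))\cong\Z_p$ is nontrivial, I would argue that the shortest closed geodesic on $L(p;1)$ is non-contractible: it is the image of a geodesic on $S^3$ realizing a nontrivial deck transformation of the $\Z_p$-action, hence represents a nonzero class $\nu\in\pi_1(L(p;1))$. Therefore Weber's result $c_{\hz}^\nu(D_1L(p;1),L(p;1),\dd\lambda)=l_\nu=l$ applies directly, and combining the upper bound over all homotopy classes with the admissible Hamiltonian $H(x,v)=l|v|$ yields $c_{\hz}(D_1L(p;1),L(p;1),\dd\lambda)=l$.

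Having established $c_{\hz}(D_1L(p;1),L(p;1),\dd\lambda)=l$, the conclusion is purely arithmetic: \Cref{thm} gives $c_{\hz}(D_1L(p;1),\dd\lambda)=pl$, and dividing by the relative capacity $l$ produces the factor $p$, i.e.
\[
c_{\hz}(D_1L(p;1),\dd\lambda)=p\cdot c_{\hz}(D_1L(p;1),L(p;1),\dd\lambda).
\]
This also parallels and generalizes the known case $N=\rp^3=L(2;1)$, where the factor was $2$, matching $p=2$ in the odd-$p$ family once one accounts for orientation subtleties.

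The main obstacle I anticipate is verifying that the shortest closed geodesic on $L(p;1)$ is genuinely non-contractible, so that Weber's formula applies without appealing to the contractible-loop extension. This requires understanding how geodesics on $S^3$ descend under the $\Z_p$-quotient and confirming that the minimal-length downstairs geodesic lifts to an arc connecting a point to its image under a nontrivial deck transformation (rather than a full closed geodesic on $S^3$). The footnote after \Cref{thm}, which notes that the shortest geodesic on $L(p;1)$ has length equal to the prime geodesic length on $S^3$ divided by $p$, already encodes exactly this lifting picture, so the remaining work is to record that the corresponding class $\nu\in\pi_1(L(p;1))$ is nonzero and invoke \eqref{relative Hofer-Zehnder capacity}.
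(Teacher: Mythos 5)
Your proposal is correct and takes essentially the same route as the paper, which obtains the corollary precisely by combining \Cref{thm} with \eqref{relative Hofer-Zehnder capacity}, justified exactly as you argue: the shortest closed geodesic on $L(p;1)$ has length $2\pi/p$ and lifts to an arc joining $x$ to $e^{\frac{2\pi}{p}j}x$, hence is non-contractible (and for the degenerate case $p=1$, where that geodesic is contractible, one instead invokes the clause of \eqref{relative Hofer-Zehnder capacity} for non-aspherical homogeneous spaces). Only your closing aside is slightly off: $\rp^3=L(2;1)$ has even $p$, so it is a parallel earlier case from \cite{B23} rather than an instance of the odd-$p$ family treated here.
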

\ \\
\noindent
\textbf{Outline:}
The proof of Theorem \ref{thm} has two steps. The first is finding a lower bound. For this we construct a nice Hamiltonian and determine its flow and in particular its periodic orbits explicitly. The dynamics that will give the correct lower bound lift to magnetic billiard dynamics on the 3-sphere with the Hopf-link removed. The construction of this flow is carried out in section 2. The second step is finding an upper bound that coincides with the lower bound. For the upper bound we use that
$$
c_{\hz}(DL(p;1),\dd\lambda)\leq c_{\hz}(DS^3,\dd\lambda),
$$
as admissible Hamiltonians on $DL(p;1)$ lift to admissible Hamiltonians on $DS^3$. An upper bound for the capacity of $DS^3$ is then obtained from results by Hofer--Viterbo \cite{HV92} and Lu \cite{Lu06}, which heavily relies on pseudoholomorphic curve techniques. This works because the Lerman cut \cite{ler} with respect to the geodesic flow $\overline{(DS^3,\dd\lambda)}$ is symplectomorphic to the complex quadric $Q^3:=\lbrace [z_0:\ldots:z_4]\ \vert\ z_0^2+\ldots + z_4^2=0\rbrace\subset \cp^4$ with Fubini-Study form. Hence Gromov-Witten invariants are explicitly computable.\\
\ \\
\noindent
\textbf{Acknowledgments.}\\
The authors are grateful to G. Benedetti, D. Cristofaro-Gardiner, R. Hind and S. Tabachnikov for helpful comments on a earlier draft of this paper. The authors are also grateful to their old friend G. Mo\u gol, who helped them out with his Python skills. We are indebted to the referee for their comments, which helped improve the paper. The authors further acknowledges funding by the Deutsche Forschungsgemeinschaft (DFG, German Research Foundation) – 281869850 (RTG 2229), 390900948 (EXC-2181/1) and 281071066 (TRR 191). L.M. gratefully acknowledges support from the Simons Center for Geometry and Physics, Stony Brook
University at which some of the research for this paper was performed during the program
\textit{Mathematical Billiards: at the Crossroads of Dynamics, Geometry, Analysis, and Mathematical Physics.} L.M. thanks also V. Assenza and V. Ramos for their warm hospitality at IMPA in Rio de Janeiro in February 2024 where some of the research for this paper was performed.  L.M. thanks the participants of the conferences \textit{Symplectic Dynamics in Montevideo} in Montevideo in February 2024 for helpful feedback on preliminary versions of this work.\\

\section{Lower bound: magnetic billiards on lens spaces}
Consider $S^3:=\lbrace \vert x \vert^2=1\rbrace \subset \R^4$, where $\vert \cdot\vert$ denotes the standard norm on $\R^4$. 
In order to obtain the correct lower bound for the Hofer--Zehnder capacity of \( D_1 S^3 \), one can use the Hamiltonian \( H(x,v) := 2\pi |v| \). The Hamiltonian flow on \( D_1 S^3 \) generated by \( H \) is a reparametrization of the geodesic flow on \( S^3 \) equipped with the round metric, in such a way that all orbits are closed and have period one. Now modifying $H$ a little near the zero-section and the boundary one obtains an admissible Hamiltonian that has ossicillation arbitrarily close to $1\cdot 2\pi$, hence gives the correct lower bound to prove Theorem \ref{thm} in the case of $p=1$. For lens spaces the situation will be more complicated as prime geodesics can have different lengths. Let us examine the setup more closely. Let \( L(p,1) \) be as in \eqref{e: defi lens space}, and denote by \( g \) the metric induced from the round metric on \( S^3 \).  
\begin{Lemma}\label{l1}
    All prime geodesics on \( (L(p,1), g) \), except for those in the direction of \( jx \), have length \( 2\pi \). The prime geodesics in the direction of \( jx \) have length \( 2\pi / p \).
\end{Lemma}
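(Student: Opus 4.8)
The plan is to lift the problem to the universal cover $S^3$, where every geodesic is a great circle $\gamma(t)=\cos(t)\,x+\sin(t)\,v$ with $x,v\in\R^4$, $|x|=|v|=1$, $\langle x,v\rangle=0$, all of length $2\pi$. A closed geodesic on $L(p,1)$ is exactly the projection of such a great‑circle arc that closes up under a deck transformation: $\bar\gamma=\pi\circ\gamma$ is closed of period $T$ iff $\gamma(T)=g\gamma(0)$ and $\dot\gamma(T)=g\dot\gamma(0)$ for some $g=e^{\frac{2\pi k}{p}j}\in\Z_p$ (the velocity condition being forced by smoothness, since $g$ acts linearly and isometrically). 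The prime length is then the smallest positive such $T$.

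The key device is to complexify $\R^4$ and package position and velocity into a single vector. A direct computation gives $\gamma(t)+i\dot\gamma(t)=e^{-it}\,\zeta$ with $\zeta:=x+iv\in\C^4$, so the closing condition $\gamma(T)=g\gamma(0),\ \dot\gamma(T)=g\dot\gamma(0)$ becomes the single eigenvalue equation $g_{\C}\,\zeta=e^{-iT}\zeta$, where $g_{\C}$ is the complexification of $g$. Finding prime lengths thus reduces to asking which great circles have initial data $\zeta$ that is an eigenvector of some nontrivial $g_{\C}$, and with which eigenvalue.

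Next I would diagonalize. Since $g=e^{\frac{2\pi k}{p}j}$ is a function of $j$ and $j^2=-\mathrm{id}$, the operator $g_{\C}$ is simultaneously diagonalized with $j_{\C}$: writing $E_{\pm}$ for the $\pm i$‑eigenspaces of $j_{\C}$ (each of complex dimension two), $g_{\C}$ acts on $E_{\pm}$ as the scalar $e^{\pm\frac{2\pi i k}{p}}$. Because $p$ is odd, $\frac{2\pi k}{p}\notin\{0,\pi\}\pmod{2\pi}$ for $1\le k\le p-1$, hence $e^{2\pi ik/p}\ne e^{-2\pi ik/p}$; in particular $-\mathrm{id}\notin\Z_p$, no nontrivial $g_{\C}$ is scalar, and the eigenvectors of each $g_{\C}$ with $g\ne\mathrm{id}$ are exactly $E_{+}\cup E_{-}$. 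Translating back to real data, $\zeta=x+iv\in E_{-}$ is equivalent to $v=jx$ and $\zeta\in E_{+}$ to $v=-jx$; both describe the (un‑oriented) great circles tangent to the vector field $x\mapsto jx$, i.e.\ precisely the geodesics in the direction of $jx$.

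Assembling these, a geodesic admits a nontrivial closing transformation iff its direction is $\pm jx$. For such a geodesic, closing via $g=e^{\frac{2\pi k}{p}j}$ forces $e^{-iT}=e^{\mp\frac{2\pi ik}{p}}$, so the closing times fill $\frac{2\pi}{p}\Z$ and the minimal positive one is $2\pi/p$; every other geodesic closes only through $g=\mathrm{id}$, giving $e^{-iT}=1$ and prime length $2\pi$. The geometric normalizations $|x|=|v|=1$ and $x\perp v$ hold automatically, since for $\zeta\in E_{\pm}$ one reads $|x|=|v|$ and $\langle x,v\rangle=0$ off the structure of $E_{\pm}$, and $v=\pm jx$ satisfies them because $j$ is a skew orthogonal complex structure. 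I expect the main obstacle to be conceptual rather than computational: setting up the closing condition as the clean eigenvalue equation $g_{\C}\zeta=e^{-iT}\zeta$, and recognizing that the oddness of $p$ is exactly what keeps the antipodal map $-\mathrm{id}$ out of $\Z_p$ and so prevents all lengths from collapsing (as they do for $\rp^3=L(2,1)$).
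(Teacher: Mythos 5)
Your proof is correct, but it takes a genuinely different route from the paper's. The paper argues synthetically in two lines: all points $e^{2\pi jq/p}x$ identified with $x$ lie on the great circle $t\mapsto e^{jt}x$ through $x$ in direction $jx$, and since $p$ is odd no point of $S^3$ is identified with its antipode, so the great circle through $x$ and $e^{2\pi jq/p}x$ is unique; hence only the $jx$-direction geodesics are shortened by the quotient, and they close after time $2\pi/p$. You instead encode the closing condition $\gamma(T)=g\gamma(0)$, $\dot\gamma(T)=g\dot\gamma(0)$ as the single spectral equation $g_{\C}\,\zeta=e^{-iT}\zeta$ for $\zeta=x+iv$, and diagonalize $g_{\C}$ simultaneously with $j_{\C}$, identifying the eigenvectors that arise from real initial data as exactly those with $v=\pm jx$; the oddness of $p$ enters as the separation of the eigenvalues $e^{\pm 2\pi i k/p}$ on $E_{\pm}$, i.e.\ as $-\mathrm{id}\notin\Z_p$. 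Your computation checks out: $\gamma+i\dot\gamma=e^{-it}\zeta$ is correct, $\zeta\in E_{\mp}$ is indeed equivalent to $v=\pm jx$ (the second real equation $jv=\mp x$ being automatic from $j^2=-\mathrm{id}$), and the closing times come out as $(2\pi/p)\Z$ for the Hopf-direction geodesics versus $2\pi\Z$ for all others. In some respects your argument is more complete than the paper's terse one: it makes the velocity-matching condition and the exclusion of every other direction fully explicit, it shows transparently where the statement fails for even $p$ (there $k=p/2$ makes $g_{\C}$ the scalar $-\mathrm{id}$, every $\zeta$ is an eigenvector, and all geodesics close at $T=\pi$, as for $\rp^3=L(2,1)$), and it would adapt with little change to $L(p;q)$. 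What the paper's approach buys is brevity and geometric visibility (no complexification needed, just uniqueness of the great circle through two non-antipodal points); what yours buys is a mechanical eigenvalue criterion that simultaneously classifies which geodesics close under which deck transformation and at which times.
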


\begin{proof}
Take a point $x\in S^3$, now all points $e^{2\pi j q/p}x$ identified with $x$ under the $\Z_p$ action lie on the geodesic in direction $jx$. As $p$ is odd for all $q$ the geodesic in direction $jx$ is the unique geodesic through $x$ and $e^{2\pi jq/p}x$. 
\end{proof}
\noindent
Observe that this means simply reparametrizing the geodesic flow, as in the case of $S^3$, is not enough to obtain the correct lower bound. Instead, we want to introduce a small magnetic perturbation to destroy almost all of the short geodesics.
\noindent
By abuse of notation, we write $i=i\oplus i$ for the standard complex structure on $\C^2$. This complex structure induces the standard contact form on $S^3$, namely
$$
\alpha_x: T_xS^3\to \R;\ v\mapsto \frac{1}{2}\mathrm{Re}\langle ix,v \rangle.
$$
Here $\langle \cdot, \cdot\rangle$ denotes the standard Hermitian product on $\C^2$ with respect to $i$. To find a lower bound of the Hofer--Zehnder capacity we need to explicitly construct an admissible Hamiltonian. The strategy for constructing such a Hamiltonian is as follows: \begin{enumerate}
    \item We will use the magnetic geodesic flow on $\left(S^3, g, \varepsilon\cdot\dd\alpha \right)$ to destroy almost all short geodesics except two, $\gc_{\pm}$ parametrizing the Hopf link (see Lemma \ref{lemma lense space}).
    \item After removing the Hopf link $L=\gc_{+}\cup \gc_{-}$ in $S^3$ and adding a $\Z_p$-invariant potential, which tends to infinity near $\gc_{\pm}$, to the kinetic Hamiltonian the induced Hamiltonian flow on $L(p;1)$ has no fast periodic orbits (see Corollary \ref{periodlens}).
    \item The Hamiltonian can be modified to be admissible, and its oscillation is given by $pl$, which yields a lower bound on the Hofer-Zehnder capacity (see Corollary \ref{lower bound Hofer Zehnder capacity}).
\end{enumerate}
\noindent
\textbf{Intro: Magnetic systems}\\
Before we start with the construction of the lower bound we recall basic facts about magnetic systems, for a more detailed introduction look at \cite{Bd14} and the references there in. The triple $(M,g,\sigma)$ is called a \emph{magnetic system}, where $M$ is a closed manifold, $g$ a Riemannian metric on $M$ and $\sigma\in\Omega^2(M)$ a closed  $2$-form on $M$. This data gives rise to a twisted symplectic form $\omega_{\sigma}=\dd\lambda-\pi^*\sigma$ on the tangent bundle $TM$, where $\pi$  denotes the canonical projection of $TM$ onto $M$ and $\lambda$ the pullback of the canonical symplectic form on $T^*M$ to $TM$ via the duality isomorphism given by $g$. The symplectic form $\omega_{\sigma}$ is nondegenerate so there exist a unique vector field $X_E^{\sigma}$ on $TM$, called the Hamiltonian vector field of $(E, \omega_{\sigma})$, such that $$\omega_{\sigma}\left( X_{E}^{\sigma}, \cdot\right)=\dd E(\cdot),$$ where $E(x,v)=\frac{1}{2}g_x(v,v)$ denotes the kinetic Hamiltonian on $TM$. The flow associated to $X_E^{\sigma}$ is called the Hamiltonian flow of $E$ and is given by \[
\varPhi_{E, \sigma}^t:TM \longrightarrow TM:(x,v)\mapsto \Gamma_{(x,v)}(t)
\]
where $\Gamma_{(x,v)}(t)$ is the unique solution of $\dot{\Gamma}=X_{E}^{\sigma}$ with $\Gamma(0)=(x,v)$. Since the derivative of $E$ vanishes on the energy surface \[\Sigma_k=\{(x,v)\in TM: E(x,v)=k\}=E^{-1}(k)\subset TM,\] with $k\in[0,\infty)$, these submanifolds are invariant by the Hamiltonian flow $\varPhi_{E, \sigma}$. By \cite{Gin} flow lines are of the form $\Gamma=(\gamma,\dot\gamma)$, where $\gamma:\R\to M$ is a magnetic geodesic on $(M, g, \sigma)$ i.e.\ a solution to \begin{equation}\label{general magnetic geodesic equation}
	\nabla_{\dgc}\dgc= Y_{\gc}\dgc,
\end{equation}
where $Y:TM\longrightarrow TM$, called the \emph{Lorenz force}, is the unique bundle endomorphism which satisfies
\[
g_p\left(Y_p(v), (w)\right)=\sigma_p(v, w) \quad \forall (p,v)\in TM. 
\] 
Using this duality between magnetic geodesic on $(M,g,\sigma)$ and flow lines of $X_{E}^{\sigma}$ and the fact that the energy surfaces $\Sigma_k \subset TM$ are invariant by Hamiltonian flow $\varPhi_{E, \sigma}$ we see that if $\gc$ is a magnetic geodesic on $(M,g,\sigma)$, then its kinetic energy is a conserved quantity i.e. 
\begin{equation}\label{kinetic energy is integral of motion}
	\frac{1}{2}g_{\gc(t)}(\dgc(t), \dgc(t))=E(\gc(t),\dgc(t))= E(\gc(0), \dgc(0))=\frac{1}{2}g_{\gc(0)}(\dgc(0), \dgc(0))\quad \forall t\in \R.
\end{equation}
In addition, it is quite natural to ask how the dynamics of the Hamiltonian flow $\varPhi_{E, \sigma}$ changes if we deform the twisted symplectic form as follows \begin{equation*}
    \left(\omega_{\sigma}\right)_{\varepsilon}=\dd \lambda-\varepsilon\pi^*\sigma \textit{ for }\varepsilon\in\R.
\end{equation*}
Thus a deformation of $\varepsilon$ induce a deformation of the Hamiltonian vector field $\left(X_{E,\sigma}\right)_{\varepsilon}$, where $\left(X_{E,\sigma}\right)_{\varepsilon}$ is defined through\begin{equation*}
    \left(\omega_{\sigma}\right)_{\varepsilon}\left( \left(X_{E,\sigma}\right)_{\varepsilon}, (\cdot)\right)=\dd E(\cdot).
\end{equation*} Note also that for $\varepsilon=0$, the vector field $\left(X_{E,\sigma}\right)_0$ is exactly the geodesic vector field, thus its associated Hamiltonian flow is the geodesic flow of $(M,g)$. By the duality mentioned before \eqref{general magnetic geodesic equation}, if the curve $\Gamma_{\varepsilon}$ is a flow line of the Hamiltonian vector field $\left(X_{E,\sigma}\right)_{\varepsilon}$, then its canonical projection $\pi\left(\Gamma_{\varepsilon}\right)$ is a magnetic geodesic on $(M,g,\sigma)$ of strength $\varepsilon$, i.e., it is a solution of 
 \begin{equation}\label{general equation for magnetic geodesics of strength s}
	\nabla_{\dgc}\dgc=\varepsilon Y_{\gc}\dgc.
\end{equation} 
If we make use of \eqref{kinetic energy is integral of motion} it follows directly that if $\gc$ is a magnetic geodesic of strength $\varepsilon$, then its kinetic energy is an integral of motion. Furthermore, if the magnetic field $\sigma\in \Omega^2(M)$ is exact, i.e.\ there exists a $\beta\in \Omega^1(M)$ such that $ \dd\beta=\sigma$, there is another point of view, the \emph{Lagrangian point of view}. To be precise the curve $\gc$ is a magnetic geodesic on $(M, g, \sigma)$ of strength $\varepsilon$, if and only if $\gc$ is a solution of the Euler--Lagrange equations associated to the Lagrangian 
\begin{equation*}
    L:TM\longrightarrow\R:(x,v)\mapsto\frac{1}{2}g_x(v,v)+ \varepsilon \beta_x(v). 
\end{equation*}
\noindent
\textbf{Step 1: Magnetic geodesic flow on $L(p;1)$}\\
Now we will focus on a more specific magnetic system: $\left(S^3, g, \varepsilon\cdot\dd\alpha \right)$, the 3-sphere equipped with the round metric $g$ and the standard contact form $\alpha$ as a magnetic field.  To construct the lower bound of the Hofer-Zehnder capacity we will use this magnetic system. This dynamical system was studied in full detail by Albers-Benedetti and the second author in \cite{ABM}. By following precisely the lines of \cite[Eq. (3.2)]{ABM}  we see that a magnetic geodesic $\gamma$ starting at $\gamma(0)=x\in S^3$ in direction $\dot \gamma (0)=v\in T_xS^3$ is a curve which is a solution of the following differential equation
\begin{equation}\label{e1}
\ddot \gamma -i\varepsilon\dot \gamma +(c^2 -\varepsilon \delta)\gamma =0,
\end{equation}
in the ambient space $\C^2$, where we used that \( c=\vert \dot{\gamma} \vert=\vert v \vert \) and \( \delta=\mathrm{Re} \langle i\gamma, \dot{\gamma} \rangle = \mathrm{Re} \langle ix, v \rangle = \cos(\psi) |v| \) are by \cite{ABM} integrals of motion. Here, \( \psi \) represents the angle between the Reeb vector field \( R_z \) and a tangent vector \( (z,v) \in TS^3 \).
 By the previous discussion we see that $\gc$ is magnetic geodesic on $(S^3,g, \varepsilon\dd\alpha)$ if and only if $(\gc, \dgc)$ is a flow line of the Hamiltonian system $(TS^3, \dd\lambda-\varepsilon\pi^*\dd\alpha, E)$.  
As the magnetic form is exact we can shift the zero-section to symplectically embed the twisted tangent bundle into the standard tangent bundle
\begin{equation}\label{shift of zero section}
\iota: (D_{1-2\varepsilon}S^3, \dd\lambda-\varepsilon\pi^*\dd\alpha)\hookrightarrow (D_1 S^3,\dd\lambda):\ (x,v)\mapsto (x,v+\varepsilon ix).
\end{equation}
To give a lower bound on the Hofer--Zehnder capacity of $(D_1 S^3,\dd\lambda)$ we will first give a lower bound on the periods of orbits of the Hamiltonian system $(D_{1-2\varepsilon}S^3, \dd\lambda-\varepsilon\pi^*\dd\alpha, E)$: 
\begin{Lemma}\label{Lower bound for magnetic geodesics on S3}
    All non-constant periodic orbits of the kinetic Hamiltonian
    $$
    E:D_{1-2\varepsilon}S^3\to\R;\quad (x,v)\mapsto \frac{1}{2}\vert v\vert_x^2
    $$
    have period 
    $$
    T_E\geq \begin{cases} \frac{2\pi}{\vert v\vert} \ \ \text{for} \ \vert v\vert\geq\varepsilon\\
     \frac{2\pi}{\varepsilon} \ \ \text{for} \ \vert v\vert\leq\varepsilon.
    \end{cases}
    $$
\end{Lemma}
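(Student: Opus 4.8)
The plan is to solve the magnetic geodesic equation \eqref{e1} explicitly along each orbit and read off the possible periods. Since an orbit $\Gamma=(\gc,\dgc)$ of $E$ projects to a magnetic geodesic of strength $\varepsilon$, and since $c=\lvert\dgc\rvert$ and $\delta=\Re\langle i\gc,\dgc\rangle$ are integrals of motion, \eqref{e1} becomes a \emph{constant-coefficient} linear ODE on the fixed orbit. Because multiplication by $i$ is $\C$-linear on $\C^2$, I would insert the ansatz $\gc(t)=e^{i\omega t}w$ with $\omega\in\R$ and $w\in\C^2$, reducing \eqref{e1} to the scalar characteristic equation
\[
\omega^2-\varepsilon\omega-(c^2-\varepsilon\delta)=0,
\]
with roots $\omega_\pm=\tfrac12\big(\varepsilon\pm\sqrt{\varepsilon^2+4c^2-4\varepsilon\delta}\big)$. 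The first thing to check is that the discriminant is nonnegative: from $\lvert\delta\rvert\le c$ (Cauchy--Schwarz, using $\lvert i\gc\rvert=1$ on $S^3$) one gets $\varepsilon^2+4c^2-4\varepsilon\delta=(\varepsilon-2c)^2+4\varepsilon(c-\delta)\ge0$, so both roots are real. The single observation that drives the whole argument is the elementary relation $\omega_++\omega_-=\varepsilon$.

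Next I would write the general solution $\gc(t)=e^{i\omega_+t}w_++e^{i\omega_-t}w_-$ (the repeated-root case $\omega_+=\omega_-$ occurs only for $c=\varepsilon/2,\ \delta=c$ and collapses to a single exponential once boundedness is imposed). Requiring $\lvert\gc(t)\rvert\equiv1$ forces the modes to be Hermitian-orthogonal, $\langle w_+,w_-\rangle=0$ and $\lvert w_+\rvert^2+\lvert w_-\rvert^2=1$; this orthogonality is exactly what makes the cross terms in $c^2=\lvert\dgc\rvert^2$ and in $\delta$ drop out. I then split into cases. If one of the $w_\pm$ vanishes, $\gc(t)=e^{i\omega t}w$ is a single Hopf-type circle with $\lvert\omega\rvert=c$ (so $\delta=\pm c$), whose minimal period is exactly $2\pi/c$. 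If both $w_\pm\ne0$, periodicity with period $T$ (equivalently periodicity of $\gc$, since $\dgc$ is then automatically periodic) forces $e^{i\omega_+T}=e^{i\omega_-T}=1$ by linear independence of $e^{i\omega_+t},e^{i\omega_-t}$; hence $\omega_+T=2\pi m$ and $\omega_-T=2\pi n$ with $m,n\in\Z$. Adding these and using $\omega_++\omega_-=\varepsilon$ gives $\varepsilon T=2\pi(m+n)$, and since $T>0$ we must have $m+n\in\Z_{>0}$, so $T\ge 2\pi/\varepsilon$.

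Finally I would combine the cases against the trichotomy of $c$ versus $\varepsilon$: the single-circle orbits have $T=2\pi/c$, which equals the claimed bound when $c\ge\varepsilon$ and satisfies $2\pi/c\ge2\pi/\varepsilon$ when $c\le\varepsilon$; the genuine two-frequency orbits have $T\ge2\pi/\varepsilon$, and because $2\pi/\varepsilon\ge2\pi/c$ whenever $c\ge\varepsilon$, this also dominates the claimed bound in both regimes. Together these yield precisely $T_E\ge2\pi/\lvert v\rvert$ for $\lvert v\rvert\ge\varepsilon$ and $T_E\ge2\pi/\varepsilon$ for $\lvert v\rvert\le\varepsilon$. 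I expect the main obstacle to be the bookkeeping in the two-frequency case: one must be careful that periodicity really forces both $\omega_\pm T\in2\pi\Z$ (this uses $w_\pm\ne0$ together with linear independence, and also covers the borderline $\omega_-=0$ with $n=0$), and one must separately dispatch the degenerate repeated-root configuration $c=\varepsilon/2,\ \delta=c$, where boundedness on $S^3$ kills the secular $t\,e^{i\omega t}$ term and leaves a single circle of period $2\pi/c$. Everything else is a short computation once the relation $\omega_++\omega_-=\varepsilon$ is in hand.
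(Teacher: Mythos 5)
Your proof is correct, and it rests on the same explicit-solution framework as the paper's: what you derive from the ansatz $\gamma(t)=e^{i\omega t}w$ --- the real roots $\omega_\pm=\tfrac12\bigl(\varepsilon\pm\sqrt{\varepsilon^2+4(c^2-\varepsilon\delta)}\bigr)$ and the two-mode form of the solution --- is exactly the formula $\gamma(t)=e^{i\theta_+t}p_++e^{i\theta_-t}p_-$ that the paper imports from \cite{ABM}. The genuine difference lies in the resonance step. The paper estimates the slow frequency by the speed, $c=\sqrt{\theta_+^2\vert p_+\vert^2+\theta_-^2\vert p_-\vert^2}\ge\vert\theta_-\vert$, concluding $T\ge 2\pi/\vert\theta_-\vert\ge 2\pi/c$ when $\theta_-\ne0$, and then treats $\theta_-=0$ separately, computing $T=2\pi/\varepsilon$ and checking via $c^2=\varepsilon\delta$, $\delta<c$ that such orbits have $c<\varepsilon$. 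You instead add the two resonance conditions $\omega_\pm T\in2\pi\Z$ and invoke Vieta's relation $\omega_++\omega_-=\varepsilon$ to get $\varepsilon T\in2\pi\Z_{>0}$, hence $T\ge 2\pi/\varepsilon$ for \emph{every} periodic orbit carrying both modes; this subsumes the paper's $\theta_-=0$ case without any computation of $c$, and requires no comparison between $\vert\theta_-\vert$ and $c$. (Neither bound dominates the other: for two-mode orbits with $\theta_-\neq0$ and $c<\varepsilon$ the paper's $2\pi/c$ is stronger, for $c>\varepsilon$ yours is; both suffice for the lemma.) Your bookkeeping also closes two small points the paper glosses over: read literally, the assertion $T\ge 2\pi/\vert\theta_-\vert$ fails when $p_-=0$, where the minimal period is $2\pi/\theta_+=2\pi/c$ --- your single-mode case, justified by $w_\pm\ne0$ together with Hermitian orthogonality of the modes, handles this correctly; and the repeated-root configuration $c=\varepsilon/2$, $\delta=c$, where the paper's formula for $p_\pm$ divides by $\theta_+-\theta_-=0$, is dispatched by your observation that boundedness on $S^3$ kills the secular term, leaving a Reeb circle of period $2\pi/c=4\pi/\varepsilon\ge 2\pi/\varepsilon$, consistent with the claimed bound. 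Your verification that the discriminant is nonnegative, via $\varepsilon^2+4c^2-4\varepsilon\delta=(\varepsilon-2c)^2+4\varepsilon(c-\delta)$ and Cauchy--Schwarz, is likewise a detail the paper leaves implicit in citing \cite{ABM}.
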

\begin{proof}
In \cite{ABM} it is shown that the orbits stay on Clifford tori, spiraling around the Reeb direction. Indeed there is an explicit formula for general magnetic geodesics
    $$
    \gamma(t)=e^{i\theta_+t}p_++e^{i\theta_-t}p_-,
    $$
    where
    $$
    \theta_\pm=\frac{\varepsilon\pm\sqrt{\varepsilon^2+4(c^2-\varepsilon\delta)}}{2}\ \ \& \ \ p_\pm=\mp\frac{\theta_\mp x+iv}{\theta_+ -\theta_-}.
    $$
    Further $c:=\vert\dot \gamma\vert=\vert v\vert$ and $\delta:= \mathrm{Re}\langle i\gamma,\dot\gamma\rangle=\mathrm{Re}\langle ix,v\rangle$ are integrals of motion.
    If $\theta_-\neq 0$ it follows immediately that
    $$
    T_E\geq \frac{2\pi}{\vert \theta_-\vert}\geq \frac{2\pi}{c},
    $$
    where we used that
    $$
    c=\sqrt{\theta_+^2\vert p_+\vert^2+\theta_-^2\vert p_-\vert^2}\geq \sqrt{\theta_-^2\left(\vert p_+\vert^2+\vert p_-\vert^2\right)}=\vert \theta_-\vert.
    $$
    If on the other hand $\theta_-=0$, we find $\gamma(t)=e^{i\varepsilon t}p_++p_-$. It easily follows that
    $$
    T_E=\frac{2\pi}{\varepsilon}.
    $$
    As $\theta_-=0$ implies $c^2=\varepsilon\delta$ and $\delta<c$ we have $c<\varepsilon$.
\end{proof}
\noindent
 The advantage of the magnetic geodesic flow becomes clear when studying the lens space. The idea is that the magnetic term bends the short flow lines (just an $\varepsilon$-bit) away from the geodesic that is contracted by the quotient $S^3\to L(p;1)$. Now lets look at the details. First, we need to make sure that the magnetic term $\alpha$ actually descends to the quotient. Hence, we can think of $\alpha\in \Omega_1(L(p;1))$.
\begin{Lemma}\label{Magnetic flow is invariant under Zp}
    The 1-form $\alpha\in\Omega_1(S^3)$ is $\Z_p$-invariant, i.e.
    $$
    \left(e^{\frac{2\pi}{p}j}\right)^*\alpha_{x}=\alpha_{e^{-\frac{2\pi}{p}j} x}.
    $$
\end{Lemma}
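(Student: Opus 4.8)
The plan is to reduce the statement to two purely algebraic properties of the linear map $\phi := e^{\frac{2\pi}{p}j}$ on $\C^2$: that it commutes with the complex structure $i = i\oplus i$ used to define $\alpha$, and that it is unitary for the Hermitian product $\langle\cdot,\cdot\rangle$. Since $\phi$ is linear (and lies in $U(2)\subset SO(4)$, so it preserves $S^3$), its differential at every point is $\phi$ itself, and the pullback acts on tangent vectors simply by applying $\phi$. This turns the whole assertion into a one-line manipulation of the Hermitian product.

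First I would record the two facts. Viewed as $\C$-linear maps with respect to $i$, both $i=\mathrm{diag}(i,i)$ and $j=\mathrm{diag}(i,-i)$ are diagonal, hence commute; consequently $i$ commutes with $\phi = e^{\frac{2\pi}{p}j}=\mathrm{diag}(e^{2\pi i/p},e^{-2\pi i/p})$. Moreover the diagonal entries of $\phi$ have unit modulus, so $\phi\in U(2)$ preserves $\langle\cdot,\cdot\rangle$; in particular its adjoint is $\phi^{-1}=e^{-\frac{2\pi}{p}j}$.

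Then I would evaluate the pulled-back covector on an arbitrary tangent vector $w\in T_{e^{-\frac{2\pi}{p}j}x}S^3$ (the point mapped to $x$ by $\phi$, so that $\phi^*\alpha_x$ indeed sits at this point). Applying, in this order, $\dd\phi=\phi$, unitarity, and $[\phi,i]=0$:
$$
\left(\left(e^{\frac{2\pi}{p}j}\right)^*\alpha_x\right)(w)=\tfrac12\,\mathrm{Re}\,\langle ix,\phi w\rangle=\tfrac12\,\mathrm{Re}\,\langle \phi^{-1}ix, w\rangle=\tfrac12\,\mathrm{Re}\,\langle i\,e^{-\frac{2\pi}{p}j}x, w\rangle=\alpha_{e^{-\frac{2\pi}{p}j}x}(w),
$$
which is exactly the claimed identity.

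I do not expect a genuine obstacle here; the only point needing care is bookkeeping. One must keep the complex structure $i=i\oplus i$ (defining $\alpha$ and $\langle\cdot,\cdot\rangle$) strictly separate from the matrix $j=i\ominus i$ generating the $\Z_p$-action, and observe that the commutation $[\phi,i]=0$ is precisely what makes $\phi$ genuinely $\C$-linear with respect to $i$, so that the word ``unitary'' is meaningful before it is invoked.
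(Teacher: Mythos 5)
Your proof is correct and is essentially the paper's own argument: both move $e^{\frac{2\pi}{p}j}$ across the Hermitian product using unitarity and then commute $e^{-\frac{2\pi}{p}j}$ past $i$ via $[i,j]=0$. Your version merely makes explicit the bookkeeping (linearity of the differential, $\mathrm{Re}$, and the base point of the pulled-back form) that the paper's one-line computation leaves implicit.
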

\begin{proof} Observe that $ji=ij$, hence
    $$
    \left(e^{\frac{2\pi}{p}j}\right)^*\alpha_{x}(\cdot)=\frac{1}{2}\langle ix, e^{\frac{2\pi}{p}j}\cdot\rangle=\frac{1}{2}\langle  e^{-\frac{2\pi}{p}j}ix,\cdot\rangle=\frac{1}{2}\langle i e^{-\frac{2\pi}{p}j}x,\cdot\rangle=\alpha_{e^{-\frac{2\pi}{p}j} x}.
    $$
\end{proof}
\noindent
This means that the magnetic twist descends to the lens space, but the magnetic geodesic flow on $L(p;1)$ has (at least) two short periodic orbits $\gamma_+(t)=(e^{2\pi it},0)$ and $\gamma_-(t)=(0,e^{2\pi it})$, as $j\gamma_\pm=\pm i\gamma_\pm$ and magnetic geodesics in Reeb direction $ix$ are ordinary geodesics.\\
\ \\
\textbf{Intermezzo: Alternative description of magnetic geodesics on $S^3$}\\
In order to finish Step 1 we need to prove that these two are the only short periodic orbits of the magnetic flow on $L(p;1)$. The following alternative description of magnetic geodesics will be useful. It is based on the geometric intuition that one can think of a magnetic geodesic as a composition of rotation in Reeb direction and a rotation of some special complex structure, to be precise: 
\begin{Proposition}\label{p1}
    A solution $\gamma$ of \eqref{e1} with initial conditions $ \gamma(0)=x\in S^3$ and $\dot\gamma(0)=v \in T_xS^3$ can be written as
    $$
    \gamma(t)=e^{\frac{i\varepsilon}{2}t}e^{J(x,v,\varepsilon)at}x
    $$
    where $a=\frac{1}{2}\sqrt{\varepsilon^2+4(c^2-\varepsilon\delta)}$ and $J(x,v,\varepsilon)$ is the unique complex structure, preserving the standard orientation and round metric on $S^3$ that satisfies
    $$
    J(x,v,\varepsilon)x=\frac{1}{a}\left(v-i\frac{\varepsilon}{2}x\right).
    $$
\end{Proposition}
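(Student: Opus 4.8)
The plan is to prove the identity by the standard ODE strategy: show that the proposed right-hand side solves the linear initial value problem \eqref{e1} with data $\gamma(0)=x$, $\dot\gamma(0)=v$, and then invoke uniqueness of solutions. The crucial preliminary observation is that, since $J(x,v,\varepsilon)^2=-\mathrm{id}$, the matrix exponential is just a rotation in a $2$-plane, so that
\[
e^{J(x,v,\varepsilon)at}x=\cos(at)\,x+\sin(at)\,Jx=\cos(at)\,x+\sin(at)\,w,\qquad w:=\tfrac1a\Big(v-i\tfrac{\varepsilon}{2}x\Big).
\]
After this reduction the complex structure $J$ no longer appears as an operator; only the fixed vector $w=Jx$ survives, and the remaining factor $e^{i\varepsilon t/2}$ is ordinary scalar multiplication on $\C^2$. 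This is the key move, because the two complex structures $i$ (scalar multiplication) and $J$ need not commute, so one must resist treating $e^{i\varepsilon t/2}$ and $e^{Jat}$ as commuting exponentials.

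First I would settle existence and uniqueness of $J$, assuming $a>0$. I claim $\{x,w\}$ is an orthonormal pair in $\R^4$. Indeed $|v-i\tfrac{\varepsilon}{2}x|^2=|v|^2-\varepsilon\,\mathrm{Re}\langle ix,v\rangle+\tfrac{\varepsilon^2}{4}=c^2-\varepsilon\delta+\tfrac{\varepsilon^2}{4}=a^2$, so $|w|=1$; and $\mathrm{Re}\langle w,x\rangle=\tfrac1a\big(\mathrm{Re}\langle v,x\rangle-\tfrac{\varepsilon}{2}\mathrm{Re}\langle ix,x\rangle\big)=0$, since $v\in T_xS^3$ forces $\mathrm{Re}\langle v,x\rangle=0$ and $\langle ix,x\rangle=i$ is purely imaginary. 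It then remains to invoke the elementary fact that in oriented Euclidean $\R^4$ an orthonormal pair $(x,w)$ determines a \emph{unique} orientation-compatible orthogonal complex structure with $Jx=w$: any such $J$ is forced to satisfy $Jw=-x$ on $P=\mathrm{span}_{\R}\{x,w\}$, and on the orthogonal $2$-plane $P^\perp$ it must act as one of the two quarter-turns, exactly one of which is compatible with the standard orientation. This pins down $J=J(x,v,\varepsilon)$.

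With $J$ in hand, set $\tilde\gamma(t):=e^{i\varepsilon t/2}f(t)$ with $f(t):=\cos(at)x+\sin(at)w$, so that $\ddot f=-a^2f$. Differentiating $\tilde\gamma$ twice — where now every step is scalar multiplication commuting with $e^{i\varepsilon t/2}$ — and substituting into \eqref{e1}, all the $\dot f$-terms cancel and the coefficient of $f$ collapses to $\tfrac{\varepsilon^2}{4}-a^2+c^2-\varepsilon\delta$, which vanishes by the definition $a^2=\tfrac{\varepsilon^2}{4}+c^2-\varepsilon\delta$. The initial conditions also match, since $\tilde\gamma(0)=x$ and $\dot{\tilde\gamma}(0)=\tfrac{i\varepsilon}{2}x+aw=v$. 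As \eqref{e1} is a constant-coefficient second-order linear ODE, its solution with data $(x,v)$ is unique, whence $\gamma=\tilde\gamma$. (Equivalently, one can start from the explicit solution $\gamma(t)=e^{i\theta_+t}p_++e^{i\theta_-t}p_-$ of \Cref{Lower bound for magnetic geodesics on S3}: using $\theta_\pm=\tfrac{\varepsilon}{2}\pm a$, $p_++p_-=x$ and $i(p_+-p_-)=w$ one checks $e^{i\theta_+t}p_++e^{i\theta_-t}p_-=e^{i\varepsilon t/2}f(t)$ directly.)

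The main obstacle is conceptual rather than computational: one must not treat $e^{i\varepsilon t/2}$ and $e^{Jat}$ as commuting, since $i$ and $J$ generically do not commute. Collapsing $e^{Jat}x$ to the planar expression $\cos(at)x+\sin(at)w$ at the very start removes this difficulty, after which the verification is routine bookkeeping with $a^2=\tfrac{\varepsilon^2}{4}+c^2-\varepsilon\delta$. Finally, the degenerate case $a=0$, i.e.\ $v=i\tfrac{\varepsilon}{2}x$, falls outside the uniqueness statement for $J$ but is immediate: there $c^2-\varepsilon\delta=-\tfrac{\varepsilon^2}{4}$ and the formula reduces to $\gamma(t)=e^{i\varepsilon t/2}x$, which one checks solves \eqref{e1}, with $J$ playing no role.
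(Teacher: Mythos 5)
Your proof is correct and reaches the same endpoint as the paper's: a direct verification that the ansatz solves \eqref{e1}, combined with the same plane-plus-quarter-turn uniqueness argument for $J$ that the paper gives in the remark preceding its proof. The one genuine difference is in how the derivative is computed. The paper differentiates the operator product directly, writing $\dot\gamma=\left(i\frac{\varepsilon}{2}+Ja\right)\gamma$ and $\ddot\gamma=\left(i\frac{\varepsilon}{2}+Ja\right)^2\gamma$, which uses $[i,J]=0$ twice (to pull $J$ past $e^{i\varepsilon t/2}$ and to cancel the $iJ\varepsilon a$ cross terms); this is justified there by the remark that $J(x,v,\varepsilon)$ lies in the quaternionic $2$-sphere of complex structures commuting with $i$, the sphere containing $j=i\ominus i$. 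So your warning that one ``must resist treating $e^{i\varepsilon t/2}$ and $e^{Jat}$ as commuting exponentials'' does not actually apply to the paper's intended $J$ --- they do commute. Still, your caution is not misplaced: which of the two $2$-spheres of orthogonal complex structures is orientation compatible is a convention-sensitive point (on the basis $(e_1,je_1,e_2,je_2)$ one checks that the sphere containing $j$ induces the orientation \emph{opposite} to the complex orientation of $\C^2$, so the paper's phrase ``preserving the standard orientation \ldots on $S^3$'' is loose exactly where the commutation is needed), and your reduction $e^{Jat}x=\cos(at)x+\sin(at)Jx$ makes the verification depend only on the vector $w=Jx$, hence valid for either family and immune to the convention. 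Beyond this, you supply details the paper leaves implicit, all correctly: orthonormality of $(x,w)$ (without which $J$ need not exist), the initial-condition check $\dot{\tilde\gamma}(0)=\frac{i\varepsilon}{2}x+aw=v$ (the paper only checks the ODE), the explicit appeal to uniqueness for the linear ODE (needed since \Cref{p1} asserts every solution has this form, not merely that the formula yields a solution), and the degenerate case $a=0$; your cross-check against the explicit solution $e^{i\theta_+t}p_++e^{i\theta_-t}p_-$ of \Cref{Lower bound for magnetic geodesics on S3} is also correct, since $\theta_\pm=\frac{\varepsilon}{2}\pm a$, $p_++p_-=x$ and $i(p_+-p_-)=w$.
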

\noindent
Before we prove the proposition, observe that $J(x,v,\varepsilon)$ is indeed unique, as orientation determines it on the orthogonal complement of $\mathrm{span}\lbrace x, v-i\frac{\varepsilon}{2}x\rbrace$. Explicitly we set $J(x,v,\varepsilon)e_1=e_2, J(x,v,\varepsilon)e_2=-e_1$ for $(e_1,e_2)$ any ordered (hence oriented) orthonormal bases of the orthogonal complement of $\mathrm{span}\lbrace x, v-i\frac{\varepsilon}{2}x\rbrace$. Further, $J(x,v,\varepsilon)$ belongs to the 2-sphere of complex structures (commuting with $i$) that comes from identifying $\C^2$ as the quaternionic space.
\begin{proof}
    We simply check that $\gamma(t)=e^{\frac{i\varepsilon}{2}t}e^{J(x,v,\varepsilon)at}x$ satisfies \eqref{e1}. We abbreviate $J=J(x,v,\varepsilon)$ and compute
    $$
    \dot\gamma=\left(i\frac{\varepsilon}{2}+Ja\right)\gamma
    $$
    and
    $$
    \ddot \gamma=\left(i\frac{\varepsilon}{2}+Ja\right)^2\gamma=\left( -\frac{\varepsilon^2}{4}+iJ\varepsilon a-a^2\right)\gamma.
    $$
    This indeed implies 
    \begin{align*}
        \ddot \gamma -i\varepsilon\dot \gamma +(c^2 -\varepsilon \delta)\gamma&=\left( -\frac{\varepsilon^2}{4}+iJ\varepsilon a-a^2-i\varepsilon \left(i\frac{\varepsilon}{2}+Ja\right)+c^2 -\varepsilon \delta\right)\gamma\\
        &=\left ( \frac{\varepsilon^2}{4}-a^2+c^2-\varepsilon\delta\right)\gamma=0.
    \end{align*}
\end{proof}
\noindent
This description of the magnetic geodesics yields a nice visualization (Figure \ref{fig1}) in terms of the Hopf-fibration $\pi: S^3\to S^2$ (for $i$). The factor $e^{\frac{i\varepsilon}{2}t}$ is a rotation in the fibers, while the factor $e^{Jat}$ projects down to a rotation of $S^2$ with rotation axis determined by $J$. Explicitly, the two fixed antipodal points $\pi(x_\pm)$ of the rotation are the projections of solutions to the equation $J x_\pm =\pm i x_\pm$.
\begin{figure}[]
	\centering
 \includegraphics[width=0.3\textwidth]{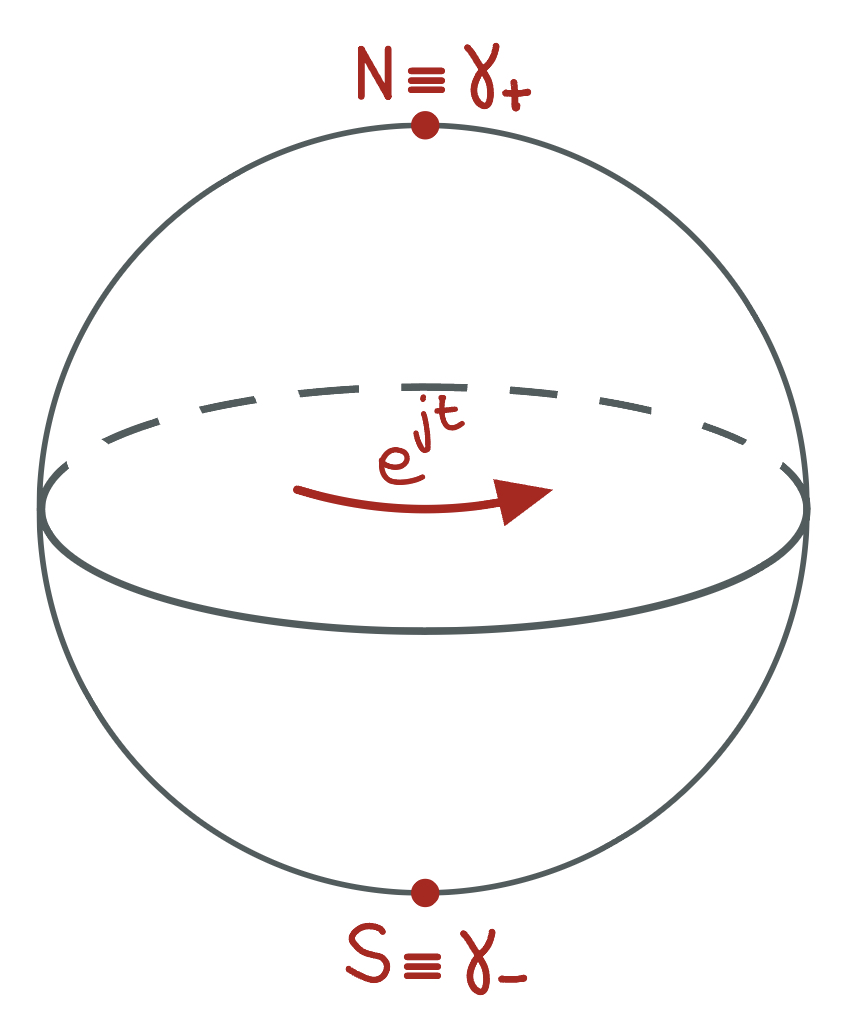}
	\caption{\textit{The figure shows the image of the Hopf projection $\pi: S^3\to S^2$ with respect to the complex structure $i \equiv i\oplus i$ on $\R^4\cong \C^2$. The orbits $\gamma_\pm$ that satisfy $j\gamma_\pm=\pm i\gamma_\pm$ are projected to antipodal points we call $N$ and $S$. The rotation $e^{jt}$ is projected to the rotation that fixes $N$ and $S$. Similarly any rotation of the form $e^{Jt}$ (for a complex structure $J$ commuting with $i$) projects to a rotation fixing the points with fibers determined by $J\gamma_{\pm}=\pm i\gamma_\pm$.}}
    \label{fig1}
\end{figure}
\noindent
We can now prove that the only fast periodic magnetic geodesics on $L(p;1)$ are the quotients of the Reeb orbits over the antipodal points $N, S$.
\begin{Lemma}\label{lemma lense space}
    All non-constant periodic magnetic geodesics on $L(p;1)$ of period $T$ except for the Reeb orbits $\gamma_\pm$ satisfy 
    $$
    T_E\geq \begin{cases} \frac{2\pi}{\vert v\vert} \ \ \text{for} \ \vert v\vert\geq\varepsilon\\
     \frac{2\pi}{\varepsilon} \ \ \text{for} \ \vert v\vert\leq\varepsilon.
    \end{cases}
    $$
\end{Lemma}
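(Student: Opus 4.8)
The plan is to lift a periodic orbit from $L(p;1)$ to $S^3$ and feed it into the explicit description of Proposition~\ref{p1}. A non-constant magnetic geodesic $\bar\gamma$ on $L(p;1)$ of period $T$ lifts to a magnetic geodesic $\gamma\colon[0,T]\to S^3$ that closes up only up to a deck transformation; since by Lemma~\ref{Magnetic flow is invariant under Zp} the $\Z_p$-action consists of symmetries of the magnetic system, there is a $q\in\{0,\dots,p-1\}$ with
\[
\gamma(T)=A\gamma(0),\qquad \dot\gamma(T)=A\dot\gamma(0),\qquad A:=e^{\frac{2\pi}{p}jq}.
\]
For $q=0$ the lift is genuinely $T$-periodic on $S^3$ and the estimate is precisely Lemma~\ref{Lower bound for magnetic geodesics on S3}, so I would focus on $q\neq0$.

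Write $\gamma(t)=e^{\frac{i\varepsilon}{2}t}e^{Jat}x$ with $J=J(x,v,\varepsilon)$ as in Proposition~\ref{p1}, and set $g:=e^{\frac{i\varepsilon}{2}T}e^{JaT}$. Because $J$ and $j$ both commute with $i$, the maps $g$ and $A$ are $\C$-linear for the complex structure $i$. Differentiating gives $\dot\gamma(t)=e^{\frac{i\varepsilon}{2}t}e^{Jat}\bigl(i\tfrac{\varepsilon}{2}+Ja\bigr)x$, so the two closing conditions become $gx=Ax$ and $g\bigl(i\tfrac{\varepsilon}{2}+Ja\bigr)x=A\bigl(i\tfrac{\varepsilon}{2}+Ja\bigr)x$. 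Using $\C$-linearity together with $gx=Ax$, the second condition collapses (for $a\neq0$) to $gJx=AJx$. Hence $B:=A^{-1}g$ fixes both $x$ and $Jx$.

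If $x$ is not on the axis of $J$, i.e.\ $Jx\neq\pm ix$, then $x$ and $Jx$ are $\C$-linearly independent, hence a complex basis of $\C^2$, forcing $B=\mathrm{Id}$ and thus $g=A$. Taking determinants of $\C$-linear maps, $\det g=e^{i\varepsilon T}$ since $J$ is trace free, while $\det A=1$; therefore $e^{i\varepsilon T}=1$, so $T\in\frac{2\pi}{\varepsilon}\Z_{>0}$ and $T\geq\frac{2\pi}{\varepsilon}$. As $\frac{2\pi}{\varepsilon}\geq\frac{2\pi}{|v|}$ whenever $|v|\geq\varepsilon$, this already yields both cases of the claimed bound.

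The remaining case $Jx=\pm ix$ (together with the degenerate possibility $a=0$) is where I expect the real work to lie and where the oddness of $p$ enters. Here $\gamma(t)=e^{i\theta_\pm t}x$ is a Hopf circle, $v$ is parallel to the Reeb direction $ix$, and $|v|=|\theta_\pm|$. Closing up forces $e^{i\theta_\pm T}x=Ax$, i.e.\ $e^{i\theta_\pm T}x_1=e^{\frac{2\pi}{p}iq}x_1$ and $e^{i\theta_\pm T}x_2=e^{-\frac{2\pi}{p}iq}x_2$ in coordinates $x=(x_1,x_2)$. If both components are nonzero this gives $e^{\frac{4\pi}{p}iq}=1$, and since $p$ is odd this forces $q=0$, whence $T=\frac{2\pi}{|v|}$ and the bound holds. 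If exactly one component vanishes, the orbit is the Reeb orbit $\gamma_+$ or $\gamma_-$ over $N$ or $S$, which the statement excludes. The delicate point is exactly this dichotomy: for even $p$ the value $q=p/2$ would create genuine short shortcuts, so the hypothesis that $p$ is odd is what guarantees that the only orbits violating the estimate are the two Reeb orbits $\gamma_\pm$.
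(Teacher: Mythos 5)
Your proof is correct, and it follows the same skeleton as the paper's: lift the orbit to $S^3$, apply the normal form of Proposition~\ref{p1} to the deck-transformation closing condition, and cancel the $a$-dependence multiplicatively. Indeed, your determinant identity $e^{i\varepsilon T}=\det_{\C}(g)=\det_{\C}(A)=1$ is precisely the paper's trick of multiplying the two coordinate entries $z_1,z_2$, since taking the complex determinant in the eigenbasis of $j$ is the same cancellation ($\det e^{JaT}=\det e^{\frac{2\pi}{p}jq}=1$). Where you genuinely diverge is in how you reduce to that computation: the paper first argues geometrically, via the Hopf projection, that an orbit satisfying \eqref{e2} must have $J(x,v,\varepsilon)=j$, and only then computes in coordinates; you bypass this step entirely with the linear-algebra observation that $B=A^{-1}g$ is $\C$-linear and fixes the complex basis $\{x,Jx\}$ whenever $Jx\neq\pm ix$, forcing $g=A$. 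Your version is in two respects more careful than the paper's write-up: you allow an arbitrary deck element $e^{\frac{2\pi}{p}jq}$ rather than normalizing to the generator with time shift $T/p$ as in \eqref{e2}, and you isolate the Reeb-direction orbits $\gamma(t)=e^{i\theta_\pm t}x$ (the axis case $Jx=\pm ix$, together with the degeneration $a=0$, which forces $v=\frac{i\varepsilon}{2}x$ so that your coordinate argument applies verbatim with $\theta=\varepsilon/2$) as the one place where the oddness of $p$ is genuinely used --- the paper compresses this into the clause ``unless $z_1=0$ or $z_2=0$'', leaving the parity hypothesis implicit, while your observation about $q=p/2$ for even $p$ correctly identifies why the lemma would fail there. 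One cosmetic slip: in the axis case with $x_1x_2\neq 0$ you get $T\in\frac{2\pi}{|\theta_\pm|}\Z_{>0}$, hence $T\geq 2\pi/|v|$ rather than equality, which still yields both cases of the bound exactly as you argue.
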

\begin{proof}
    All magnetic geodesics on $L(p;1)$ that lift to closed magnetic geodesic on $S^3$ satisfy the bound by Lemma \ref{Lower bound for magnetic geodesics on S3}. Therefore let $\gamma$ be a magnetic geodesic of period $T$ on $S^3$ that satisfies 
    \begin{equation}\label{e2}
        e^{j\frac{2\pi}{p}}\gamma(t)=\gamma(t+T_E/p),
    \end{equation}
    which is equivalent to projecting to a short magnetic geodesic on $L(p;1)$. As $[j,i]=0$ this implies that $e^{j\frac{2\pi}{p}}$ projects under the Hopf map to a rotation of $S^2$ that preserves the geodesic circle $\pi(\gamma)$, which implies $J(x,v,\varepsilon)=j$. Evaluating equation \eqref{e2} at $t=0$ we find
    $$
    (e^{i\frac{2\pi}{p}}z_1,e^{-i\frac{2\pi}{p}}z_2)=e^{i\frac{\varepsilon}{2}\frac{T_E}{p}}(e^{\frac{iaT_E}{p}}z_1,e^{\frac{-iaT_E}{p}}z_2),
    $$
    where $\gamma(0)=:(z_1,z_2)$. Multiplying both entries yields $e^{i\varepsilon \frac{T_E}{p}}z_1z_2=z_1z_2$ which implies
    $$
    \frac{T_E}{p}=\frac{2\pi k}{\varepsilon}\geq\frac{2\pi}{\varepsilon}
    $$
    unless $z_1=0$ or $z_2=0$ which are precisely the orbits $\gamma_\pm$.
\end{proof}

\noindent
\textbf{Step 2: Adding a potential -- magnetic billiards}\\
In order to construct an admissible Hamiltonian on (an open set of) $D_1L(p;1)$ we need to modify $E$, so that all non-constant periodic orbits have period $T>1$. Recall that  $\gamma_\pm$ are the only periodic orbits that will cause problems, so we will try to cut them out using a potential. Again we first construct the potential on $S^3$ and later argue that it descends to $L(p;1)$. If 
$$
\pi: S^3(1)\to S^2\left (\frac{1}{2}\right)
$$
denotes the Hopf-map with respect to $i$, these two orbits $\gc_{\pm}$ project to antipodal points. We call them $N$ and $S$. Now the rotation $e^{jt}$ projects under $\pi$ to the rotation of $S^2(\frac{1}{2})$ fixing $N$ and $S$ (see Figure \ref{fig1}). We want to modify $E$ by adding a potential $ V_\varepsilon:L(p;1)\setminus \gamma_\pm\to \R$ that only depends on the distance $d(x,\gamma_\pm)$, goes to infinity near $\gamma_\pm$ and is zero outside the $\varepsilon$ neighborhood of $\gamma_\pm$. 

\begin{Proposition} \label{prop potential}
    All periodic orbits of $E+V_\varepsilon$ on $TS^3$ have period
    $$
    T_{E+V_\varepsilon}\geq \begin{cases} \frac{2\pi}{\max\vert v\vert} \ \ &\text{for} \ \max\vert v\vert\geq\sqrt \varepsilon\\
     \frac{2\pi}{\varepsilon} \ \ &\text{for} \ \max\vert v\vert\leq\sqrt\varepsilon
    \end{cases}-\mathcal{O}(\varepsilon).
    $$
    Here $\max\vert v\vert:=\max_t\vert \dot\gamma(t)\vert$ refers to the maximal velocity along the periodic orbit. Observe that $E(\gamma,\dot\gamma)+V_\varepsilon(\gamma)=\frac{1}{2}\max\vert v\vert^2$ as the potential vanishes where the velocity is maximal.
\end{Proposition}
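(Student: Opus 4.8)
My plan is to reduce the flow of $E+V_\varepsilon$ to a one–dimensional problem by exploiting the symmetries that survive the perturbation, and then to split the periodic orbits according to whether or not they feel the potential. The starting point is that $V_\varepsilon$ depends only on the distance to $\gamma_\pm$, which is invariant under the Reeb rotation $x\mapsto e^{it}x$ and under the rotation $x\mapsto e^{jt}x$ fixing the Hopf link; both are isometries preserving $\alpha$ (cf.\ the computation in Lemma \ref{Magnetic flow is invariant under Zp}). Hence $E+V_\varepsilon$ keeps the torus symmetry of the magnetic system, and in particular the energy and $\delta=\mathrm{Re}\langle ix,v\rangle$ stay conserved. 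Since $V_\varepsilon$ vanishes where $|v|$ is maximal, the energy equals $\tfrac12\max|v|^2$, so on the open set $\{V_\varepsilon=0\}$ the orbit is a genuine magnetic geodesic of speed $c:=\max|v|$; by Proposition \ref{p1} each such arc is of the form $e^{\frac{i\varepsilon}{2}t}e^{Jat}x$ with the \emph{same} $a=\tfrac12\sqrt{\varepsilon^2+4(c^2-\varepsilon\delta)}$.

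First I would dispose of the orbits that never enter the potential region. These are closed magnetic geodesics of speed $c$, so Lemma \ref{Lower bound for magnetic geodesics on S3} applies verbatim and gives $T\ge 2\pi/c$ for $c\ge\varepsilon$ (and $T\ge 2\pi/\varepsilon$ for $c\le\varepsilon$), which already yields the asserted bound in the first regime. The remaining, and genuinely new, case is that of orbits meeting an $\varepsilon$-neighbourhood of $\gamma_\pm$. Here I would pass to Hopf coordinates $(\chi,\phi_1,\phi_2)$ adapted to the torus action, in which $\gamma_+=\{\chi=0\}$ and $\gamma_-=\{\chi=\pi/2\}$, the angles $\phi_1,\phi_2$ are cyclic with conserved momenta $p_1,p_2$, and the energy relation collapses to the effective one–dimensional system $\tfrac12\dot\chi^2+U(\chi)=\tfrac12c^2$ with $U(\chi)=\tfrac{p_1^2}{2\cos^2\chi}+\tfrac{p_2^2}{2\sin^2\chi}+V_\varepsilon(\chi)+\text{const}$. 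The centrifugal barriers together with $V_\varepsilon$ confine $\chi$ to an interval $[\chi_-,\chi_+]$ bounded away from the poles.

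The period is then $T=nT_\chi$, where $T_\chi$ is the period of one $\chi$-oscillation and $n$ is the smallest integer for which $\phi_1,\phi_2$ simultaneously return modulo $2\pi$. A direct computation of the winding per oscillation (in the region $V_\varepsilon=0$ this integral evaluates to $\pi\,\mathrm{sgn}(p_i)-\tfrac\varepsilon2 T_\chi$) shows that the small magnetic defect $\tfrac\varepsilon2 T_\chi$ forces the two closing conditions to resynchronise only after a time of order $2\pi/\varepsilon$, so that every genuinely oscillating orbit is slow, while the degenerate (circular) orbits reproduce the bound $2\pi/c$ of Lemma \ref{Lower bound for magnetic geodesics on S3}. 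When $[\chi_-,\chi_+]$ reaches into $\{V_\varepsilon>0\}$ one replaces the winding integral near the relevant pole by its value in the presence of $V_\varepsilon$; since the potential occupies a region of width $\mathcal O(\varepsilon)$ and is crossed in time $\mathcal O(\varepsilon)$, its only net effect on the period is the $-\mathcal O(\varepsilon)$ correction, and an orbit trapped near $\gamma_\pm$ executes cyclotron motion of period $\sim 2\pi/\varepsilon$.

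I expect the main obstacle to be exactly the analysis of the orbits that interact with the potential: one must show that the time spent inside $\{V_\varepsilon>0\}$ is genuinely $\mathcal O(\varepsilon)$ even though the speed drops to zero at the turning point (this is where the precise steepness and shape of $V_\varepsilon$ must be used), and one must pin down the crossover value $\max|v|=\sqrt\varepsilon$. The threshold should come from balancing the three competing length scales near a pole — the width $\varepsilon$ of the potential, the centrifugal barrier $p_i^2/\chi^2$, and the cyclotron scale fixed by $\varepsilon$ — and turning this balance into a uniform estimate over all orbits is the crux. Finally I would check that the bound is stable under the $\mathcal O(\varepsilon)$ shift of the zero section \eqref{shift of zero section}, so that it descends to $(D_1S^3,\dd\lambda)$ as needed.
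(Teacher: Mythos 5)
Your reduction to the effective one--dimensional system in the coordinates of Appendix \ref{A} is sound, and for orbits that never meet the potential your resynchronisation argument is correct and genuinely different in flavour from the paper: indeed, writing a smooth magnetic geodesic as $\gamma(t)=e^{i\theta_+t}p_++e^{i\theta_-t}p_-$ with $\theta_++\theta_-=\varepsilon$, a two-frequency (oscillating) orbit closes only if $e^{i\theta_\pm T}=1$, hence $e^{i\varepsilon T}=1$ and $T\geq 2\pi/\varepsilon$, while the single-frequency (circular) orbits close with $T=2\pi/\theta_\pm\geq 2\pi/c$; this recovers the dichotomy of Lemma \ref{Lower bound for magnetic geodesics on S3} and is consistent with your winding count $\pm\pi\pm\tfrac{\varepsilon}{2}T_\chi$ per $\chi$-oscillation. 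The paper instead argues by a limiting step (periodic orbits of $E+V_\varepsilon$ with bounded period converge, as $\varepsilon\to 0$, to magnetic \emph{bounce} orbits on the billiard table $S^3\setminus\gamma_\pm$, or are trapped; see Appendix \ref{B}) combined with the reflection-symmetry Lemma \ref{lemma bounce orbits}.

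The genuine gap is precisely the case you dismiss with ``its only net effect on the period is the $-\mathcal{O}(\varepsilon)$ correction.'' Once an orbit hits the wall, the winding per $\chi$-oscillation is no longer $\pi\,\mathrm{sgn}(p_i)\pm\tfrac{\varepsilon}{2}T_\chi$: the bounce truncates the close approach to the pole, where for small $p_i$ an $O(1)$ fraction of the winding $\pm\pi$ is concentrated, so the per-oscillation winding becomes a continuously varying quantity $\Phi_i(c,p_1,p_2)$ and the quantized obstruction $n\tfrac{\varepsilon}{2}T_\chi\in\pi\Z\setminus\{0\}$, which made all smooth oscillating orbits slow, disappears. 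And it must disappear: the bounce orbits of types (2) and (3) in Figure \ref{types} are periodic with $T\approx 2\pi/c$, so your conclusion ``every genuinely oscillating orbit is slow'' fails for them, and your scheme as written provides no lower bound at all in the bouncing case --- one would have to prove $nT_\chi\geq 2\pi/c-\mathcal{O}(\varepsilon)$ whenever the closing conditions $n\Phi_i\in 2\pi\Z$ hold with bounces. The paper's substitute for this missing step is Lemma \ref{lemma bounce orbits}: reflecting at a suitable plane at each bounce (preserving $\delta$, reversing the contact component) matches every periodic bounce orbit with a smooth periodic magnetic geodesic of the \emph{same} period, to which Lemma \ref{Lower bound for magnetic geodesics on S3} applies; the non-trapped orbits of $E+V_\varepsilon$ are then compared to bounce orbits via the $\alpha_\varepsilon=\mathcal{O}(\varepsilon)$ estimate of Appendix \ref{B}. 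Your trapped case is likewise asserted rather than proved: a trapped orbit with $\delta=0$ creeping along the wall through many bounces is not cyclotron motion, and the paper only obtains $T\geq 2\pi/\sqrt{\varepsilon}$ there (using the radius formula $R_{c,\delta,\varepsilon}$ from \cite{ABM} to force $c\leq\varepsilon^{3/2}/2$, then a length-over-speed estimate) --- which is why the cutoff in Corollary \ref{periodlens} reads $2\pi/\sqrt{\varepsilon}$; your claimed $2\pi/\varepsilon$ for all trapped orbits is stronger than what the paper's method yields, and you give no derivation of it, nor of the $\sqrt{\varepsilon}$ threshold, which you yourself flag as open.
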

\begin{Notation}
    From now on we will use the term $\mathcal{O}(\varepsilon)$ by abuse of notation as a term that tends to zero as $\varepsilon\to 0$, but is not necessarily of order $\varepsilon$.
\end{Notation}
\noindent
The idea of the proof is very geometric: If $\gamma_\varepsilon$ is a sequence of periodic orbits of $E+V_\varepsilon$, then the $H^1$-limit of $\gc_{\varepsilon}$ as $\varepsilon\to 0$ is either a magnetic bounce orbit on the billiard table $S^3\setminus\gamma_\pm$ or an orbit trapped near the boundary of the billiard table $\gamma_\pm$. (For a precise definition we refer to \Cref{B}.) In particular, when the periods of $\gamma_\varepsilon$ are bounded uniformly in $\varepsilon$, their periods converge to the periods of the bounce orbits.

\begin{figure}[h]
	\centering
 \includegraphics[width=0.8\textwidth]{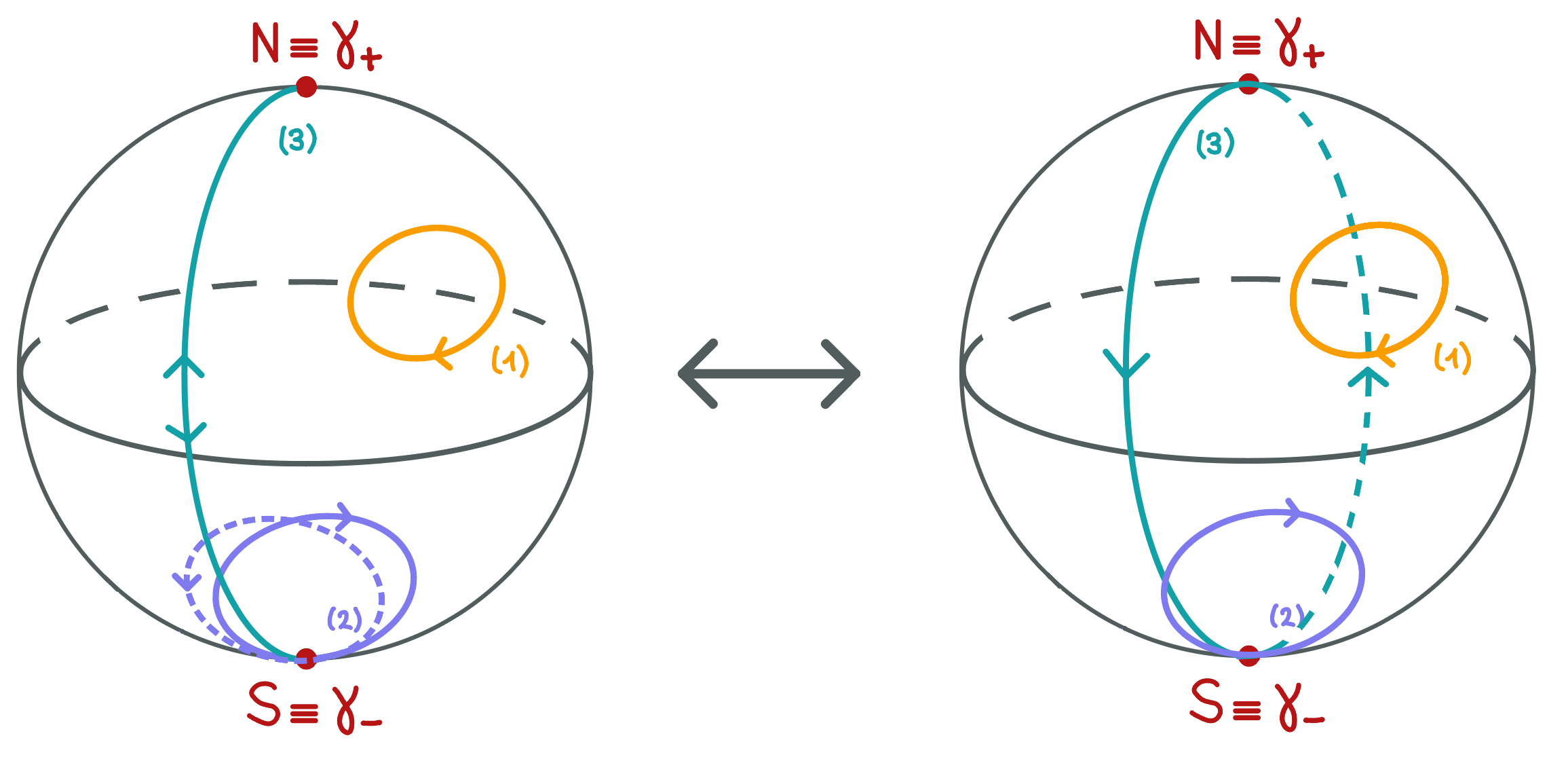}
	\caption{\textit{There are three types of bounce orbits: (1) no bounce, (2) bounce at exactly one pole, (3) bounce at both poles. At every bounce the velocity in contact direction is reversed, while $\delta$, the velocity in Reeb direction, is preserved. In particular as depicted on the right, every periodic bounce orbit corresponds by symmetry to a periodic magnetic geodesic (without bounces). Here symmetry means a reflection at a suitable plane.}}
    \label{types}
\end{figure}

\begin{Lemma}\label{lemma bounce orbits}
    All periodic magnetic bounce orbits $\gamma_b$ on $S^3$ have the same period as a corresponding smooth orbit $\gamma$, i.e. $T_{\text{bounce}}(\gamma_b)=T_E(\gamma)$.
\end{Lemma}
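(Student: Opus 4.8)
The plan is to make precise the mechanism indicated in Figure~\ref{types}: a periodic bounce orbit is a closed concatenation of smooth magnetic-geodesic arcs meeting the link $\gamma_+\cup\gamma_-$, and reflecting these arcs across the links unfolds the broken trajectory into a single smooth magnetic geodesic of the same length. First I would record what survives a collision. Since $V_\varepsilon$ depends only on the distance to $\gamma_\pm$, in the hard limit $\varepsilon\to 0$ each arc is an honest solution of \eqref{e1} of a fixed speed $c$ that meets $\gamma_\pm$ transversally, and at the collision the velocity component normal to $\gamma_\pm$ — which is exactly the contact component $v_\xi\in\xi_b$ — is reversed, while the Reeb component is left untouched. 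Thus both integrals $c=|v|$ and $\delta=\mathrm{Re}\langle i\gamma,\dot\gamma\rangle$ are preserved across every bounce, so all arcs share the same $c,\delta$ and the same frequencies $\theta_\pm=\tfrac{\varepsilon}{2}\pm a$ of Proposition~\ref{p1}.

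Next I would identify the unfolding symmetry. For a collision point $b\in\gamma_+$ the unitary involution $R_+\colon(z_1,z_2)\mapsto(z_1,-z_2)$ (and likewise $R_-\colon(z_1,z_2)\mapsto(-z_1,z_2)$ for $\gamma_-$) fixes $\gamma_+$ pointwise, preserves the round metric and the contact form $\alpha$, and therefore maps magnetic geodesics of strength $\varepsilon$ to magnetic geodesics of strength $\varepsilon$; moreover its differential on $T_bS^3$ is the identity on the Reeb line and $-\mathrm{id}$ on $\xi_b$, i.e. precisely the collision map $\Phi$. (These are the only isometries fixing a link component pointwise and respecting the contact structure, which is why the ``reflection'' of Figure~\ref{types} must be taken to be this $\pi$-rotation.) Because $\Phi^2=\mathrm{id}$, the reflected arc $R_+\circ\alpha_{k+1}$ is the smooth magnetic-geodesic continuation of $\alpha_k$: it takes the same value $b$ and velocity $\dot\alpha_k^-$ at the collision time. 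Straightening the corners one by one, I obtain on $[0,T_{\mathrm{bounce}}]$ a single smooth magnetic geodesic $\gamma$ given on the $k$-th arc by $R_{s_1}\cdots R_{s_{k-1}}\circ\alpha_k$, where $s_j\in\{+,-\}$ records the component hit at the $j$-th collision. Since every $R_\pm$ is an isometry it preserves speed and hence elapsed time, so $\gamma$ runs for total time $T_{\mathrm{bounce}}(\gamma_b)$ at speed $c$, and $\gamma(t+T_{\mathrm{bounce}})=G\,\gamma(t)$ for the monodromy $G=R_{s_1}\cdots R_{s_n}$.

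The main obstacle is the closing-up step. The maps $R_+,R_-$ commute and satisfy $R_+^2=R_-^2=\mathrm{id}$ and $R_+R_-=-\mathrm{id}$, so $G$ lies in the Klein four-group $\{\mathrm{id},R_+,R_-,-\mathrm{id}\}$ and equals $R_+^{\,n_+}R_-^{\,n_-}$, where $n_\pm$ counts the collisions with $\gamma_\pm$. If $G=\mathrm{id}$ then $\gamma$ is genuinely $T_{\mathrm{bounce}}$-periodic and $T_E(\gamma)=T_{\mathrm{bounce}}(\gamma_b)$, as desired; if $G\neq\mathrm{id}$ the explicit form $\gamma(t)=e^{i\theta_+t}p_++e^{i\theta_-t}p_-$ shows that $\gamma$ would close only after $2T_{\mathrm{bounce}}$, which must therefore be ruled out. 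Hence the crux is to prove that $n_+$ and $n_-$ are even. I would argue this through the trichotomy of Figure~\ref{types}: the times of closest approach to $\gamma_+$ and to $\gamma_-$ are the equally spaced maxima of $|z_1(t)|^2$ and $|z_2(t)|^2$, with spacing governed by $\theta_+-\theta_-=2a$; this pins down the collision pattern, shows that a single-collision loop would force $\dot\gamma$ into the Reeb line — i.e. one of the excluded orbits $\gamma_\pm$ — and should force the collisions at each component to occur in pairs exchanged by the involution $R_\pm$. Granting $G=\mathrm{id}$, the unfolded $\gamma$ is a smooth closed magnetic geodesic of speed $c$ with $T_E(\gamma)=T_{\mathrm{bounce}}(\gamma_b)$, and feeding $\gamma$ into Lemma~\ref{Lower bound for magnetic geodesics on S3} produces the period bound needed in Proposition~\ref{prop potential}. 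The two points I expect to demand the most care are the precise collision law in the $\varepsilon\to 0$ limit and this parity/closing-up argument.
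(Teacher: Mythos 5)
Your setup — the hard-wall collision law ($-\mathrm{id}$ on $\xi_b$, identity on the Reeb line), the unitary involutions $R_\pm$ realizing it, and the unfolding with monodromy $G=R_{s_1}\cdots R_{s_n}$ in the Klein four-group — is a faithful and correct rigorization of what the paper leaves as a two-line appeal to the type classification of Figure \ref{types} and ``reflection at a suitable plane''. But the step you yourself flag as the crux fails: the parity claim is false, and $G=-\mathrm{id}$ cannot be ruled out. Concretely, take $p_+=\tfrac12(1,1)$, $p_-=\tfrac12(1,-1)$, so that $\langle p_+,p_-\rangle=0$, $|p_\pm|^2=\tfrac12$, hence $\delta=\theta_+|p_+|^2+\theta_-|p_-|^2=\tfrac{\varepsilon}{2}$ and $c^2=\tfrac{\varepsilon^2}{4}+a^2$. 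Then $z_2(t)=\tfrac12\bigl(e^{i\theta_+t}-e^{i\theta_-t}\bigr)$ vanishes iff $2at\equiv 0 \pmod{2\pi}$ and $z_1(t)$ vanishes iff $2at\equiv\pi \pmod{2\pi}$: this magnetic geodesic meets $\gamma_+$ and $\gamma_-$ \emph{alternately} at time spacing $\pi/(2a)$. Choosing $a=\varepsilon/(4m)$ makes $\theta_\pm/a=2m\pm1$ odd, so $\gamma(t+\pi/a)=-\gamma(t)$; folding the arc on $[\pi/(2a),\pi/a]$ by $R_-$ yields a genuine periodic bounce orbit of period $T_b=\pi/a$ with exactly one bounce on each link component (velocities match: both sides give $\dot\gamma_b(0)=\tfrac{i}{2}(\varepsilon,2a)$). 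Here $n_+=n_-=1$, $G=R_+R_-=-\mathrm{id}$, and the unfolded smooth geodesic has minimal period $2T_b$, not $T_b$. So ``the collisions occur in pairs'' is simply not true, and a proof that tries to force $G=\mathrm{id}$ is chasing a false statement. (A side slip: $R_\pm$ are not the only isometries fixing a link component pointwise and preserving $\alpha$ — all $\mathrm{diag}(1,e^{i\phi})$ do; what singles out $R_+$ is that its differential is $-\mathrm{id}$ on $\xi_b$, which is the part of your argument that actually matters.)

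The gap is repairable, because the exceptional family is very rigid and your own analysis already corners it. Your eigenvector argument correctly kills odd monodromy $R_\pm$ (bounces at a single component must be even in number), and the $|z_1|^2,|z_2|^2$ oscillation analysis shows an orbit meeting both components must have $|(p_\pm)_1|=|(p_\pm)_2|=\tfrac12$, $\delta=\tfrac{\varepsilon}{2}$, with strictly alternating bounces; so the only surviving bad case is alternating with $n_+=n_-=k$ odd, $G=-\mathrm{id}$. For these, $T_b=k\pi/a$ and closure forces $\tfrac{\varepsilon}{2a}=\tfrac{2j}{k}$ for some integer $j\geq1$, whence $T_b=4\pi j/\varepsilon$ and $c^2=\tfrac{\varepsilon^2}{4}+a^2$; one checks directly that the period bounds of Lemma \ref{Lower bound for magnetic geodesics on S3} needed in Proposition \ref{prop potential} hold with room to spare ($k=1$ forces $a\leq\varepsilon/4$, the slow regime, with $T_b\geq4\pi/\varepsilon>2\pi/\varepsilon$; for odd $k\geq3$ one has $T_b=\tfrac{k}{2}\cdot\tfrac{2\pi}{a}>\tfrac{2\pi}{c}$). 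In other words: instead of excluding $G=-\mathrm{id}$, handle that family explicitly, either by the direct estimate above or by pairing the bounce orbit with the doubled smooth orbit ($-\mathrm{id}$ lies in the Hopf circle action, so the projection to $S^2$ does close at time $T_b$, which is the geometric content of the paper's picture). Note this also shows the lemma's literal equality $T_{\text{bounce}}(\gamma_b)=T_E(\gamma)$ needs the loose reading of ``corresponding'' that the paper's sketch tacitly uses; what the application in Proposition \ref{prop potential} actually requires, and what your corrected argument delivers, is the lower period bound.
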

\begin{proof}
    As shown in Figure \ref{types} there are three types of bounce orbits. Type (1) is a smooth orbit and thus satisfies the period bound by Lemma \ref{Lower bound for magnetic geodesics on S3}. Type (2) bounces exactly at one of the poles. If it is periodic, its period is by symmetry the same as the orbit of the corresponding smooth orbit. Type (3) bounces at both poles and again using symmetries one can sees that its period is the same as the period of the corresponding smooth orbit.
\end{proof}
\noindent
The detailed proof of Proposition \ref{prop potential} is written in Appendix \ref{B}. It follows the strategy above arguing that for small $\varepsilon$ any periodic orbit must either be close to a periodic bounce orbit or trapped near the poles. In the first case one obtains the bound from Lemma \ref{lemma bounce orbits}, in the second case from using some integrals of motion that are derived in Appendix \ref{A}.\begin{Remark}
    At this point we also remark that in principle it would be also possible to construct the magnetic bounce orbits with the approximating scheme of magnetic bounce orbits developed by Vocke in her thesis \cite{V21}, which is a generalization of earlier work of Albers-Mazzucchelli \cite{AM11}. 
\end{Remark}
\noindent
The potential $V_\varepsilon$ is $\Z_p$-invariant, therefore the Hamiltonian $E+V_\varepsilon$ descends to the lens space.

\begin{Corollary}\label{periodlens}
    All periodic orbits of $E+V_\varepsilon$ on $TL(p;1)$ have period
    $$
    T_{E+V_\varepsilon}\geq \begin{cases} \frac{2\pi}{\max\vert v\vert} \ \ &\text{for} \ \max\vert v\vert\geq\sqrt\varepsilon\\
     \frac{2\pi}{\sqrt\varepsilon} \ \ &\text{for} \ \max\vert v\vert\leq\sqrt\varepsilon
    \end{cases}-\mathcal{O}(\varepsilon).
    $$
\end{Corollary}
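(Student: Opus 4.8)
The plan is to lift periodic orbits from the quotient to $S^3$ and reduce everything to Proposition \ref{prop potential}. Since $V_\varepsilon$ is $\Z_p$-invariant, $E+V_\varepsilon$ descends to $TL(p;1)$ and, conversely, any non-constant periodic orbit $\gamma$ of period $T$ lifts to an orbit $\tilde\gamma$ of $E+V_\varepsilon$ on $TS^3$; as $S^3\to L(p;1)$ is a local isometry the maximal velocity $\max\vert v\vert$ is preserved, and as the deck group is $\Z_p$ there is some $g=e^{2\pi jk/p}\in\Z_p$ with $\tilde\gamma(t+T)=g\,\tilde\gamma(t)$. Writing $m$ for the order of $g$, the lift is genuinely $(mT)$-periodic on $S^3$ with $m\mid p$. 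If $m=1$ the lift is already $T$-periodic, Proposition \ref{prop potential} applies verbatim, and since the fast-case bounds agree while $2\pi/\varepsilon\ge 2\pi/\sqrt\varepsilon$ for $\varepsilon\le 1$, the resulting $S^3$-bound is stronger than the asserted $L(p;1)$-bound; this case is immediate.

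The case $m>1$, where the lift closes only after winding nontrivially under $\Z_p$ (this includes the perturbations of the short Reeb orbits $\gamma_\pm$), is the crux. Applying Proposition \ref{prop potential} to the $(mT)$-periodic lift only gives $T\ge\frac1m(\text{bound})\ge\frac1p(\text{bound})$, which is too weak in the fast regime $\max\vert v\vert\ge\sqrt\varepsilon$, so instead I would exploit the symmetry $\tilde\gamma(t+T)=g\,\tilde\gamma(t)$ directly, in the spirit of Lemma \ref{lemma lense space}. Following the proof of Proposition \ref{prop potential}, as $\varepsilon\to0$ the orbit $\tilde\gamma$ converges in $H^1$ to a magnetic bounce orbit, or to an orbit trapped near $\gamma_\pm$; the winding condition forces the rotation axis of the limit to be the Hopf axis, i.e.\ $J(x,v,\varepsilon)=j$, exactly as in the derivation of \eqref{e2}. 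Replacing the bounce orbit by a period-equivalent smooth magnetic geodesic via Lemma \ref{lemma bounce orbits} and running the computation of Lemma \ref{lemma lense space} then forces $\varepsilon T$ to be a positive multiple of $2\pi$, whence $T\ge 2\pi/\sqrt\varepsilon-\mathcal{O}(\varepsilon)$. Since $\max\vert v\vert\ge\sqrt\varepsilon$ gives $2\pi/\sqrt\varepsilon\ge 2\pi/\max\vert v\vert$, this single estimate dominates both cases of the claimed bound.

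The hard part is making this limiting argument rigorous: one must verify that the $\Z_p$-symmetry of $\tilde\gamma$ survives in the $H^1$-limit and is compatible with the reflection symmetry underlying Lemma \ref{lemma bounce orbits}, and that orbits trapped near $\gamma_\pm$ cannot wind without again acquiring period of order $2\pi/\sqrt\varepsilon$, the latter controlled by the integrals of motion derived in Appendix \ref{A}. The appearance of $\sqrt\varepsilon$ rather than $\varepsilon$ in the slow case is exactly the price of this bounce-and-trapping analysis and is already visible in the threshold of Proposition \ref{prop potential}; the preservation of $\max\vert v\vert$, the comparison $2\pi/\varepsilon\ge 2\pi/\sqrt\varepsilon$, and the bookkeeping of the $\mathcal{O}(\varepsilon)$-errors are all routine.
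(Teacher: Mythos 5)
Your opening reduction --- lifting a periodic orbit to $S^3$, splitting according to whether the deck element $g=e^{2\pi jk/p}$ is trivial, disposing of $m=1$ via Proposition \ref{prop potential}, and handling equivariant lifts that never meet the potential by the computation of Lemma \ref{lemma lense space} --- is exactly the paper's setup, including the remark that the change of cutoff from $\varepsilon$ to $\sqrt{\varepsilon}$ is harmless. The gap is in your treatment of equivariant orbits that interact with $V_\varepsilon$. Your pivotal step --- replace the limiting $\Z_p$-symmetric bounce orbit by a ``period-equivalent smooth magnetic geodesic'' via Lemma \ref{lemma bounce orbits} and then run the computation of Lemma \ref{lemma lense space} on it --- does not go through. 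The smooth orbit in Lemma \ref{lemma bounce orbits} is produced by reflecting arcs in a plane, and reflection does not commute with the deck action: the resulting smooth geodesic has no reason to satisfy the equivariance \eqref{e2}, so you cannot conclude $J(x,v,\varepsilon)=j$ for it, and the product trick $e^{i\varepsilon T_E/p}z_1z_2=z_1z_2$ is unavailable. (Indeed, a bounce orbit is a concatenation of arcs each with its \emph{own} complex structure $J$, so ``the rotation axis of the limit'' is not even well-defined.) The paper argues in the opposite direction at this point: bounce orbits of types (2) and (3) are \emph{never} $\Z_p$-symmetric (Figure \ref{symmetric}), so a non-trapped $\Z_p$-symmetric configuration must consist of at least $p$ magnetic arcs and its length tends to infinity --- these orbits are excluded from the fast spectrum outright, rather than assigned a period $2\pi k/\varepsilon$.

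The trapped case is where your conclusion is not merely unproven but overshoots what is true. You assert a uniform bound $T\geq 2\pi/\sqrt{\varepsilon}-\mathcal{O}(\varepsilon)$ for \emph{every} $m>1$ orbit and let this single estimate dominate both cases. But a $\Z_p$-symmetric orbit trapped near a cap and nearly parallel to the Reeb direction can have $\max\vert v\vert$ of order one, and for these the paper proves (and needs) only $T_{E+V_\varepsilon}\geq p\cdot\frac{2\pi}{\max\vert v\vert}$ on $S^3$, i.e.\ $\frac{2\pi}{\max\vert v\vert}$ downstairs, via the arc count: a trapped symmetric configuration consists of at least $p$ magnetic arcs, hence winds at least $p$ times in the Reeb direction. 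When $\max\vert v\vert\gg\sqrt{\varepsilon}$ this is far below your claimed $2\pi/\sqrt{\varepsilon}$, and your pointer to Appendix \ref{A} cannot close the hole: the integrals of motion there give $T\geq 2\pi/\vert\delta\vert\geq 2\pi/\max\vert v\vert$ when $\delta\neq 0$, and the radius estimate forcing $c\leq\varepsilon^{3/2}$ (whence $T\geq 2\pi/\sqrt{\varepsilon}$) is derived \emph{only under the assumption} $\delta=0$. So your case analysis should be restructured along the paper's lines: Lemma \ref{lemma lense space} for purely magnetic equivariant orbits, length blow-up for non-trapped symmetric bounce configurations, and the $p$-arc winding bound $T\geq p\cdot\frac{2\pi}{\max\vert v\vert}$ for trapped ones; the two-case form of the stated bound, with the weaker estimate $\frac{2\pi}{\max\vert v\vert}$ in the fast regime, is not an artifact but reflects exactly these trapped orbits.
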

\begin{proof}
We need to check that $\Z_p$-invariant orbits on $S^3$ have period at least $p$ times the above bound. A $\Z_p$-invariant orbit is either a magnetic geodesic as in Lemma \ref{lemma lense space} and therefore satisfy the period bound. Note that for $\varepsilon \leq \max\vert v\vert \leq \sqrt{\varepsilon}$ we have $\frac{2\pi}{\max\vert v\vert}\geq \frac{2\pi}{\sqrt{\varepsilon}}$ and for $\max\vert v\vert \leq\varepsilon$ we have $\frac{2\pi}{\varepsilon}\geq\frac{2\pi}{\sqrt{\varepsilon}}$. So the change of the cutoff from $\varepsilon$ to $\sqrt{\varepsilon}$ does not course any problems. If the $\Z_p$-invariant orbit interferes with the potential (i.e. not purely magnetic geodesic) it is a $\Z_p$-symmetric configuration of magnetic geodesic arcs (see Figure \ref{symmetric}). If it is not trapped near the caps this means it must have length tending to infinity as the bounce orbits are not $\Z_p$-symmetric. If the orbit is trapped near a cap, it also must consists of at least $p$ magnetic geodesic arcs. This means it has winding number at least $p$ in Reeb direction and therefore $T_{E+V_\varepsilon}\geq p\cdot \frac{2\pi}{\max \vert v\vert}$ on $S^3$.
\end{proof}
\begin{figure}[h]
	\centering
 \includegraphics[width=0.35\textwidth]{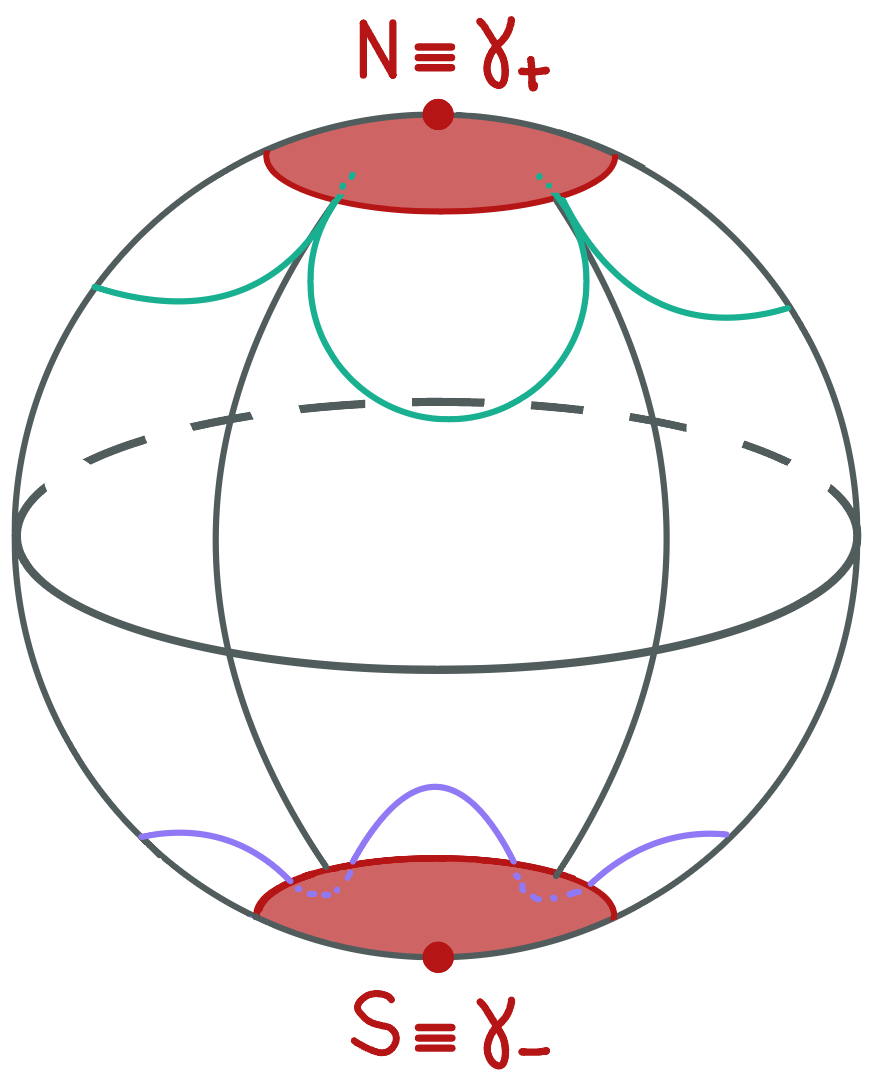}
	\caption{\textit{The vertical grey lines indicate the fundamental domain. A $\Z_p$-symmetric periodic orbit needs to consist of at least $p$ magnetic geodesic arcs, this is what we refer to as $\Z_p$-symmetric configuration. Two examples are illustrated above.}}
    \label{symmetric}
\end{figure}
\noindent
\textbf{Step 3: Constructing an admissible Hamiltonian}\\
We can now reparametrize the magnetic billiard dynamics to write down a Hamiltonian with no fast periodic orbits.

\begin{Proposition}\label{lower bound period of Hamiltonian}
    We define the following Hamiltonian to reparametrize the dynamics induced by $E+V_\varepsilon$, 
    $$
    H_\varepsilon: D_{1-2\varepsilon}L(p;1)\to \R;\ (x,v)\mapsto h_{\varepsilon}\left(E(x,v)+V_\varepsilon(x) \right)
    $$
 where $h_{\varepsilon}(y)$ is given by $$
    h_{\varepsilon}:[0,\infty)\longrightarrow \R: y\mapsto 2\pi \begin{cases}
    \frac{y}{\sqrt{\varepsilon}}+\frac{\sqrt{\varepsilon}}{2}, & \textit{ for }  y\leq \frac{\varepsilon}{2}\\
          \sqrt{2y},& \textit{ for }  y \geq \frac{\varepsilon}{2}
    \end{cases}.$$
    Then periodic orbits $(\gamma, \dgc)$ with respect to $\dd\lambda-\varepsilon\pi^*\dd\alpha$ have periods
    $$
    T_{H_\varepsilon}\geq 1-\mathcal{O}(\varepsilon).
    $$
\end{Proposition}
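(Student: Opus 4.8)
The plan is to exploit that $H_\varepsilon$ is, by construction, nothing but a \emph{time reparametrization} of the magnetic billiard flow of $K:=E+V_\varepsilon$, whose periods were already controlled in \Cref{periodlens}. Since $H_\varepsilon=h_\varepsilon\circ K$ with $h_\varepsilon$ strictly increasing, a direct computation with the twisted form gives $X_{H_\varepsilon}=(h_\varepsilon'\circ K)\,X_K$. Because $K$ is preserved along its own flow, the factor $h_\varepsilon'(K)$ is constant along each orbit, so the $H_\varepsilon$-orbits coincide as unparametrized curves with the orbits of $K$, and a periodic orbit lying on the level $\{K=y\}$ has period
\[
T_{H_\varepsilon}=\frac{T_K}{h_\varepsilon'(y)}.
\]
First I would record that $h_\varepsilon$ is indeed $C^1$ and strictly increasing: at the break point $y=\varepsilon/2$ both one-sided values agree (equal to $2\pi\sqrt{\varepsilon}$) and both one-sided derivatives equal $2\pi/\sqrt{\varepsilon}$, so the reparametrization is genuinely smooth and the identity above is valid across the two regimes.

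Next I would differentiate $h_\varepsilon$ in each regime and match it against \Cref{periodlens}, using the identity $E+V_\varepsilon=\tfrac12\max|v|^2$ noted in \Cref{prop potential}, i.e.\ $y=\tfrac12\max|v|^2$ and $\max|v|=\sqrt{2y}$. For $y\geq \varepsilon/2$ (equivalently $\max|v|\geq\sqrt{\varepsilon}$) one has $h_\varepsilon'(y)=2\pi/\sqrt{2y}=2\pi/\max|v|$, so
\[
T_{H_\varepsilon}\;\geq\;\frac{\tfrac{2\pi}{\max|v|}-\mathcal{O}(\varepsilon)}{\tfrac{2\pi}{\max|v|}}\;=\;1-\frac{\max|v|}{2\pi}\,\mathcal{O}(\varepsilon)\;=\;1-\mathcal{O}(\varepsilon),
\]
where the last step uses $\max|v|\leq 1$ on the disk bundle. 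For $y\leq\varepsilon/2$ (equivalently $\max|v|\leq\sqrt{\varepsilon}$) one has $h_\varepsilon'(y)=2\pi/\sqrt{\varepsilon}$, and the matching lower bound $T_K\geq 2\pi/\sqrt{\varepsilon}-\mathcal{O}(\varepsilon)$ yields
\[
T_{H_\varepsilon}\;\geq\;\frac{\tfrac{2\pi}{\sqrt{\varepsilon}}-\mathcal{O}(\varepsilon)}{\tfrac{2\pi}{\sqrt{\varepsilon}}}\;=\;1-\frac{\sqrt{\varepsilon}}{2\pi}\,\mathcal{O}(\varepsilon)\;=\;1-\mathcal{O}(\varepsilon).
\]
In both cases $T_{H_\varepsilon}\geq 1-\mathcal{O}(\varepsilon)$, as claimed.

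The computational heart of the statement is already contained in \Cref{periodlens}; the remaining points are bookkeeping. The main thing to get right is that dividing the additive error $\mathcal{O}(\varepsilon)$ by $h_\varepsilon'(y)$ again produces an $\mathcal{O}(\varepsilon)$: in the fast regime this uses the a priori bound $\max|v|\leq 1$ on the disk bundle, and in the slow regime it uses $\sqrt{\varepsilon}\cdot\mathcal{O}(\varepsilon)=\mathcal{O}(\varepsilon)$, both immediate. The conceptual (rather than technical) crux is simply to observe that $h_\varepsilon$ has been engineered so that $h_\varepsilon'$ reproduces, branch by branch, the leading term $2\pi/\max|v|$ respectively $2\pi/\sqrt{\varepsilon}$ of the period bound in \Cref{periodlens}, which is exactly what normalizes each branch to the constant $1$.
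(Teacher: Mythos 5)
Your proposal is correct and takes essentially the same route as the paper: the paper's proof likewise splits at $y=\varepsilon/2$, uses $T_{H_\varepsilon}=T_{E+V_\varepsilon}/h_\varepsilon'\left(E+V_\varepsilon\right)$ together with $\max\vert v\vert=\sqrt{2(E+V_\varepsilon)}$, and matches each branch of $h_\varepsilon'$ against the period bound for the magnetic billiard dynamics. Your extra bookkeeping (the $C^1$-matching of $h_\varepsilon$ at the break point, the explicit identity $X_{H_\varepsilon}=(h_\varepsilon'\circ K)X_K$, and invoking \Cref{periodlens} on $L(p;1)$ where the paper's proof cites \Cref{prop potential}) only makes the same argument slightly more careful.
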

\begin{Remark}
    Note that $h_{\varepsilon}$ is differentiable in particular at $y=\frac{\varepsilon}{2}$. The function $h_\varepsilon$ is not smooth, i.e.\ of class $C^{\infty}$, but one can approximate it by a smooth function, and as the period only sees the first derivative this will not change the bound on the period. In addition, for the reader's convenience, we add an example of a possible  graph of $h_{\varepsilon}$. 
    \begin{figure}[h]
	\centering
 \includegraphics[width=0.35\textwidth]{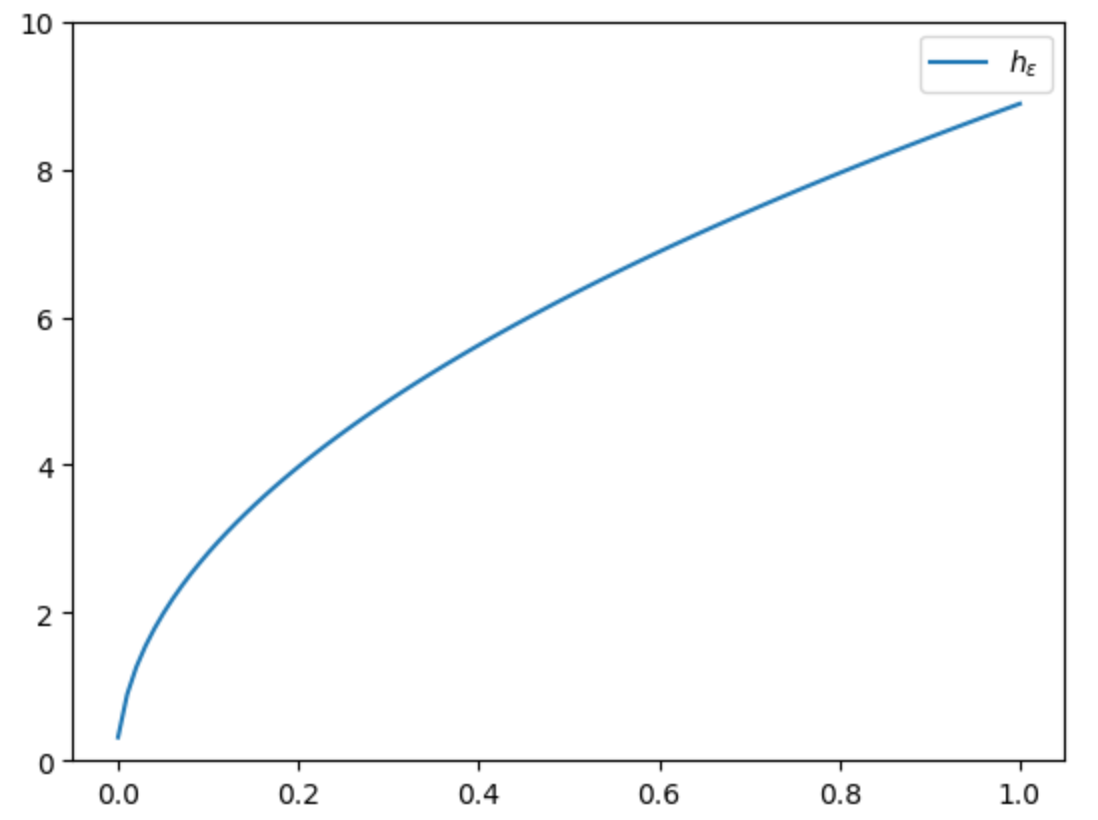}
	\caption{\textit{The figure shows a plot of the function $h_{\varepsilon}$ for $\varepsilon=\frac{1}{10}$. }}
    \label{figure plot heps}
\end{figure}
\end{Remark}
\begin{proof}
Let $(\gc, \dgc)$ be a periodic orbit of $H_{\varepsilon}$. There are two cases that we have to investigate. We start with $E(\gc,\dgc)+V_\varepsilon(\gc)\leq  \frac{\varepsilon}{2}$. Using that $H_\varepsilon =h_{\varepsilon}\circ (E(\gc,\dgc)+V_\varepsilon(\gc))$ for $h_{\varepsilon}'(y)= \frac{2\pi}{\sqrt\varepsilon}$ if $y\leq \frac{\varepsilon}{2}$, we find 
$$ 
T_{H_\varepsilon}= \frac{T_{E+V_\varepsilon}}{h_{\varepsilon}'(E(\gc,\dgc)+V_\varepsilon(\gc))}= \frac{\sqrt\varepsilon}{2\pi} T_{E+V_\varepsilon}\overset{\ref{prop potential}}{\geq} 1-\mathcal{O}(\varepsilon).
$$
Now if $E(\gc, \dgc)+V_\varepsilon(\gc)\geq  \frac{\varepsilon}{2}$, recall that $H_\varepsilon =h_\varepsilon\circ (E+V_\varepsilon)$ with $h_\varepsilon(y)=2\pi\sqrt{2y}$. Thus we find
    \[
T_{H_\varepsilon}= \frac{T_{E+V_\varepsilon}}{h'(E(\gc,\dgc)+V_\varepsilon(\gc))}=\frac{\sqrt{2(E(\gc,\dgc)+V_\varepsilon(\gc))}}{2\pi}  T_{E+V_\varepsilon}=\frac{\max\vert v\vert}{2\pi}  T_{E+V_\varepsilon}\overset{\ref{prop potential}}{\geq} 1-\mathcal{O}(\varepsilon).
    \]
\end{proof}
\noindent
We are finally in the position to prove the lower bound of the Hofer--Zehnder capacity.
\begin{Corollary}\label{lower bound Hofer Zehnder capacity}
The Hofer-Zehnder capacity $c_{\hz}(D_1L(p;1),\dd\lambda)$ is bounded from below by  
$$c_{\hz}(D_1L(p;1),\dd\lambda)\geq pl=2\pi,$$
    where $l$ denotes the length of the shortest prime geodesics on $L(p;1)$.
\end{Corollary}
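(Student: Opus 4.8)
The plan is to convert the reparametrized Hamiltonian $H_\varepsilon$ of Proposition~\ref{lower bound period of Hamiltonian}, which lives on the twisted bundle and has no orbit of period $\leq 1-\mathcal{O}(\varepsilon)$, into an honestly admissible Hamiltonian on the standard bundle $(D_1 L(p;1),\dd\lambda)$ whose oscillation approaches $2\pi$, and then to send $\varepsilon\to 0$. First I would transport $H_\varepsilon$ through the shift of the zero section \eqref{shift of zero section}. By Lemma~\ref{Magnetic flow is invariant under Zp} the form $\alpha$, hence the shift $\iota$, descends to $L(p;1)$, and $\iota$ is a symplectomorphism from $(D_{1-2\varepsilon}L(p;1),\dd\lambda-\varepsilon\pi^*\dd\alpha)$ onto the open subset $\iota(D_{1-2\varepsilon}L(p;1))\subset D_1 L(p;1)$. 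Since symplectomorphisms carry Hamiltonian flows to Hamiltonian flows with the same periods, $\tilde{H}_\varepsilon:=H_\varepsilon\circ\iota^{-1}$ inherits the period bound $T\geq 1-\mathcal{O}(\varepsilon)$.

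Next I would make $\tilde{H}_\varepsilon$ admissible by prescribing the profile function more carefully than $h_\varepsilon$ itself, keeping the oscillation within $\mathcal{O}(\sqrt\varepsilon)$ of $2\pi$. Writing $\tilde{H}_\varepsilon=f(E+V_\varepsilon)$, I would (i) set $f\equiv 0$ on a tiny interval $[0,a]$ with $a=o(\sqrt\varepsilon)$, so that $\tilde{H}_\varepsilon$ vanishes on an open neighborhood of the shifted zero section, yielding condition~(1); and (ii) set $f\equiv\max f$ once $E+V_\varepsilon$ exceeds a threshold just below $\tfrac{(1-2\varepsilon)^2}{2}$, so that $\tilde{H}_\varepsilon$ equals its maximum both near the boundary sphere bundle and throughout the high-potential collar surrounding $\gamma_\pm$, where $V_\varepsilon\to\infty$. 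This gives condition~(2) with $K$ the compact region $\{a<E+V_\varepsilon<\text{threshold}\}$, which stays in the interior of $D_1 L(p;1)$. On the flat regions the flow is trivial, so the only orbits left are on the ramp, where I would take the slope of $f$ to equal the largest value the period bounds of Proposition~\ref{lower bound period of Hamiltonian} and Corollary~\ref{periodlens} allow, namely $2\pi/\sqrt\varepsilon$ for $E+V_\varepsilon\leq\varepsilon/2$ and $2\pi/\sqrt{2(E+V_\varepsilon)}$ above; this preserves $T_{\tilde{H}_\varepsilon}\geq 1-\mathcal{O}(\sqrt\varepsilon)$. To upgrade the non-strict bound to the strict inequality of condition~(3), I would finally rescale $\tilde{H}_\varepsilon$ by a factor $1-c\varepsilon<1$, which multiplies all periods by $(1-c\varepsilon)^{-1}>1$ and the maximum by $1-c\varepsilon$.

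Integrating the prescribed slope shows that the maximum of the resulting admissible Hamiltonian equals $\int_a^{(1-2\varepsilon)^2/2} f'(y)\,\dd y = 2\pi(1-2\varepsilon)-\pi\sqrt\varepsilon-\mathcal{O}(\varepsilon)=2\pi-\mathcal{O}(\sqrt\varepsilon)$, while the minimum is $0$ since $f$ vanishes on $[0,a]$, so the oscillation is $2\pi-\mathcal{O}(\sqrt\varepsilon)$. By the definition of the Hofer--Zehnder capacity this produces, for every small $\varepsilon$, the estimate $c_{\hz}(D_1 L(p;1),\dd\lambda)\geq 2\pi-\mathcal{O}(\sqrt\varepsilon)$; letting $\varepsilon\to 0$ yields $c_{\hz}(D_1 L(p;1),\dd\lambda)\geq 2\pi$. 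By Lemma~\ref{l1} the shortest prime geodesic has length $l=2\pi/p$, so $pl=2\pi$ and the claimed bound $c_{\hz}(D_1 L(p;1),\dd\lambda)\geq pl=2\pi$ follows.

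I expect the delicate point to be step~(i): one cannot simply flatten $h_\varepsilon$ to zero near the origin, because the profile would then have to rise more steeply to reconnect, and a steeper slope shortens the reparametrized periods below $1$. The fix is to keep the slope capped at $2\pi/\sqrt\varepsilon$ and absorb the resulting shortfall into the oscillation, which costs only $\mathcal{O}(\sqrt\varepsilon)$ precisely because $h_\varepsilon(0)=\pi\sqrt\varepsilon$ is already that small. A secondary subtlety is verifying that the transition layer around $\gamma_\pm$, where the flow is reparametrized between a dynamical and a constant region, traps no short closed orbit; this is controlled by the billiard/trapping dichotomy behind Corollary~\ref{periodlens}, which forces any orbit interacting with the collar to wind at least $p$ times and hence be long.
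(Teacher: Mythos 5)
Your proposal is correct and takes essentially the same route as the paper: both transport the reparametrized magnetic-billiard Hamiltonian $H_\varepsilon$ into $(D_1L(p;1),\dd\lambda)$ via the shift \eqref{shift of zero section} (you push $H_\varepsilon$ forward, while the paper equivalently restricts to the sublevel set $U_\varepsilon$ and invokes monotonicity of the capacity), flatten the profile near the zero section and near the boundary/high-potential region while capping the slope so that Proposition \ref{lower bound period of Hamiltonian} and Corollary \ref{periodlens} keep all nonconstant periods above $1$ up to an error tending to zero, and conclude by letting $\varepsilon\to 0$ together with $l=2\pi/p$ from Lemma \ref{l1}. One small quantitative nit: your final rescaling factor $1-c\varepsilon$ is too weak to beat a period deficit of genuine order $\sqrt\varepsilon$, so you should rescale by $1-c\sqrt\varepsilon$ (harmless for the limit, and consistent with the paper's stated convention that $\mathcal{O}(\varepsilon)$ merely denotes a quantity tending to zero); the paper achieves the same strict inequality by directly requiring $f_\varepsilon'<1-2\mathcal{O}(\varepsilon)$.
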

\begin{proof} By monotonicity of capacities and using the shift     of the zero-section \eqref{shift of zero section} we find
    $$
    c_{HZ}(D_{1-2\varepsilon}L(p;1),\dd\lambda-\varepsilon\pi^*\dd\alpha)\leq c_{HZ}(D_1L(p;1),\dd\lambda).
    $$
    Denote the sub level set of $H_\varepsilon$ by
    $$
    U_\varepsilon=\left\lbrace (x,v)\in TL(p;1)\ \Big\vert\ \ H_\varepsilon (x,v) \leq 2\pi(1-2\varepsilon)\right\rbrace.
    $$
    Observe that $U_\varepsilon\subset D_{1-2\varepsilon}L(p;1)$. Now we can modify
    $$H_\varepsilon: U_\varepsilon\to \left [\frac{\sqrt\varepsilon}{2}, 2\pi(1-2\varepsilon)\right ]$$ 
    by composing it with another function 
    $$
    f_{\varepsilon}: \left [\frac{\sqrt\varepsilon}{2}, 2\pi(1-2\varepsilon)\right ]\to [0,\infty),
    $$
    with $0\leq f_{\varepsilon}^{\prime}<1-2\mathcal{O}(\varepsilon)$, $\min(f_{\varepsilon}\circ H_\varepsilon)=0$ and $\max(f_{\varepsilon}\circ H_\varepsilon)=\max(H_{\varepsilon})-\mathcal{O}(\varepsilon)$, so that $f_{\varepsilon}\circ H_\varepsilon$ is admissible. Indeed the periods then satisfy
    $$
    T_{f_{\varepsilon}\circ H_\varepsilon}=\frac{T_{H_\varepsilon}}{f^{\prime}_{\varepsilon}(H_\varepsilon)}>1.
    $$
    In particular, we obtain
    $$
    c_{HZ}(D_1L(p;1),\dd\lambda)\geq c_{HZ}(U_\varepsilon,\dd\lambda-\varepsilon\pi^*\dd\alpha)\geq \max(f_\varepsilon\circ H_\varepsilon)= 2\pi -\mathcal{O}(\varepsilon).
    $$
    Taking $\varepsilon\to 0$ yields the corollary as the sphere covering the lens space was of radius $1$ and therefore its prime geodesic have length $2\pi$, while the shortest prime geodesics on $L(p;1)$ have length $l=2\pi/p$.
\end{proof}

\section{Upper bound: Capacity of $(D_1S^3,\dd\lambda)$}
We will deduce the upper bound for disk tangent bundles of the lens spaces from the fact that they are all covered by the disk tangent bundle of the 3-sphere. To do so we first need to compute the Hofer--Zehnder capacity of $(D_1S^3,\dd\lambda)$.
\begin{theorem} Take the disk bundle with respect to the round metric and denote by $l$ the length of all prime geodesics, then
    $$c_{\hz}(D_1S^3,\dd\lambda)=l.$$
\end{theorem}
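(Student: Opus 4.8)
The statement to prove is that the Hofer--Zehnder capacity of the disk tangent bundle of $S^3$ with the round metric equals the common length $l=2\pi$ of all prime geodesics. The lower bound is the $p=1$ case of what was already established in Section 2 (reparametrize the geodesic flow by $H(x,v)=2\pi|v|$ and modify near the zero section and boundary), so the work here is entirely the upper bound. The plan is to use the pseudoholomorphic curve machinery of Hofer--Viterbo and Lu, which bounds the Hofer--Zehnder capacity of a symplectic manifold in terms of a nonvanishing Gromov--Witten invariant of a suitable closed symplectic manifold into which (a completion of) $D_1S^3$ embeds.

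First I would recall the symplectic reduction picture: the boundary $\partial D_1 S^3 = S_1 S^3$ is the unit cosphere bundle, a contact manifold whose Reeb flow is the (normalized) geodesic flow. Since all geodesics on the round $S^3$ are closed of the same length $l$, this Reeb flow generates a free circle action, and one can perform the Lerman symplectic cut along this action. As stated in the outline, the resulting closed symplectic manifold $\overline{(D_1S^3,\dd\lambda)}$ is symplectomorphic to the complex quadric $Q^3\subset \cp^4$ with (a multiple of) the Fubini--Study form; I would pin down the normalization constant carefully, since the final numerical answer $l=2\pi$ hinges on matching the symplectic area of the reduced divisor (the zero section's image, or the divisor at infinity) to the length $l$ of the geodesics. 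The key structural input is that $D_1 S^3$ symplectically embeds into $Q^3$ as the complement of the polar divisor, with the boundary of the disk bundle collapsing to that divisor.

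Second, I would invoke the Hofer--Viterbo/Lu theorem in the form: if a closed symplectic manifold $(\overline M,\overline\omega)$ admits a nonzero genus-zero Gromov--Witten invariant with one point constraint and a homology class $A$, then $c_{\hz}$ of any open subset obtained by removing a divisor is bounded above by the symplectic area $\omega(A)$ of that class. So the task reduces to exhibiting a nonvanishing Gromov--Witten invariant of the quadric $Q^3$ counting rational curves in the minimal degree class that pass through a generic point and meet the divisor. For the quadric these invariants are classical and explicitly computable (lines on the quadric through a point form the expected moduli, giving a nonzero count), and the minimal-energy class has $\overline\omega$-area exactly $l$ after the normalization fixed above. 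Combining the area of this class with the Hofer--Viterbo/Lu bound yields $c_{\hz}(D_1S^3,\dd\lambda)\le l$.

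The main obstacle I anticipate is bookkeeping the normalizations so that the area of the relevant $\mathrm{GW}$-carrying class equals $l=2\pi$ rather than some spurious multiple: one must track how the factor in the contact form $\alpha_x(v)=\tfrac12\Re\langle ix,v\rangle$, the radius-$1$ sphere, and the symplectic cut scaling interact, and confirm that the divisor picked up by the pseudoholomorphic curves is the one whose linking with the reduced geodesic gives area exactly $l$. A secondary point to verify is the applicability hypotheses of the Lu/Hofer--Viterbo bound (that $D_1S^3$ sits inside $Q^3$ as the complement of the symplectic divisor, with the correct Liouville/contact-type boundary behavior so that admissible Hamiltonians extend and the curve count controls their oscillation). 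Once these normalization and hypothesis-checking steps are done, the two bounds coincide and give $c_{\hz}(D_1S^3,\dd\lambda)=l$.
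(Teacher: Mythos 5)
Your proposal follows essentially the same route as the paper: the identical lower bound via the reparametrized geodesic flow $H(x,v)=2\pi\vert v\vert$, and the identical upper bound via identifying $(D_1S^3,\dd\lambda)$ with the complement of $Q^2$ in the quadric $(Q^3,2\sigma)$ and applying the Hofer--Viterbo/Lu Gromov--Witten bound to the minimal class $A$ (with the divisor constraint, which the paper justifies via $A\cdot[Q^2]\neq 0$ from its cohomology computation), giving $c_{\hz}\leq 2\sigma(A)=2\pi$. The normalization and hypothesis checks you flag are exactly what the paper delegates to its citations, so your plan is correct as it stands.
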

\begin{proof}
    We prove the Theorem by finding a lower and an upper bound that coincide. Further we normalize the metric such that $l=2\pi$, the general case follows by scaling.\\
    \ \\    
    \textit{Lower bound:} All orbits of $H: D_1S^3\setminus 0_{TS^3}\to\R,\ (x,v)\mapsto 2\pi\vert v\vert$ are periodic of period one. Composing the Hamiltonian $H$ with a suitable function $f:[0,2\pi]\to\R$ we can extend $f\circ H$ smoothly to the zero-section such that $f\circ H$ is admissible and $\max(f\circ H)=\max(H)-\mathcal{O}(\varepsilon)$. Now the lower bound follows from $\varepsilon\to 0$. 
\begin{Remark}
    For the upper bound we will use results by Hofer--Viterbo \cite{HV92} and Lu \cite{Lu06}, using the theory of pseudoholomorphic curves or more precisely Gromov--Witten invariants to obtain an upper bound for the Hofer--Zehnder capacity. For a detailed introduction to Gromov-Witten invariants in symplectic topology we refer to \cite{DS12}. We use the notation of Wendl \cite{Wdl18} and denote by $\mathrm{GW}_A([pt.],\alpha,\beta)$ the count of pseudoholomorphic spheres $u:\cp^1\to Q^3$ representing the homology class $A\in H_2(Q^3,\Z)$ with three marked points mapped to representatives of the homology classes $[pt.], \alpha,\beta \in H_2(Q^3,\Z)$. 
\end{Remark}
    \noindent
    \textit{Upper bound:}
    As shown for example in \cite[Lem.\ 3.2]{Ad22} the disk tangent bundle is symplectomorphic to the complement of 2-quadric in the 3-quadric, i.e.
    $$
    (D_1S^3,\dd\lambda)\cong (Q^3\setminus Q^2,2\sigma).
    $$
    Here $Q^n:=\lbrace [z_0:\ldots:z_{n+1}]\in \cp^{n+1}\vert \ z_0^2+\ldots +z_{n+1}^2=0\rbrace$ and $\sigma$ denotes the restriction of the Fubini-Study to $Q^3\subset \cp^4$ normalized to take value $\pi$ on the generator of $H_2(\cp^n,\Z)$.\\
    \ \\
    \noindent
    Note that $Q^3\cong \mathrm{SO}(5)/(\mathrm{SO}(3)\times\mathrm{SO}(2))$ is actually an irreducible Hermitian symmetric space of rank 2. By \cite[Lem.\ 15]{LMZ13} a Gromov--Witten invariant of the form $\mathrm{GW}_A([pt.],\alpha,\beta)$ for $A$ the generator of $H_2(Q^3,\Z)$ does not vanish. As $[Q^2]\neq 0$ by Lemma \ref{l2} we have $[Q^2]\cdot [A]\neq 0$ and therefore we find
    $$
    \mathrm{GW}_A\left([pt.],\alpha,\beta,[Q^2]\right)=\mathrm{GW}_A\left([pt.],\alpha,\beta\right)\left (A\cdot [Q^2]\right)\neq 0.
    $$
    Finally we can use a corollary \cite[Cor.\ 3.2.11]{Bim} of a theorem by Lu \cite[Thm.\ 1.10]{Lu06} to find 
    $$
    c_{\hz}(D_1S^3,\dd\lambda)=c_{\hz}(Q^3\setminus Q^2,2\sigma)\leq 2\sigma(A)=2\pi.
    $$ 
\end{proof}
\begin{Remark}
    Observe that $A$ is the generator of $H_2(Q^3,\Z)\cong\Z$, therefore it is a minimal class and the moduli space of pseudoholomorphic curves in class $A$ is compact. The Gromov-Witten invariant we use here is thus defined without using any virtual techniques.
\end{Remark}
\noindent
We can now deduce an upper bound for the Hofer--Zehnder capacity of $(D_1 L(p;1),\dd\lambda)$.
\begin{Corollary}\label{upper bound Hofer Zehnder capacity}
The Hofer-Zehnder capacity $c_{\hz}(D_1L(p;1),\dd\lambda)$ is bounded from above by
    $$c_{\hz}(D_1L(p;1),\dd\lambda)\leq pl=2\pi,$$
    where $l$ denotes the length of the shortest prime geodesics.
\end{Corollary}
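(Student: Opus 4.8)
The plan is to deduce the upper bound for the lens space directly from the already-established upper bound for the 3-sphere, using the covering $S^3 \to L(p;1)$ together with monotonicity-type behavior of the Hofer--Zehnder capacity under lifts of Hamiltonians. The key observation, already flagged in the Outline of the excerpt, is the inequality
\[
c_{\hz}(D_1 L(p;1),\dd\lambda)\leq c_{\hz}(D_1 S^3,\dd\lambda),
\]
after which the preceding theorem gives $c_{\hz}(D_1 S^3,\dd\lambda)=2\pi$ immediately, completing the proof. So the real content is establishing this single inequality.

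First I would make precise the lifting argument. Given any admissible Hamiltonian $H$ on $D_1 L(p;1)$, I would pull it back along the covering map $\pi\colon D_1 S^3 \to D_1 L(p;1)$ induced by the quotient $S^3\to L(p;1)$ to obtain $\tilde H := H\circ \pi$ on $D_1 S^3$. Since the covering is a local symplectomorphism (the canonical symplectic form $\dd\lambda$ is defined locally from the metric and the $\Z_p$-action is by isometries, hence the quotient is a Riemannian covering and $\pi$ is a local isometry), the Hamiltonian vector field of $\tilde H$ is the horizontal lift of that of $H$. I would then check that $\tilde H$ is admissible: conditions (1) and (2) transfer because $\max(\tilde H)=\max(H)$, the set where $\tilde H$ vanishes is the $\pi$-preimage of the set where $H$ vanishes (still open), and the set where $\tilde H$ equals its maximum is the preimage of a compact set avoiding the boundary (still compact and boundary-avoiding since $\pi$ is a finite proper covering).

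The crucial step is condition (3): every non-constant periodic orbit of $X_{\tilde H}$ must have period $T>1$. Here I would argue that any periodic orbit $\tilde\gamma$ of $X_{\tilde H}$ on $D_1 S^3$ projects under $\pi$ to a periodic orbit $\gamma=\pi\circ\tilde\gamma$ of $X_H$ on $D_1 L(p;1)$ of the \emph{same} period (the flows are $\pi$-related, so the time it takes $\tilde\gamma$ to close up is at least the time it takes its projection to close up). A non-constant $\tilde\gamma$ projects to a non-constant $\gamma$, whose period is $>1$ by the admissibility of $H$; hence the period of $\tilde\gamma$ is $>1$ as well. This shows $\tilde H$ is admissible on $D_1 S^3$, so $\max(H)=\max(\tilde H)\leq c_{\hz}(D_1 S^3,\dd\lambda)$, and taking the supremum over all admissible $H$ yields the desired inequality.

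The main obstacle is the period comparison in condition (3): one must be careful that a periodic orbit upstairs could in principle project to an orbit that closes up sooner and is then retraced, which would make the downstairs period a proper divisor of the upstairs period. I would resolve this by noting that since $\pi$ is a covering and the flows are $\pi$-related, if $\tilde\gamma$ has period $T$ then $\gamma$ is periodic with period dividing $T$, so in particular $T$ is a (positive integer) multiple of the period of $\gamma$; as the period of $\gamma$ exceeds $1$, so does $T$. Combining the resulting inequality $c_{\hz}(D_1 L(p;1),\dd\lambda)\leq c_{\hz}(D_1 S^3,\dd\lambda)$ with the previous theorem (which gives $c_{\hz}(D_1 S^3,\dd\lambda)=2\pi$, taking $l=2\pi$ for the radius-one sphere) completes the argument.
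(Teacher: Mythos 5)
Your proposal is correct and takes essentially the same route as the paper: the paper's proof is exactly the one-line observation that an admissible Hamiltonian $H$ on $D_1L(p;1)$ pulls back under the covering to an admissible Hamiltonian on $D_1S^3$, so $\max(H)\leq c_{\hz}(D_1S^3,\dd\lambda)=pl=2\pi$. You merely spell out the details the paper leaves implicit, in particular the correct handling of the period comparison (the upstairs period is an integer multiple of the minimal period of the projected orbit), which is a sound and welcome elaboration.
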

\begin{proof}
    Let $H:D_1L(p;1)\to \R$ be an admissible Hamiltonian, then $H\circ \dd\mathrm{pr}: D_1S^3\to\R $ is admissible for $(DS^3,\dd\lambda)$. In particular 
    $$
    \max (H)=\max (H\circ \dd\mathrm{pr}) \leq c_{\hz}(DS^3,\dd\lambda)=pl=2\pi,
    $$
    as $pl=2\pi$ is the length of the prime geodesics on the sphere covering $L(p;1)$ isometrically.
\end{proof}
\noindent
 As the upper bound of the Hofer-Zehnder capacity $(D_1 L(p;1),\dd\lambda)$ in Corollary \ref{upper bound Hofer Zehnder capacity} coincides with the lower bound constructed in the previous section in Corollary \ref{lower bound Hofer Zehnder capacity} this finishes the proof of Theorem \ref{thm}.

\section{Further directions}
We will finish the article by discussing open problems which are in our opinion closely related to \Cref{thm}. The first question one has is probably, what about other lens spaces? First note that when considering lens spaces $L(p;1)$ for even $p$ \Cref{l1} does not hold, as every geodesic on $S^3$ covers a geodesic of $L(p;1)$ at least twice. Still, as shown by the first author \cite{B23}, for $\rp^2$ (e.g.\ $p=2$) Theorem \ref{thm} continues to hold. The lower bound in this case was obtained using geodesic billiards on one of the hemispheres of $S^3$. For general odd $p$ the authors are optimistic that one could combine the two dynamics in order to find a lower bound. As the upper bound comes from $S^3$ it actually holds for any lens space $L(p;q)$. 
 \begin{Question}
     Let $p,q\in\Z$ co-prime and odd. Can one extend the proof of the lower bound in \Cref{thm} to compute the Hofer-Zehnder capacity $D_1L(p,q)$?
 \end{Question}
 \noindent
 The guess of the authors would be yes, with the following modification one should instead of studying the magnetic flow $S^3$ equipped with it's standard contact form the magnetic flow on the ellipsoid $E\left(1,q\right)$ equipped with it's standard contact form. This can be done as follows: we equip $S^3$ with the contact form $\alpha_{q}=\frac{\mathrm{i}}{2}\left(z_1\dd \bar{z_1}-\bar{z}_1\dd z_1+ \frac{1}{q} \left(z_2\dd \bar{z}_2-\bar{z}_2\dd z_2 \right) \right)$. Note that the Reeb vector field is given by $R_{(z_1,z_2)}= 2(\mathrm{i} z_1, \mathrm{i} q z_2)$ thus the Reeb flow w.r.t. $\alpha_q$ on $S^3$ is given by $\varPhi_t(z_1,z_2)= \left(e^{2\mathrm{i}t} z_1,  e^{2 q\mathrm{i}t} z_2\right)$. If we replace $\mathrm{i}$ by $\mathrm{j}$ again this Reeb flow can used to define a $\Z_p$ action on $S^3$ as follows
 \[
 (z_1, z_2)\mapsto \left(e^{2\pi \frac{1}{p}j}z_1,e^{2\pi \frac{q}{p}j}z_2\right). 
 \]
 Dividing out this $\Z_p$ action on $S^3$ we obtain the lens space $L(p,q)=S^3/\Z_p$. 
\begin{Conjecture}
     For all $p,q\in \N$ co-prime the Hofer-Zehnder capacity of the disk bundle of $L(p,q)$ is given by
     $$
 c_{HZ}\left( D_1L(p,q), \dd\lambda\right)= 2\pi. 
 $$
\end{Conjecture}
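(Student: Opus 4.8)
The plan is to follow the two-step blueprint of \Cref{thm}, replacing the standard contact form $\alpha$ by the weighted contact form $\alpha_q$ throughout, and to treat the upper and lower bounds entirely separately. The upper bound requires no new ideas: the covering $S^3\to L(p,q)$ is an isometry for the round metric, so the prime geodesics of the covering sphere still have length $2\pi$. Hence any admissible Hamiltonian on $D_1L(p,q)$ lifts to an admissible Hamiltonian on $D_1S^3$ and, exactly as in \Cref{upper bound Hofer Zehnder capacity}, one obtains $c_{\hz}(D_1L(p,q),\dd\lambda)\leq c_{\hz}(D_1S^3,\dd\lambda)=2\pi$. The whole difficulty is therefore concentrated in producing a matching lower bound $c_{\hz}(D_1L(p,q),\dd\lambda)\geq 2\pi$.

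For the lower bound I would study the magnetic system $(S^3,g,\varepsilon\,\dd\alpha_q)$. The first task is to verify the analog of \Cref{Magnetic flow is invariant under Zp}: since $\alpha_q$ is assembled from $i$ and the weight $q$, while the deck action is assembled from $j$ and the same weight $q$, and $ij=ji$, one checks that $\alpha_q$ is invariant under $(z_1,z_2)\mapsto(e^{2\pi j/p}z_1,e^{2\pi q j/p}z_2)$, so the magnetic flow descends to $L(p,q)$. As before, the two coordinate circles $\gc_+(t)=(e^{2\pi i t},0)$ and $\gc_-(t)=(0,e^{2\pi i t})$ move in the $\alpha_q$-Reeb direction $R_{(z_1,z_2)}=2(iz_1,iqz_2)$, hence remain ordinary (magnetic) geodesics and are $\Z_p$-invariant of short period; these play the role of the Hopf link. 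One then removes a tubular neighborhood of $\gc_\pm$, adds a $\Z_p$-invariant potential $V_\varepsilon$ blowing up near $\gc_\pm$, analyzes the resulting magnetic billiard dynamics, and finally reparametrizes by a function $h_\varepsilon$ as in \Cref{lower bound period of Hamiltonian} to obtain an admissible Hamiltonian whose oscillation tends to $2\pi$. In outline this reproduces Steps 1--3 of Section~2.

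The main obstacle is the dynamical analysis of the magnetic geodesics of $\alpha_q$, i.e.\ the $q$-weighted analogs of \Cref{p1} and \Cref{lemma lense space}. For $q=1$ the Reeb field is $2ix$ and the equation of motion \eqref{e1} decouples into two copies of the \emph{same} harmonic oscillator, which is precisely what produces the clean splitting $\gc(t)=e^{i\theta_+t}p_++e^{i\theta_-t}p_-$ and the uniform period bound of \Cref{Lower bound for magnetic geodesics on S3}. For general $q$ the Reeb field $R_{(z_1,z_2)}=2(iz_1,iqz_2)$ is anisotropic, so the Lorentz force acts with different frequencies on the two $\C$-factors and the motion becomes a genuine two-frequency (quasi-periodic) system; the integrals of motion $c$ and $\delta$ must be replaced by their $\alpha_q$-versions, and the period estimate has to be extracted from this quasi-periodic flow rather than from an honestly periodic one. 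Establishing that, after the magnetic perturbation and the potential, every $\Z_p$-invariant periodic orbit other than $\gc_\pm$ has winding number at least $p$ in the weighted Reeb direction --- the analog of \Cref{periodlens} --- is where the real work lies.

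Finally, I expect the number-theoretic hypotheses to enter exactly here. For $L(p,1)$ the oddness of $p$ was used in \Cref{l1} to guarantee uniqueness of the short geodesic through $x$ and its translates; for $L(p,q)$ the coprimality of $p$ and $q$ (and a parity condition) should be what forces the $\Z_p$-symmetric magnetic billiard configurations to close up only after winding around the Reeb direction $p$ times, thereby driving their periods to infinity and leaving $\gc_\pm$ as the only obstructions. Controlling these symmetric configurations uniformly in $\varepsilon$, together with the billiard bounce analysis adapted to the weighted geometry, is the step I would expect to consume most of the effort.
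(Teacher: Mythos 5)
The statement you were asked to prove is not proved in the paper at all: it appears as an open conjecture in the final section (``Further directions''), and the paper's only contribution toward it is a strategy sketch --- replace the standard contact form by the weighted form $\alpha_q$, whose $j$-version of the Reeb flow generates the $\Z_p$-action defining $L(p,q)$ --- which is exactly the plan you outline. Your upper-bound paragraph is complete and correct, and coincides with the paper's remark that the bound $c_{\hz}(D_1L(p,q),\dd\lambda)\leq c_{\hz}(D_1S^3,\dd\lambda)=2\pi$ holds for \emph{every} lens space $L(p;q)$, since admissible Hamiltonians lift along the covering exactly as in \Cref{upper bound Hofer Zehnder capacity}. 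So for the upper bound you and the paper agree, and that half is fine.

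Your lower bound, however, is a program rather than a proof --- you say so yourself --- and the missing steps are precisely the mathematical content of the conjecture; moreover, at least three of them demonstrably do not carry over verbatim from the $q=1$ case. First, the explicit solution formula of \Cref{p1} and the period bound of \Cref{Lower bound for magnetic geodesics on S3} rely on the fact that for $q=1$ the ambient ODE \eqref{e1} has scalar constant coefficients, so every magnetic geodesic splits as $e^{i\theta_+t}p_++e^{i\theta_-t}p_-$ with the \emph{same} pair of frequencies on both $\C$-factors; for $q\neq 1$ the Lorentz force of $\dd\alpha_q$ acts with different weights on the two factors, and with it dies the key trick of \Cref{lemma lense space} (multiplying the two coordinates to cancel the rotation and force $T_E/p\geq 2\pi/\varepsilon$). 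One would need a replacement invariant adapted to the action $(z_1,z_2)\mapsto(e^{2\pi j/p}z_1,e^{2\pi qj/p}z_2)$ --- something like a monomial $z_1^q\bar z_2$ --- and a new period estimate for the resulting two-frequency flow; you identify this difficulty but supply nothing toward resolving it. Second, the trapped-orbit analysis of Appendix \ref{B} rests on the radius formula $R_{c,\delta,\varepsilon}$ from \cite{ABM}, which is specific to the round contact form; the conserved momenta of Appendix \ref{A} do survive (since $\alpha_q$ is still independent of $\varphi_1,\varphi_2$ in those coordinates), but all quantitative bounds would have to be re-derived in the anisotropic geometry. Third, you give no argument that $\Z_p$-symmetric bounce configurations in the weighted geometry must wind at least $p$ times in the Reeb direction --- the analog of \Cref{periodlens} --- and the paper's symmetry argument there uses the rotational symmetry of the $q=1$ picture. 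Until these steps are carried out, the statement remains a conjecture, exactly as the paper presents it; your proposal should be read as a correct reconstruction of the authors' intended line of attack, not as a proof.
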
 
\noindent
Lastly we also want to say a few words about higher dimensions. Again the upper bound continues to hold as also in higher dimensions, since we have
$$
(D_1S^{2n+1},\dd\lambda)\cong (Q^{2n+1}\setminus Q^{2n}, 2\sigma).
$$
For the lower bound one could try to use the fact that the magnetic geodesic flow on $S^{2n+1}$ actually stays on totally geodesic copies of $S^3$ which are invariant under the magnetic flow, see for reference \cite{ABM}. Certainly the potential $U$ one uses can be extended to $S^{2n+1}$ such that $U$ tends to infinity in a neighborhood of the Reeb orbits $\gc_k$ where $\gc_k(t)$ is a rotation in the $k$-th coordinate by the angle $2 t$. So, we end up with the following: 
\begin{Conjecture}
    For all $p \in \N$ odd the Hofer-Zehnder capacity of the disk bundle of $L(p,1, \dots, 1)= S^{2n+1}/\Z_p$ is given by
     $$
 c_{HZ}\left( D_1L(p,1, \dots, 1), \dd\lambda\right)= 2\pi. 
 $$
\end{Conjecture}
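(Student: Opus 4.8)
The plan is to follow the two-step template of Theorem~\ref{thm} in dimension $2n+1$, proving matching upper and lower bounds equal to $2\pi$. The upper bound should require no new ideas. The symplectomorphism $(D_1 S^{2n+1},\dd\lambda)\cong(Q^{2n+1}\setminus Q^{2n},2\sigma)$ recalled above reduces it to a Gromov--Witten computation on the quadric $Q^{2n+1}$, which is again an irreducible Hermitian symmetric space of rank two and Fano, with $H_2(Q^{2n+1},\Z)\cong\Z$ generated by the line class $A$. Exactly as for $n=1$ one checks that an invariant $\mathrm{GW}_A([pt.],\alpha,\beta)$ is non-zero and that $A\cdot[Q^{2n}]\neq 0$, so that $\mathrm{GW}_A([pt.],\alpha,\beta,[Q^{2n}])\neq 0$, and Lu's theorem \cite{Lu06} gives $c_{\hz}(D_1 S^{2n+1},\dd\lambda)\leq 2\sigma(A)=2\pi$. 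Since every admissible Hamiltonian on $D_1 L(p,1,\dots,1)$ lifts through the covering $S^{2n+1}\to L(p,1,\dots,1)$ to an admissible Hamiltonian on $D_1 S^{2n+1}$ of the same maximum, monotonicity yields $c_{\hz}(D_1 L(p,1,\dots,1),\dd\lambda)\leq 2\pi$, exactly as in Corollary~\ref{upper bound Hofer Zehnder capacity}.

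For the lower bound I would again reparametrize a potential-modified kinetic Hamiltonian on the magnetic system $(S^{2n+1},g,\varepsilon\,\dd\alpha)$ and descend to the quotient. The feature making this tractable is that the magnetic geodesic equation $\ddot\gamma-i\varepsilon\dot\gamma+(c^2-\varepsilon\delta)\gamma=0$ decouples coordinatewise in $\C^{n+1}$, so every solution has the form $\gamma(t)=e^{i\theta_+ t}p_++e^{i\theta_- t}p_-$ and stays inside the flow-invariant complex $2$-plane $P=\mathrm{span}_\C\{p_+,p_-\}$; on the totally geodesic $S^3=P\cap S^{2n+1}$ the dynamics is literally the three-dimensional magnetic flow, so the unperturbed period bound of Lemma~\ref{Lower bound for magnetic geodesics on S3} holds unchanged. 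I would then add a $\Z_p$-invariant potential $V_\varepsilon$ depending only on the distance to the union of the coordinate Reeb circles $\gamma_k(t)=(0,\dots,e^{2it},\dots,0)$, blowing up near each $\gamma_k$ and vanishing outside their $\varepsilon$-neighborhoods; the diagonal $\Z_p$-action preserves each complex line $\C e_k$, hence $\bigcup_k\gamma_k$ and $V_\varepsilon$ are $\Z_p$-invariant. Reparametrizing $E+V_\varepsilon$ by the function $h_\varepsilon$ of Proposition~\ref{lower bound period of Hamiltonian} and running the cut-off argument of Corollary~\ref{lower bound Hofer Zehnder capacity} would then produce an admissible Hamiltonian on an open subset of $D_1 L(p,1,\dots,1)$ of oscillation $2\pi-\mathcal{O}(\varepsilon)$; letting $\varepsilon\to 0$ gives the matching lower bound, once the period estimate discussed below is established.

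The hard part is the higher-dimensional analog of Proposition~\ref{prop potential} together with the $\Z_p$-winding count of Corollary~\ref{periodlens}. In dimension three there is a single invariant $S^3$ carrying both poles $\gamma_\pm$, so a magnetic bounce orbit never leaves it and Lemma~\ref{lemma bounce orbits} identifies bounce periods with smooth-orbit periods by reflection symmetry. In higher dimensions the reflection at the wall around $\gamma_k$ reverses only the velocity component pointing radially away from $\gamma_k$, and this component lies in $P$ exactly when $e_k\in P$. Orbits confined to a coordinate-spanned plane $P_{kl}=\C e_k\oplus\C e_l$ therefore reduce to the three-dimensional two-pole billiard and are governed by the existing lemmas, but a plane passing within $\varepsilon$ of an axis $\C e_l$ it does not contain can be kicked onto a different invariant $S^3$ at the bounce, so a general billiard orbit is a concatenation of magnetic arcs living on varying $2$-planes. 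The clean identity $T_{\text{bounce}}=T_E$ must then be replaced by a bookkeeping robust to this wandering: I would show that each bounce preserves the speed $c$ and the Reeb component $\delta$ while reflecting the contact component, so that the winding in the Reeb direction accumulates additively along the orbit, and that a $\Z_p$-symmetric closed configuration which is not a pure magnetic geodesic (those being controlled by the analog of Lemma~\ref{lemma lense space}) must traverse at least $p$ arcs and hence wind at least $p$ times, forcing $T_{E+V_\varepsilon}\geq p\cdot\tfrac{2\pi}{\max|v|}$ on $S^{2n+1}$. Making the $\varepsilon\to 0$ compactness uniform across the whole family of invariant $S^3$'s, and excluding short $\Z_p$-symmetric orbits that drift among planes clustering near the coordinate axes, is where I expect the genuine technical difficulty to concentrate.
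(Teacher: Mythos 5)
This statement is one of the paper's closing \emph{conjectures}: the paper offers no proof, only a two-line strategy (quadric upper bound; magnetic flow restricted to invariant $S^3$'s plus a potential blowing up near the coordinate Reeb circles $\gamma_k$), which is exactly the template you elaborate. So there is no proof to compare against, and your text is in any case a plan rather than a proof --- you yourself flag the compactness and $\Z_p$-winding bookkeeping as unresolved. Your upper bound is indeed routine and correct: $(D_1S^{2n+1},\dd\lambda)\cong(Q^{2n+1}\setminus Q^{2n},2\sigma)$, the odd quadric is again an irreducible Hermitian symmetric space of rank two so \cite{LMZ13} applies, and the lifting argument of \Cref{upper bound Hofer Zehnder capacity} transfers verbatim. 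But the lower bound has a structural failure \emph{earlier} than the difficulties you flag, at the analog of \Cref{lemma lense space} (Step 1), which your plan takes for granted.

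For $n+1\geq 3$ no diagonal $\Z_p$-action presenting $L(p;1,\dots,1)$ can have simple eigenvalues: by the classification of lens spaces the weights must, after replacing the generator, all be $\pm 1$ (taken literally they are all $+1$, in which case the generator $e^{2\pi i/p}$ is a rotation along the Reeb flow of $\alpha$ itself), so by pigeonhole some eigenspace $W\subseteq\C^{n+1}$ has $\dim_\C W=k\geq 2$. Every Reeb fiber through the eigensphere $W\cap S^{2n+1}\cong S^{2k-1}$ then descends to a closed orbit of period $2\pi/(cp)$ in the quotient, where $c$ is the speed; and Reeb orbits are magnetic geodesics for \emph{every} strength $\varepsilon$, since the Lorentz force of $\dd\alpha$ annihilates the Reeb field (equivalently, substitute $\gamma(t)=e^{ict}x$, $\delta=c$, into \eqref{e1}). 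So, in contrast to the three-dimensional case where the magnetic perturbation kills all short orbits except the two circles $\gamma_\pm$, here a positive-dimensional family of short orbits survives, and a generic member --- e.g.\ the fiber through $(e_0+e_1)/\sqrt 2$ when $e_0,e_1$ carry equal weight --- stays a definite distance from every coordinate circle $\gamma_k$. Your potential $V_\varepsilon$, supported near $\bigcup_k\gamma_k$, vanishes along it, so after reparametrizing by the $h_\varepsilon$ of \Cref{lower bound period of Hamiltonian} these orbits have period exactly $1/p<1$, and the resulting Hamiltonian is never admissible for any $\varepsilon$; the oscillation you would extract is at most $2\pi/p$, not $2\pi$. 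Any proof must first break this degeneracy --- for instance by twisting with a contact form with pairwise distinct weights, in the spirit of the ellipsoid forms $\alpha_q$ the paper proposes for $L(p,q)$ --- at the cost of losing the explicit solution formula of \cite{ABM} on which your period estimates (\Cref{Lower bound for magnetic geodesics on S3}, \Cref{prop potential}) rely; this, together with the bounce-orbit issues you already identify, is presumably why the paper leaves the statement as a conjecture.
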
\noindent\\
\textbf{Data availability} In this research no data is processed.\\
\noindent\\ 
\textbf{Declarations}\\\\
\textbf{Conflict of interest:} On behalf of all authors, the corresponding author states that there is no conflict of interest.
\appendix
\section{Integrals of motion}\label{A}

We introduce the following coordinates on $S^3\subset \C^2$,
\begin{align*}
    z_1=e^{i\varphi_1}\sin(\theta)\ \& \ z_2=e^{i\varphi_2}\cos(\theta)
\end{align*}
for $\varphi_1,\varphi_2\in [0,2\pi)$ and $\theta\in[0,\pi/2]$. Observe that $\gamma_+$ corresponds to $\theta=\pi/2$ and $\gamma_-$ corresponds to $\theta=0$ and that
$$
\mathrm{dist}_{\partial\Omega_{\varepsilon}}(x)=\left\{\begin{array}{ll} \theta(x), & \theta(x)\leq \pi/4 \\
\pi/2-\theta(x), & \theta(x)\geq \pi/4\end{array}\right. .
$$
Hence, $V_\varepsilon$ depends only on $\theta(x)$. The metric in these coordinates is 
$$
\dd s^2=\dd\theta^2+\sin(\theta)^2\dd\varphi_1^2+\cos(\theta)^2\dd\varphi_2^2.
$$
We plug $v=\dot \theta\partial_{\theta}+\dot\varphi_1\partial_{\varphi_1}+\dot\varphi_2\partial_{\varphi_2}$ into $\dd s^2$ to obtain
$$
\vert v\vert^2=\dot\theta^2+\sin(\theta)^2\dot\varphi_1^2+\cos(\theta)^2\dot\varphi_2^2.
$$
Using the relation \( ix = \partial_{\varphi_1} + \partial_{\varphi_2} \), we can express the Lagrangian in these coordinates: 
$$
L\left( \varphi_1, \varphi_2, \theta, \dot{\varphi_1}, \dot{\varphi}_2, \dot{\theta}\right)=\frac{1}{2}\left(\dot\theta^2+\sin(\theta)^2\dot\varphi_1^2+\cos(\theta)^2\dot\varphi_2^2\right)+\varepsilon\left(\sin(\theta)^2\dot\varphi_1+\cos{\theta}^2\dot\varphi_2\right)-V_\varepsilon(\theta).
$$
In particular, the Lagrangian $L$ does not depend on $\varphi_1$ and $\varphi_2$. therefore the conjugate momenta 
\begin{align}\label{iom}
    c_1:=\frac{\partial L}{\partial\dot\varphi_1}=\sin^2(\theta)(\dot\varphi_1+\varepsilon)\ \ \&\ \ \
    c_2:=\frac{\partial L}{\partial\dot\varphi_2}=\cos^2(\theta)(\dot\varphi_2+\varepsilon)
\end{align}
are preserved. Since the energy \( E_{\varepsilon} = E - V_{\varepsilon} \), where \( E \) denotes the kinetic Hamiltonian, is also an integral of motion, the Lagrangian system \( L_{\varepsilon} \) has three independent integrals of motion.
 So the corresponding Hamiltonian system $H_{\varepsilon}$ on $TS^{3}$ is integrable, so we can conclude: 
\begin{Corollary}
    The Hamiltonian system $\left(TS^3, E+V_\varepsilon, \dd\lambda-\varepsilon\pi^*\dd\alpha\right)$ is integrable.
\end{Corollary}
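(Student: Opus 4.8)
The plan is to exhibit three functionally independent integrals of motion for the Lagrangian system $L_\varepsilon$ and invoke Liouville--Arnold integrability. Since $S^3$ is three-dimensional, a system on $TS^3$ is integrable once we produce three independent Poisson-commuting integrals of motion. All the ingredients have in fact already been assembled in this appendix, so the proof is essentially a matter of collecting them and verifying independence.

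First I would record the three candidate integrals. The Lagrangian $L$ computed above is cyclic in both $\varphi_1$ and $\varphi_2$, so by the Euler--Lagrange equations the conjugate momenta $c_1=\sin^2(\theta)(\dot\varphi_1+\varepsilon)$ and $c_2=\cos^2(\theta)(\dot\varphi_2+\varepsilon)$ are conserved; this is exactly \eqref{iom}. The third integral is the shifted energy $E_\varepsilon=E-V_\varepsilon$, which is conserved because the system is autonomous (the Lagrangian has no explicit $t$-dependence). Passing to the Hamiltonian side via the Legendre transform, these three functions become integrals of motion $c_1,c_2,H_\varepsilon$ on $(TS^3,\dd\lambda-\varepsilon\pi^*\dd\alpha)$, and since $\varphi_1,\varphi_2$ are cyclic the momenta $c_1,c_2$ Poisson-commute with the Hamiltonian and, being momenta conjugate to commuting translations, with each other.

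Second I would check functional independence on a dense open subset. Away from the two Reeb orbits $\gamma_\pm$ (i.e.\ for $0<\theta<\pi/2$, where $\sin\theta$ and $\cos\theta$ are both nonzero) the three functions $c_1,c_2,H_\varepsilon$ depend nontrivially on the three velocity directions $\dot\varphi_1,\dot\varphi_2,\dot\theta$: the differentials $\dd c_1,\dd c_2$ already span the $\dd\dot\varphi_1,\dd\dot\varphi_2$ directions with nonvanishing coefficients $\sin^2\theta,\cos^2\theta$, while $\dd H_\varepsilon$ picks up the remaining $\dot\theta$ direction through the $\frac12\dot\theta^2$ term. Hence their differentials are linearly independent on this open dense set, which is all that Liouville--Arnold requires.

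The only genuine subtlety, and thus the step I expect to be the main obstacle, is the degeneracy locus $\theta\in\{0,\pi/2\}$ where the coordinate system $(\varphi_1,\varphi_2,\theta)$ breaks down and where the potential $V_\varepsilon$ blows up near $\gamma_\pm$. There one must either argue that integrability on a dense open set suffices for the intended application, or verify directly that the integrals extend smoothly and remain independent in suitable charts around the exceptional circles. Since the construction only ever uses the integrals on the interior of the billiard table $S^3\setminus\gamma_\pm$ (compare the role they play in the proof of Proposition \ref{prop potential}), restricting attention to this open set is harmless, and the corollary follows.
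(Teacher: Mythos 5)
Your proposal is correct and follows essentially the same route as the paper: the paper's ``proof'' is precisely the preceding discussion in Appendix~\ref{A}, where cyclicity of $\varphi_1,\varphi_2$ yields the conserved momenta $c_1,c_2$ of \eqref{iom}, and together with the conserved energy these give three independent integrals of motion. Your additional verifications of Poisson commutativity, functional independence on the dense open set $\{0<\theta<\pi/2\}$, and the harmlessness of the degeneracy locus along $\gamma_\pm$ only make explicit what the paper leaves implicit.
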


\section{Bound on periods}\label{B}
Let $(\gamma_\varepsilon, \dgc_\varepsilon)$ be a periodic orbit of $E+V_\varepsilon$. We consider the following cases.
$$
\gamma_\varepsilon: \ \ \begin{cases}\text{avoids both caps}, \\
       \text{enters a cap}:\ \ \begin{cases}\text{trapped}, \\
       \text{not trapped}:\ \ \begin{cases}\text{enters only one cap}, \\
       \text{enters both caps}.
    \end{cases}
    \end{cases}
    \end{cases}
$$
As we added the potential $V_\varepsilon$, the norm of the velocity $\vert v\vert$ is not a constant of motion. However, every orbit leaves the region where the potential is non-vanishing and by conservation of energy $E+V_\varepsilon$ the velocity $\vert v\vert$ is maximal there. From now on we denote $c:=\max\vert v\vert$ the maximal velocity.\\
\ \\
\noindent
\textbf{Avoids both caps:} If $\gamma_\varepsilon$ avoids both caps it is a magnetic geodesic and therefore satisfies the bound in Lemma \ref{Lower bound for magnetic geodesics on S3}.\\
\ \\
\noindent
\textbf{Trapped:} The periodic orbit is considered 'trapped' if it stays near the cap in the following sense
$$
d(\gamma(t), \gamma_\pm) \leq \sqrt{\varepsilon} \ \ \forall t.
$$
In coordinates, this condition implies that either $\theta \geq \frac{\pi}{2} - \sqrt{\varepsilon}$ or $\theta \leq \sqrt{\varepsilon}$. Both cases work analogously, so we only consider the case $\theta \leq \sqrt{\varepsilon}$. Let $\gamma_\varepsilon: \R/T\Z \to S^3$ be a $T$-periodic trapped orbit. Recall that the potential explodes near $\gamma_- = \lbrace \theta = 0 \rbrace$, hence $\theta$ cannot vanish along $\gamma_\varepsilon$, which implies that the winding in the fibers is well-defined. More precisely, the map
$$
(e^{i\varphi_1} \sin(\theta), e^{i\varphi_2} \cos(\theta)) \mapsto e^{i\frac{\varphi_1 + \varphi_2}{2}}
$$
is well-defined on $\lbrace 0 < \theta < \sqrt{\varepsilon} \rbrace$ and is a Riemannian submersion. Recall that $\delta_\varepsilon = \langle \dot\gamma_\varepsilon, i\gamma_\varepsilon \rangle$ is, by \eqref{iom}, a conserved quantity, given by
$
\delta_{\varepsilon} = c_1 + c_2 - \varepsilon,
$
from which we can conclude that
$$
T \geq \frac{2\pi}{|\delta_\varepsilon|} \geq \frac{2\pi}{c_\varepsilon},
$$
whenever $\delta_\varepsilon \neq 0$. Now assume $\delta_\varepsilon = 0$.

\vspace{4px}

Recall from \cite[Rmk. 1.13]{ABM} that if $\gc$ is a magnetic geodesic with strength $\varepsilon$, speed $c$, and enclosing an angle $\delta$ with the Reeb vector field $R$ on $S^3$, then the projection $\pi \circ \gc$ of $\gc$ onto $S^2\left( \frac{1}{2}\right)$ for the Hopf fibration is a closed curve of radius
\[
R_{c, \delta, \varepsilon} = \sqrt{\frac{(c-\delta)(c+\delta)}{\varepsilon^2 + 4(c^2 - \varepsilon \delta)}}.
\]
This result may be surprising, as one would expect that the radius of projections of magnetic geodesics only depends on the speed and the velocity.

\vspace{4px}

On one hand, we know that $\frac{\sin(\sqrt{\varepsilon})}{2} \geq R_{c_\varepsilon, \delta_\varepsilon, \varepsilon}$, as otherwise the orbit would not be trapped. Furthermore, we assume $\delta_\varepsilon = 0$, hence by approximating $\sin(\sqrt{\varepsilon}) \approx \sqrt{\varepsilon}$, we find
$$
\frac{\varepsilon}{4} \geq R_{c_\varepsilon, 0, \varepsilon}^2 \geq \frac{c_\varepsilon^2}{\varepsilon^2 + 4 c_\varepsilon^2} \ \Leftrightarrow \varepsilon^3 \geq 4(1-\varepsilon)c_\varepsilon^2.
$$
For small values of $\varepsilon$, this leads to the following inequality
\begin{equation}\label{bound c}
\sqrt{\varepsilon} > \frac{\varepsilon^{3/2}}{2} \geq c_\varepsilon.
\end{equation}
\noindent
Thus, we need to show the following, up to terms of order $\varepsilon$:
$$
T \geq \frac{2\pi}{\sqrt{\varepsilon}}.
$$
By choosing the potential $V_\varepsilon$ to increase extremely rapidly, once $\theta < \varepsilon$, we may assume that $\min \theta \geq \varepsilon - \varepsilon^k$ for $k$ sufficiently large. Note that such a choice of the potential implies that $\gamma_\varepsilon$ does not enter the region $\lbrace \theta < \varepsilon \rbrace$ up to a negligible error, and thus has length at least $2\pi \varepsilon$. Using \eqref{bound c}, we find $c \leq \varepsilon^{3/2}$, and we obtain
$$
T > \frac{2\pi \varepsilon}{\varepsilon^{3/2}} = \frac{2\pi}{\sqrt{\varepsilon}},
$$
as wished.

\ \\
\noindent
\textbf{Enters only one cap:} If $\gamma_\varepsilon$ enters only one cap and is not trapped, we know that the magnetic geodesic it follows outside the caps is of (euclidean) radius $R>\frac{1}{2}\sin\left(\frac{\sqrt{\varepsilon}+\varepsilon}{2}\right)$. Denote $d$ the (euclidean) distance, between the center of the cap and the center of the magnetic geodesic and $r=\frac{1}{2}\sin(\varepsilon)$ the radius of the cap. Now 
$$
\cos(\alpha_\varepsilon)=\frac{d-x}{R};\ \ x=\frac{r^2+d^2-R^2}{2d},
$$
thus using the estimates $R-r<d<R+r$ we find 
$$
\cos (\alpha_\varepsilon)=\frac{d^2-r^2+R^2}{2dR}\geq\frac{R^2+(R-r)^2-r^2}{2(R+r)R}=\frac{R-r}{R+r}.
$$
We may assume $\varepsilon$ to be small so that we can approximate $\sin(\varepsilon)=\varepsilon,\ \sin\left (\frac{\sqrt\varepsilon+\varepsilon}{2}\right)=\frac{\sqrt\varepsilon+\varepsilon}{2}$ to find
$$
\cos{\alpha_\varepsilon}\geq \frac{1+\sqrt{\varepsilon}}{1+3\sqrt{\varepsilon}}=1-\mathcal{O}(\varepsilon).
$$
It follows that 
$$
\alpha_\varepsilon=\mathcal{O}(\varepsilon).
$$
This means as $\varepsilon\to 0$ these orbits either approximate periodic bounce orbits of type (2) or have length tending to infinity. In particular
$$
T_{E+V_\varepsilon}=T_E-\mathcal{O}(\varepsilon).
$$

\noindent
\textbf{Enters both caps:} Periodic orbits that enter both caps either approximate orbits of type (3) or their length tends to infinity as $\varepsilon\to 0$. In particular 
$$
T_{E+V_\varepsilon}>T_E-\mathcal{O}(\varepsilon).
$$

\section{Cohomology of the quadric}\label{C}
The cohomology of $Q^3$ is well known to experts, but for the sake of completeness, we add its computation in this article.  The main tools are the Lefschetz hyperplane theorem and the Mayer-Vietoris sequence. 
    \begin{Lemma}\label{l2}
    The cohomology of the complex quadric $Q^3$ is given by
        $$
            H^*(Q^3,\Z)=\left\{\begin{array}{ll} \Z, & *=0,2,4,6 \\
0, & *=1,3,5 \end{array}\right.
        $$
        Further the generators $A\in H_2(Q^3,\Z)$ and $B\in H_4(Q^3,\Z)$ have intersection equal to 1, i.e.\ $A\cdot B=1$.
    \end{Lemma}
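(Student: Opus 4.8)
The plan is to determine the cohomology groups first and then the intersection number, splitting the work between the two tools mentioned above: the Lefschetz hyperplane theorem for the bottom half of the degrees, Poincar\'e duality to mirror the easy part to the top, and a Mayer--Vietoris argument adapted to the geometric splitting already present in the paper to pin down the middle.

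\emph{Step 1 (low degrees via Lefschetz).} Since $Q^3\subset\cp^4$ is a smooth hypersurface of complex dimension $3$, the Lefschetz hyperplane theorem gives that the inclusion induces isomorphisms $H^k(\cp^4,\Z)\to H^k(Q^3,\Z)$ for $k\le 2$, together with $\pi_1(Q^3)=\pi_1(\cp^4)=0$. This yields $H^0=\Z$, $H^1=0$, $H^2=\Z$ and simple connectivity, so in particular $H_1(Q^3,\Z)=0$. As $Q^3$ is a closed oriented $6$-manifold, Poincar\'e duality (together with the vanishing of $H_1$) gives $H^6=\Z$ and $H^5=0$, leaving only $H^3$ and the precise value of $H^4$, including any possible torsion, to be determined.

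\emph{Step 2 (middle and top via Mayer--Vietoris).} Here I would exploit the splitting used elsewhere in the paper: write $Q^3=U\cup V$, where $U$ is an open tubular neighbourhood of the hyperplane section $Q^2$ and $V=Q^3\setminus Q^2$. Then $U\simeq Q^2\cong\cp^1\times\cp^1$, while $V\cong D_1S^3\simeq S^3$ by the symplectomorphism $Q^3\setminus Q^2\cong D_1S^3$ recalled above, and the overlap $U\cap V$ retracts onto the link of $Q^2$, i.e.\ the unit normal circle bundle, which is the boundary $STS^3\cong S^3\times S^2$ (using that $S^3$ is parallelizable). Feeding $H^*(\cp^1\times\cp^1)$, $H^*(S^3)$ and $H^*(S^3\times S^2)$ into the Mayer--Vietoris sequence and computing the relevant restriction maps pins down the remaining groups, shows there is no torsion anywhere, and recovers $H^3=0$ and $H^4=\Z$. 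The main obstacle is identifying the two restriction maps into the overlap. The map $H^2(Q^2)\to H^2(U\cap V)$ is the pullback of the circle bundle and is governed by the normal Euler class; since $Q^2=Q^3\cap H$ is a hyperplane section, its normal bundle in $Q^3$ is $\mathcal{O}(1)|_{Q^2}=\mathcal{O}(1,1)$, so $e=a+b$ in $H^2(\cp^1\times\cp^1)=\Z a\oplus\Z b$, and the Gysin relation $\pi^*e=0$ shows this map has kernel $\Z(a+b)$ and is surjective, confirming $H^2(Q^3)=\Z$. The map $H^3(S^3)\to H^3(U\cap V)$ is the pullback along the $S^2$-bundle projection $STS^3\to S^3$, an isomorphism $\Z\to\Z$, and this is exactly what forces $H^3(Q^3)=0$ and $H^4(Q^3)=\Z$. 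Getting these two maps right, equivalently understanding the normal bundle of $Q^2$ and the geometry of the link $STS^3$, is the only genuinely nontrivial point.

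\emph{Intersection number.} Finally I would argue geometrically. The generator $A\in H_2(Q^3,\Z)$ is the class of a projective line lying on $Q^3$, and $B=[Q^2]\in H_4(Q^3,\Z)$ is the class of the hyperplane section, which generates $H_4$ because $\mathrm{PD}(B)$ equals the hyperplane class generating $H^2$. A generic line on $Q^3$ meets a generic hyperplane section transversally in a single point, whence $A\cdot B=1$. As a consistency check, since all cohomology is torsion-free, Poincar\'e duality makes the pairing $H_2\times H_4\to\Z$ unimodular, so with $H_2\cong H_4\cong\Z$ the value is necessarily $\pm1$, and the geometric count fixes both the magnitude and the sign.
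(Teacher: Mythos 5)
Your proposal is correct and follows essentially the same route as the paper: the Lefschetz hyperplane theorem (plus duality) settles all degrees except the middle, and the middle degree is pinned down by a Mayer--Vietoris argument for the very same decomposition $Q^3\cong DS^3\cup NQ^2$ with overlap $S^3\times S^2$. If anything, you supply details the paper leaves implicit---the identification of the two restriction maps via the normal Euler class $a+b$ and the Gysin sequence, and a direct geometric count giving $A\cdot B=1$ in place of the citation to \cite[Cor.\ 1.25]{V03}---and these details check out.
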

    \begin{proof}
       Using the direct corollaries \cite[Cor.\ 1.24 \& 1.25]{V03} of the Lefschetz hyperplane theorem for hypersurfaces, we immediately find $H^*(\cp^3,\Z)\cong H^*(Q^3,\Z)$ for $*\neq 3$. To find $H^3(Q^3,\Z)\cong H_3(Q^3,\Z)$ one can for example apply the Mayer--Vietoris sequences to the decomposition 
       $
        Q^3\cong DS^3\cup NQ^2,
       $
       where $NQ^2$ denotes the disk normal bundle of $Q^2$. Observe that $DS^3\simeq S^3$, $NQ^2\simeq Q^2\simeq S^2\times S^2$ and $DS^3\cap NQ^2\simeq S^3\times S^2$ as the tangent bundle of $S^3$ is trivializable. therefore all terms but $H_3(Q^3,\Z)$ are known. The statement about the intersection again follows directly from \cite[Cor.\ 1.25]{V03}.
    \end{proof}

\begin{figure}[h]
	\centering
 \includegraphics[width=0.5\textwidth]{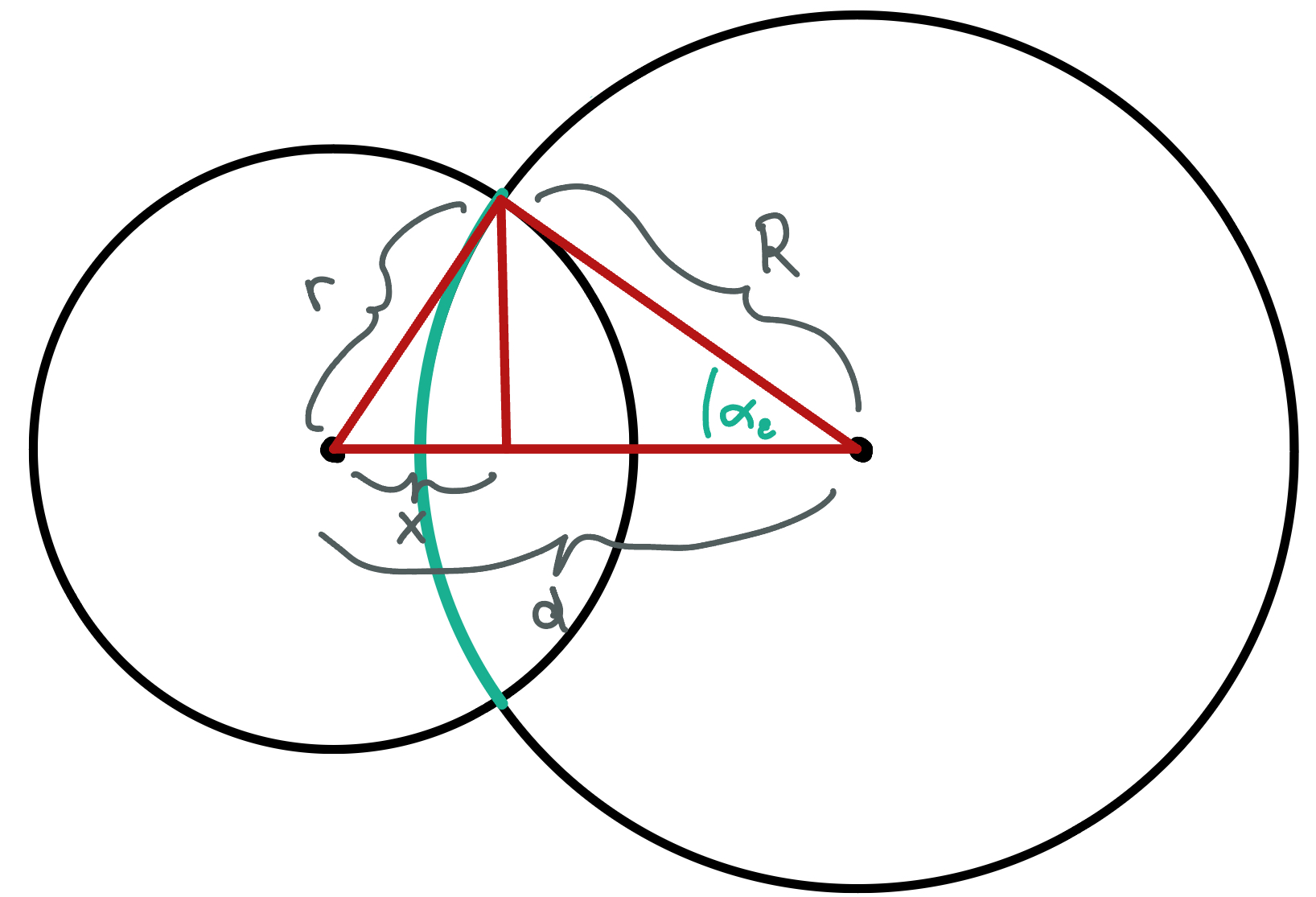}
	\caption{\textit{The circle of radius $r$ represents the cap. The circle of radius $R$ the magnetic geodesic. We need to show that the angle $\alpha_\varepsilon$ tends to zero, as this also means that most of the time the trajectory stays outside the cap where the potential does not vanish.}}
    \label{angle}
\end{figure}
\newpage
\bibliographystyle{abbrv}
\bibliography{ref}

\end{document}